\DeclareMathOperator*\argmin{argmin}
\begin{document}

\pagenumbering{arabic}
\newcounter{comp1}

\newtheorem{definition}{Definition}[section]
\newtheorem{proposition}{Proposition}[section]
\newtheorem{example}{Example}[section]
\newtheorem{method}{Method}[section]
\newtheorem{lemma}{Lemma}[section]
\newtheorem{theorem}{Theorem}[section]
\newtheorem{corollary}{Corollary}[section]
\newtheorem{application}{Application}[section]
\newtheorem{assumption}{Assumption}
\newtheorem{algorithm}{Algorithm}[section]
\newtheorem{remark}{Remark}[section]
\newcommand{\fig}[1]{\begin{figure}[hbt]
                  \vspace{1cm}
                  \begin{center}
                  \begin{picture}(15,10)(0,0)
                  \put(0,0){\line(1,0){15}}
                  \put(0,0){\line(0,1){10}}
                  \put(15,0){\line(0,1){10}}
                  \put(0,10){\line(1,0){15}}
                  \end{picture}
                  \end{center}
                  \vspace{.3cm}
                  \caption{#1}
                  \vspace{.5cm}
                  \end{figure}}
\newcommand{\Axk}{A(x^k)}
\newcommand{\Aumb}{\sum_{i=1}^{N}A_{i}u_{i}-b}
\newcommand{\Kk}{K^k}
\newcommand{\Kki}{K_{i}^{k}}
\newcommand{\Aukmb}{\sum_{i=1}^{N}A_{i}u_{i}^{k}-b}
\newcommand{\Au}{\sum_{i=1}^{N}A_{i}u_{i}}
\newcommand{\Aukpmb}{\sum_{i=1}^{N}A_{i}u_{i}^{k+1}-b}
\newcommand{\nab}{\nabla^2 f(x^k)}
\newcommand{\xk}{x^k}
\newcommand{\ubk}{\overline{u}^k}
\newcommand{\uhk}{\hat u^k}
\newcommand{\B}{{\mathbb{B}}}
\newcommand{\N}{{\mathbb{N}}}
\newcommand{\R}{{\mathbb{R}}}
\newcommand{\Sph}{{\mathbb{S}}}
\def\QEDclosed{\mbox{\rule[0pt]{1.3ex}{1.3ex}}} 
\def\QEDopen{{\setlength{\fboxsep}{0pt}\setlength{\fboxrule}{0.2pt}\fbox{\rule[0pt]{0pt}{1.3ex}\rule[0pt]{1.3ex}{0pt}}}}
\def\QED{\QEDopen} 
\def\proof{\par\noindent{\em Proof.}}
\def\endproof{\hfill $\Box$ \vskip 0.4cm}
\newcommand{\RR}{\mathbf R}

\title {\bf
Level-set Subdifferential Error Bounds and Linear Convergence of Variable Bregman Proximal Gradient Method}
\author{Daoli Zhu\thanks {Antai College of Economics and Management and Sino-US Global Logistics
Institute, Shanghai Jiao Tong University, Shanghai, China({\tt
dlzhu@sjtu.edu.cn})}, Sien Deng\thanks {Department of Mathematical Sciences, Northern Illinois University, DeKalb, IL, USA({\tt sdeng@niu.edu})}, Minghua Li\thanks {School of Mathematics and Big Data, Chongqing University of Arts and Sciences, Yongchuan, Chongqing, China({\tt minghuali20021848@163.com})}, Lei Zhao\thanks {School of Naval Architecture, Ocean and Civil Engineering, Shanghai
Jiao Tong University, 200030 Shanghai, China({\tt l.zhao@sjtu.edu.cn})}}

\maketitle

\begin{abstract}
In this work,
we develop a level-set subdifferential error bound condition aiming towards convergence rate analysis of a variable Bregman proximal gradient (VBPG) method for a broad class of nonsmooth and nonconvex optimization problems.
It is proved that the aforementioned condition guarantees linear convergence of VBPG, and is weaker than Kurdyka-{\L}ojasiewicz property, weak metric subregularity and Bregman proximal error bound. Along the way, we are able to derive a number of  verifiable conditions for level-set subdifferential  error bounds to hold, and  necessary conditions and sufficient conditions for
linear convergence relative to a level set for nonsmooth and nonconvex optimization problems.
The newly established  results not only enable us to show that any accumulation point of the sequence generated by VBPG is at least a critical point of the limiting subdifferential or even a critical point of the proximal subdifferential with a fixed Bregman function in each iteration, but  also provide a fresh perspective that allows us to explore   inner-connections among many known sufficient conditions for linear convergence of various first-order methods.

\vspace{0.3cm}
\noindent {\bf Keywords:} Level-set subdifferential error bound, Variable Bregman proximal gradient method, Linear convergence, Bregman proximal error bound, Metric subregularity, Weak metric-subregularity, Linear convergence relative to a level set

%

\end{abstract}
\normalsize
\vspace{1cm}
\section{Introduction}\label{intro}
This paper studies the following nonconvex and nonsmooth optimization problem:
\begin{equation}\label{Prob:general-function}
\mbox{{\rm(P)}}\qquad\min_{x\in\RR^n}\qquad F(x)=f(x)+g(x)
\end{equation}
where $f:\RR^n\rightarrow(-\infty,\infty]$ is a proper lower semi-continuous (l.s.c) function that is smooth in $\mathbf{dom}f$,  and $g:~\RR^n\rightarrow(-\infty, \infty]$ is a proper l.s.c  function. We say that (P) is a convex problem (a fully nonconvex problem) if
both $f$ and $g$ are convex (both $f$ and $g$ are nonconvex).

Problem (P) arises naturally  in diverse areas such as compressed sensing~\cite{CandesTao05,Donoho06}, machine learning and statistics~\cite{Tibshirani1996}, principal component analysis~\cite{Aybat2018} and principal component pursuit~\cite{Aybat2014}. Typically these problems are of large scale. As the number of decision variables is huge, first-order methods and their enhanced versions are viewed to be a practical way to solve (P) ~\cite{LionsMercier1979,Cohen80,Nesterov13}.

By incorporating a Newton-like approach, we propose to solve (P) by a  variable Bregman proximal gradient (VBPG) method first introduced in \cite{Cohen80} with the name of Auxiliary Problem Principal (APP) method. An iteration of the  method   takes the form:
\begin{equation}\label{eq:sub-problem}
\mbox{{\rm (AP$_k$)}}\qquad x^{k+1}\in\argmin_{x\in\RR^n}\bigg{\{}\langle\nabla f(x^k),x-x^k\rangle+g(x)+\frac{1}{\epsilon^k}D^k(x^k,x)\bigg{\}},
\end{equation}
where $D^k$ is a variable Bregman distance  (see Section 2.1 for the definition of a Bregman distance). A nonsmooth version is investigated in \cite{CohenZ}. The classical proximal gradient (PG) method  is $D^k(x,y)=\frac{1}{2}\|x-y\|^2$.
 The second-order information through $D^k$ can be used to enhance the rate of  convergence  of the method \cite{Bonettini2016,Chouzenous2014}.
Some other choices of $D^k$ can be found in \cite{Banjac18}. Moreover, the VBPG method can  be combined with extrapolation, proximal alternating linearization and line search process \cite{ZhuMarcotte95}. VBPG can be also viewed as a forward-backward splitting method to find a critical point of (P): $x^{k+1}=(\Gamma^k+A)^{-1}(\Gamma^k-B)x^k$ with $A=\nabla f(x)$, $B=\partial_Pg(x)$, $\Gamma^k=\nabla K^k(x)/\epsilon^k$ (for notation of $K^k(x)$, $\epsilon^k$ and $\partial_Pg(x)$, see Section~\ref{sec:pre} for details).

Theory of error bounds (EB) has long been known playing an important role in
optimization theory~\cite{Rockafellar,Mordukhovich}, and a central role in  convergence analysis and convergence rate analysis  of various iterative  methods~\cite{Pan97}.
As we are interested in finding an optimal solution, or a critical point, or an optimal value for (P),  it is  natural to look at the following  types of   error bounds: the first type EB is an inequality that bounds the {\em distance from a set of test points to
a target set} (e.g., critical-point set of (P), optimal solution set of (P), or a level set of $F$) by
a {\em residual function}; while the second type  EB is an inequality that bounds certain {\em absolute values of the difference between function  $F$ values at a set of test points and  a target value} (e.g., a critical value of $F$, or the optimal value of (P)) by a {\em residual function}.
Prominent examples of first type error bounds include
  \cite{Hoffman1952,Cro78,BuF93,BuD02,Rob81}.
Pioneering  contributions to second type error bounds include~\cite{Pol63} and {\L}ojasiewicz inequality~\cite{Loj63}.


 When (P) is a convex problem, PG methods
exhibit  sublinear convergence rates~\cite{Bonettini2016,Nesterov13} and
achieve a  linear convergence rate ~\cite{Cohen17} if   $f$ is strongly convex.
Without strong convexity,~\cite{Necoara2018} examines
sufficient conditions for linear convergence of PG and acceleration techniques.

 Recently there is  a surge of interest in developing some first type error bound (EB) conditions that guarantee linear convergence for PG methods \cite{Lewis2018,LuoTseng92}, and in applying a generalized second type EB is ( Kurdyka-{\L}ojasiewicz (K{\L}) property)  to  obtain linear convergence of PG methods as well as a variety of other optimization methods~\cite{Attouch13,Chouzenous2014,Frankel2015,Bolte2010,LiPong18}. \cite{Schmidt2016} proposes a proximal-PL inequality that leads to an elegant linear convergence rate analysis for sequences of function-values generated by the PG method. We remark that the proximal-PL inequality condition combines and extends an idea originated from  metric functions for variational inequalities (VI) by reformulating a VI as a constrained continuous differentiable optimization problem through certain gap functions, see~\cite{Zhu94}.
In addition,
there are two major lines of research on  error bound conditions to achieve linear convergence guarantee for gradient descent methods. The first line of research is to find  connections among existing error bound conditions.
Examples of such work include \cite{Schmidt2016,Lewis2018,Zhang2019}.
Another  line of  research is the study of (P) when (P) is fully nonconvex. A sample of such work can be
found in \cite{Lewis2018,Ye18}.

Motivated  by the aforementioned works for a quest for linear convergence of PG methods, we are led to ask the following basic question:
{\it What are fundamental properties associated with
$F$ itself so that linear  convergence  of  VBPG is guaranteed?} This question leads us to look into  error bounds involving  level sets, subdifferentials  and  various  level-set  error bounds. A significant departure of our work to the above cited works is the use of  level sets as target sets 
 to establish error bound conditions whereas the above cited works  typically use optimal solution sets or sets of critical points (in the  nonconvex case) as target sets to establish error bound conditions.
In this work,
we have discovered a number of interesting  results  on level sets of
$F$,  revealed the roles of  level-set based  error bounds in achieving   linear convergence of VBPG, and uncovered  interconnections among level-set based  error bounds and other known error bounds  in  the literature.

This is a simplified version of the manuscript entitled ``An variational approach on level sets
and linear convergence of variable Bregman proximal gradient method for nonconvex optimization problems"\cite{ZhuDeng19}.
By introducing and examing
 a level-set sudifferential error bound condition carefully,  we are able to derive  linear convergence of VBPG under this condition.  
Some interesting features of this condition are as follows:
\begin{itemize}
\item[(i)] In the fully nonconvex setting (i.e., both $f$ and $g$ are nonconvex), this condition  is sufficient  for $Q-$linear convergence of $\{F(x^k)\}$ and $R-$linear convergence of $\{x^k\}$ generated by VBPG. Moreover, all known sufficient conditions  for linear convergence
of PG methods  imply this condition;

\item[(ii)] The level-set subdifferential EB condition along with  associated theorems  provides a unique perspective that allows us to make  connections with many known conditions in the literature which are shown to guarantee the linear convergence of PG.
\end{itemize}

In addition to the above contributions, we also provide necessary conditions and sufficient conditions for linear convergence with respect to level sets for VBPG. By examples of Subsection~\ref{subsec:examples}, we have shown that the notion of the level-set subdifferential EB condition is weaker than that of the K{\L} property, that of  weak metric subregularity, and that of Bregman proximal error bound. To our knowledge, this  is the first comprehensive work on  convergence rate analysis of VBPG. Moreover, a number of new results obtained in this work for VBPG are also new results even for PG methods.

The rest of this paper is structured as follows. Section~\ref{sec:pre} provides notation and preliminaries. Section~\ref{sec:convergence} presents the results on  convergence and linear convergence analysis for VBPG. Section~\ref{sec:LSEB} introduces level-set analysis,  studies level-set type error bounds, and provides the necessary and sufficient condition of linear convergence of VBPG under level-set based error bounds. Section \ref{sec:LSEB relationships} investigates connections of various level-set  error bounds established in this work with  existing error bounds.
Section~\ref{sec:LSEB_sufficiential} lists known sufficient conditions to guarantee the existence of level-set subdifferential error bounds. Finally,  we supply  Figure 1 in Section~\ref{sec:LSEB relationships} and Figure 2 in Section~\ref{sec:LSEB_sufficiential} to aid the reader to see easily inner relationships of these conditions and results.
\section{Notations and preliminaries}\label{sec:pre}
Throughout this paper, $\langle\cdot,\cdot\rangle$ and $\|\cdot\|$ denote the Euclidean scalar product of $\RR^n$ and its corresponding norm respectively. Let $\mathbf{C}$ be a subset of $\RR^n$ and $x$ be any point in $\RR^n$. Define
$$dist(x,\mathbf{C})=\inf\{\|x-z\|:z\in\mathbf{C}\}.$$
When $\mathbf{C}=\emptyset$, we set $dist(x,\mathbf{C})=\infty$.

The definitions we will use throughout the paper  are  standard in variational analysis (\cite{Rockafellar} and \cite{Mordukhovich}).
\begin{definition}[\cite{Rockafellar}]
Let $\psi$: $\RR^n\rightarrow\RR\cup\{+\infty\}$ be a proper lsc function.
\begin{itemize}
\item[{\rm(i)}] For each $\bar{x}\in\mathbf{dom}~\psi$, the Fr\'echet subdifferential of $\psi$ at $\bar{x}$, written $\partial_{F}\psi(\bar{x})$, is the set of vectors $\xi\in\RR^n$, which satisfy
    $$\liminf_{\substack{x\neq\bar{x}\\ x\rightarrow\bar{x}}}\frac{1}{\|x-\bar{x}\|}[\psi(x)-\psi(\bar{x})-\langle\xi,x-\bar{x}\rangle]\geq0.$$
If $x\notin\mathbf{dom} \psi$, then $\partial_F\psi(x)=\emptyset$.
\item[{\rm(ii)}] The limiting-subdifferential (\cite{Mordukhovich}), or simply the subdifferential for short, of $\psi$ at $\bar{x}\in\mathbf{dom}~\psi$, written $\partial_L\psi(\bar{x})$, is defined as follows:
    $$\partial_L\psi(\bar{x}):=\{\xi\in\RR^n:\exists x_n\rightarrow\bar{x}, \psi(x_n)\rightarrow \psi(\bar{x}), \xi_n\in\partial_F\psi(x_n)\rightarrow\xi\}.$$
\item[{\rm(iii)}] The proximal subdifferential of $\psi$ at $\bar{x}\in\mathbf{dom}\psi$ written $\partial_P\psi(\bar{x})$, is defined as follows:
    $$\partial_P\psi(\bar{x}):=\{\xi\in\RR^n:\exists\rho>0, \eta>0\; \mbox{s.t.}\; \psi(x)\geq\psi(\bar{x})+\langle\xi,x-\bar{x}\rangle-\rho\|x-\bar{x}\|^2 \forall x\in\mathbb{B}(\bar{x};\eta)\},$$
    where $\mathbb{B}(\bar{x}; \eta)$ is the open ball of radius $\eta>0$, centered at $\bar{x}$.
\end{itemize}
\end{definition}
\begin{definition}[\cite {Bernard05,Bolte2010,Rockafellar}]
Let $\psi:\RR^n\rightarrow(-\infty,\infty]$ be a proper lsc function.
\begin{itemize}
\item[{\rm(i)}]  (Definition~13.27 of \cite{Rockafellar}) A lsc function $\psi$ is said to be prox-regular at $\bar{x}\in \mathbf{dom}~\psi$ for subgradient $\bar{\nu}\in\partial_L\psi(\bar{x})$, if there exist parameters $\eta>0$ and $\rho\geq0$ such that for every point $(x,\nu)\in gph\partial_L\psi$ obeying $\|x-\bar{x}\|<\eta$, $|\psi(x)-\psi(\bar{x})|<\eta$ and $\|\nu-\bar{\nu}\|<\eta$ and $\nu\in\partial_L\psi(x)$, one has
    \[
    \psi(x')\geq\psi(x)+\langle\nu,x'-x\rangle-\frac{\rho}{2}\|x'-x\|^2\;\forall x'\in\mathbb{B}(\bar{x};\eta).
    \]
\item[{\rm(ii)}] (Proposition~3.3 of \cite{Bernard05})~ A lsc function $\psi$ is said to be uniformly prox-regular around $\bar{x}\in \mathbf{dom}~\psi$ , if there exist parameters $\eta>0$ and $\rho\geq0$ such that for every point $x, x'\in \mathbb{B}(\bar{x};\eta)$ and $\nu\in\partial_L\psi(x)$, one has
    \[
    \psi(x')\geq\psi(x)+\langle\nu,x'-x\rangle-\frac{\rho}{2}\|x'-x\|^2.
    \]
\item[{\rm(iii)}] (Definition~10 of \cite{Bolte2010})~A lsc function $\psi$ is semi-convex on $\mathbf{dom}~\psi$ with modulus $\rho>0$ if there exists a convex function $h:\RR^n\rightarrow\RR$ such that $\psi=h(x)-\frac{\rho}{2}\|x\|^2$.
\end{itemize}
\end{definition}
The following inclusions always hold: $\partial_P\psi(x)\subset\partial_{F}\psi(x)\subset\partial_L\psi(x)$.
If $\psi$ is uniformly prox-regular around $\bar{x}$ on $\mathbb{B}(\bar{x}; \eta)$ with $\eta>0$, we have $\partial_P\psi(x)=\partial_L\psi(x)$ for all $x\in\mathbb{B}(\overline{x}; \eta)$. In particular, $\partial_P\psi(x)=\partial_L\psi(x)$
 if $\psi$ is a semi-convex (convex) function.

Throughout the rest of  this paper, we make the following assumption on $f$ and $g$.
\begin{assumption}\label{assump1}
\begin{itemize}
\item[{\rm(i)}] $f:\mathbf{dom}\rightarrow (-\infty,\infty]$ is a differentiable function with $\mathbf{dom}~f$ convex and with gradient $L$-Lipschitz continuous.
\item[{\rm(ii)}] $g$ is proper lower semicontinuous on $\mathbf{dom}~g$, and $\mathbf{dom}~g$ is a convex set.
\item[{\rm(iii)}] $F$ is level-bounded i.e., the set $\{x\in\RR^n : F(x)\leq r\}$ is bounded (possibly empty) for every $r\in\RR$.
\end{itemize}
\end{assumption}
A few remarks about Assumption 1 are in order.
By Theorem 3.2.12 of \cite{Ortega}, the following descent property of $f$  holds
\begin{equation*}
\frac{L}{2}\|y-x\|^2+\langle\nabla f(x),y-x\rangle\geq f(y)-f(x)\;\forall x,y\in \mathbf{dom}f.
\end{equation*}
From {\rm{(i)} and {\rm{(ii)}}, $\mathbf{dom}~F$ is a convex set. As a consequence of {\rm{(iii)},  the optimal value $F^*$ of (P) is finite and the optimal solution set $\mathbf{X}^*$ of (P) is non-empty.

A vector $x$ satisfying $0\in\partial_P F(x)$ is called a proximal critical point. The set of all proximal critical points of $F$ is denoted by $\bar{\mathbf{X}}_P$. By Assumption~\ref{assump1}, $\bar{\mathbf{X}}_P\neq\emptyset$. The limiting critical point is defined as:
$$\bar{\mathbf{X}}_L:=\{x : 0\in\nabla f(x)+\partial_L g(x)\}.$$
In general $\bar{\mathbf{X}}_P\subseteq\bar{\mathbf{X}}_L$ and the equality holds  if $\partial_Pg(x)=\partial_Lg(x)$.
By Proposition 2.3 of \cite{Ye18}, $\partial_P F(x)=\nabla f(x)+\partial_P g(x)$.
\subsection{Variable Bregman distance, Bregman type mappings and functions}\label{sec:Variable Bregman}
Let a sequence of functions $\{K^k,k\in\mathbb{N}\}$ and positive numbers $\{\epsilon^k,k\in\mathbb{N}\}$ be given, where the function $K^k$  is strongly convex and differentiable with Lipschitz gradient. For each $k$, define a variable Bregman distance
\begin{equation}
D^k(x,y)=K^k(y)-[K^k(x)+\langle\nabla K^k(x),y-x\rangle].
\end{equation}
 The variable Bregman distance $D^k$ measures the proximity between two points $(x,y)$; that is,
$D^k(x,y)\geq0$ and  $D^k(x,y)=0$ if and only if  $x=y$.
We  make the following standing assumption on the functions $K^k(x)$.
\begin{assumption}\label{assump2}
\begin{itemize}
\item[{\rm(i)}] For each $k$, $K^k$ is strongly convex with uniformly modulus $m$ and with its gradient $\nabla K^k$ being uniformly $M$-Lipschitz.
\item[{\rm(ii)}] The parameter $\epsilon^k$ satisfies: $0<\underline{\epsilon}\leq\epsilon^k\leq\overline{\epsilon}$.
\end{itemize}
\end{assumption}
Under Assumption~2, $\{D^k~|~k\in N\}$ uniformly satisfies:
\begin{eqnarray*}
&&m\|x-y\|^2\leq\langle\nabla_xD^k(x,y),x-y\rangle\leq M\|x-y\|^2,\\
&&m\|x-y\|^2\leq\langle\nabla_yD^k(x,y),y-x\rangle\leq M\|x-y\|^2,\\
&&\frac{m}{2}\|x-y\|^2\leq D^k(x,y)\leq\frac{M}{2}\|x-y\|^2.
\end{eqnarray*}
 To simplify our analysis, in what follows, we will drop the sub-index $k$. Thanks to Assumption~2, the results we
will establish
hold for all $k$. To this end,
let  a strongly twice differentiable convex function $K$  along with a positive $\epsilon\in
 (\underline{\epsilon}, \overline{\epsilon})$ be given.
Suppose a Bregman distance $D$ is constructed based on $K$. We introduce
 the following Bregman type mappings and functions which will play a key role for the convergence analysis of the VBPG method.\\
{\bf Bregman Proximal Envelope Function} $E_{D,\epsilon}$ is defined by
\begin{equation}
E_{D,\epsilon}(x)=\min_{y\in\RR^n}\{f(x)+\langle\nabla f(x),y-x\rangle+g(y)+\frac{1}{\epsilon}D(x,y)\}\;\forall x\in\RR^n,
\end{equation}
which is expressed as the value function of optimization problem (AP$^k$) (see~\eqref{eq:sub-problem}), where $x^k$ is replaced by $x$.\\
{\bf Bregman Proximal Mapping} $T_{D,\epsilon}$ is defined by
\begin{equation}\label{defi:Tk}
T_{D,\epsilon}(x)=\argmin_{y\in\RR^n}\langle\nabla f(x),y-x\rangle+g(y)+\frac{1}{\epsilon}D(x,y)\;\forall x\in\RR^n,
\end{equation}
which can be viewed as the set of  optimizers of optimization problem (AP$^k$). By Assumptions~\ref{assump1} and~\ref{assump2}, $T_{D,\epsilon}(x)$ is non-empty, the mapping $T_{D,\epsilon}(x)$ could be multi-valued.\\
{\bf Bregman proximal gap function} $G_{D,\epsilon}$ is defined by
\begin{eqnarray}
G_{D,\epsilon}(x)=-\frac{1}{\epsilon}\min_{y\in\RR^n}\{\langle\nabla f(x),y-x\rangle+g(y)-g(x)+\frac{1}{\epsilon}D(x,y)\}\;\forall x\in\RR^n.
\end{eqnarray}
Obviously, we have $G_{D,\epsilon}(x)\geq0$ for all $x$. If $g$ is semi-convex, the following optimization problem is equivalent to the differential inclusion problem $0\in\partial_P F(x)$ associated with problem (P) (see Proposition~\ref{prop:Gk})
\[
\min_{x\in\RR^n}G_{D,\epsilon}(x).
\]
The
above mappings and functions enjoy some favorable properties summarized in the following:
\begin{proposition}{\bf(Global properties of Bregman type mappings and functions)}\label{prop:Ek} Let a  Bregman function $D$ be given.
Suppose that Assumptions~\ref{assump1}  and \ref{assump2} hold, and that $\epsilon\in (0,m/L)$. Then for any $x\in\RR^n$, $t_{D,\epsilon}(x)\in T_{D,\epsilon}(x)$, we have
{\rm
\begin{itemize}
\item[(i)] $E_{D,\epsilon}(x)=F(x)-\epsilon G_{D,\epsilon}(x)$;
\item[(ii)] $F\big{(}t_{D,\epsilon}(x)\big{)}\leq E_{D,\epsilon}(x)-a\|x-t_{D,\epsilon}(x)\|^2$ with $a=\frac{1}{2}(\frac{m}{\epsilon}-L)$;
\item[(iii)] $F\big{(}t_{D,\epsilon}(x)\big{)}\leq F(x)-a\|x-t_{D,\epsilon}(x)\|^2.$
\end{itemize}
}
\end{proposition}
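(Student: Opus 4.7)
The plan is to establish the three identities/inequalities in turn, where (i) is a direct algebraic manipulation of the two value-function definitions, (ii) evaluates the Bregman proximal envelope at a minimizer and then applies the descent lemma for $f$ together with the lower-bound on $D$ coming from strong convexity of $K$, and (iii) is a one-line consequence of (ii) once we notice that $E_{D,\epsilon}(x) \leq F(x)$ by testing the feasible point $y=x$ inside the minimization.

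For (i), I would expand the definition of $G_{D,\epsilon}(x)$ and pull the constant $g(x)$ out of the minimum. This shows
\[
-\epsilon G_{D,\epsilon}(x) \;=\; \min_{y}\bigl\{\langle\nabla f(x),y-x\rangle+g(y)+\tfrac{1}{\epsilon}D(x,y)\bigr\} - g(x).
\]
Adding $f(x)+g(x)=F(x)$ to both sides and recognizing that the first term on the right becomes $E_{D,\epsilon}(x)-f(x)$, the identity $E_{D,\epsilon}(x)=F(x)-\epsilon G_{D,\epsilon}(x)$ follows at once.

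For (ii), because $t=t_{D,\epsilon}(x)$ is a minimizer in the definition of $E_{D,\epsilon}$, we have the exact equality
\[
E_{D,\epsilon}(x)=f(x)+\langle\nabla f(x),t-x\rangle+g(t)+\tfrac{1}{\epsilon}D(x,t).
\]
I then invoke the descent property for $f$ displayed right after Assumption~1, namely $f(t)\leq f(x)+\langle\nabla f(x),t-x\rangle+\tfrac{L}{2}\|t-x\|^2$, to upper-bound $f(x)+\langle\nabla f(x),t-x\rangle$ by $f(t)-\tfrac{L}{2}\|t-x\|^2$. Substituting this into the expression for $E_{D,\epsilon}(x)$ and using the lower bound $D(x,t)\geq \tfrac{m}{2}\|x-t\|^2$ (which is exactly the third displayed inequality in Assumption~2), I obtain
\[
F(t)=f(t)+g(t)\;\leq\; E_{D,\epsilon}(x)-\tfrac{1}{\epsilon}D(x,t)+\tfrac{L}{2}\|x-t\|^2\;\leq\;E_{D,\epsilon}(x)-\tfrac{1}{2}\bigl(\tfrac{m}{\epsilon}-L\bigr)\|x-t\|^2,
\]
which is precisely (ii); the hypothesis $\epsilon<m/L$ guarantees $a>0$.

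For (iii), observe that plugging $y=x$ into the minimization defining $E_{D,\epsilon}$ yields the candidate value $f(x)+0+g(x)+\tfrac{1}{\epsilon}D(x,x)=F(x)$ (since $D(x,x)=0$), so $E_{D,\epsilon}(x)\leq F(x)$. Chaining this with (ii) finishes the proof. There is no real obstacle here: the only subtlety worth flagging is keeping the strong-convexity modulus $m$ and the Lipschitz constant $L$ on the correct sides of the inequalities so that the combination produces the constant $a=\tfrac{1}{2}(m/\epsilon-L)$ rather than its negative, and this is secured by the range condition $\epsilon\in(0,m/L)$.
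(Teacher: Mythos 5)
Your proposal is correct and follows essentially the same route as the paper's proof: part (i) by unpacking the two definitions, part (ii) by evaluating the envelope at the minimizer, applying the descent inequality for $f$ and the bound $D(x,t)\geq\frac{m}{2}\|x-t\|^2$, and part (iii) by combining (ii) with $E_{D,\epsilon}(x)\leq F(x)$ (which you obtain by testing $y=x$, while the paper cites $G_{D,\epsilon}\geq 0$ together with (i) --- the same observation). No gaps.
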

\begin{proof}
{\rm (i):} This follows immediately from the definitions $G_{D,\epsilon}(x)$ and $E_{D,\epsilon}(x)$.\\
{\rm (ii) \& (iii):} Since $\nabla f$ is $L$-Lipschitz, one has
\begin{eqnarray}
E_{D,\epsilon}(x)&=&f(x)+\langle\nabla f(x),t_{D,\epsilon}(x)-x\rangle+g\big{(}t_{D,\epsilon}(x)\big{)}+\frac{1}{\epsilon}D\big{(}x,t_{D,\epsilon}(x)\big{)}\nonumber\\
      &\geq&f\big{(}t_{D,\epsilon}(x)\big{)}-\frac{L}{2}\|x-t_{D,\epsilon}(x)\|^2+g\big{(}t_{D,\epsilon}(x)\big{)}+\frac{1}{\epsilon}D\big{(}x,t_{D,\epsilon}(x)\big{)}.\nonumber
\end{eqnarray}
Thus, by (i) and the fact $D\big{(}x,t_{D,\epsilon}(x)\big{)}\geq\frac{m}{2}\|x-t_{D,\epsilon}(x)\|^2$, we get
\begin{eqnarray*}
F\big{(}t_{D,\epsilon}(x)\big{)}&\leq& E_{D,\epsilon}(x)-\frac{1}{\epsilon}D\big{(}x,t_{D,\epsilon}(x)\big{)}+\frac{L}{2}\|x-t_{D,\epsilon}(x)\|^2\nonumber\\
                     &\leq& F(x)-\frac{1}{2}(\frac{m}{\epsilon}-L)\|x-t_{D,\epsilon}(x)\|^2.\nonumber
\end{eqnarray*}
This completes the proof. \end{proof}

\begin{proposition}\label{prop:Fk}{\bf (Properties of $\partial_P F$)}
Suppose that Assumptions~\ref{assump1} and \ref{assump2} hold. Then for all $t_{D,\epsilon}(x)\in T_{D,\epsilon}(x)$ we have
\begin{itemize}
\item[{\rm(i)}] $dist\bigg{(}0,\partial_PF\big{(}t_{D,\epsilon}(x)\big{)}\bigg{)}\leq(L+\frac{M}{\underline{\epsilon}})\|x-t_{D,\epsilon}(x)\|$;
\item[{\rm(ii)}] If $x\in T_{D,\epsilon}(x)$, then $0\in\partial_P F(x)$.
\end{itemize}
\end{proposition}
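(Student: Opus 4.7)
The plan is to extract from the minimization defining $T_{D,\epsilon}(x)$ an explicit element of $\partial_P F\bigl(t_{D,\epsilon}(x)\bigr)$ whose norm is controlled by $\|x-t_{D,\epsilon}(x)\|$, and then specialize to obtain (ii).

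First I would write the first-order necessary condition at $t := t_{D,\epsilon}(x)$. Since $t$ minimizes $\phi(y) := \langle \nabla f(x), y-x\rangle + g(y) + \tfrac{1}{\epsilon} D(x,y)$ and the only nonsmooth term is $g$ (the remaining terms form a $C^{1,1}$ function of $y$ with $x$ fixed), the proximal sum rule gives $0 \in \nabla f(x) + \tfrac{1}{\epsilon}\nabla_y D(x,t) + \partial_P g(t)$. Using $D(x,y)=K(y)-K(x)-\langle\nabla K(x),y-x\rangle$, one has $\nabla_y D(x,t)=\nabla K(t)-\nabla K(x)$, hence
\[
-\nabla f(x) - \tfrac{1}{\epsilon}\bigl(\nabla K(t)-\nabla K(x)\bigr) \in \partial_P g(t).
\]

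For (i), I would invoke $\partial_P F(t)=\nabla f(t)+\partial_P g(t)$ (Proposition~2.3 of \cite{Ye18}, cited in the preliminaries) and add $\nabla f(t)$ to both sides to produce the explicit element
\[
v := \nabla f(t)-\nabla f(x) - \tfrac{1}{\epsilon}\bigl(\nabla K(t)-\nabla K(x)\bigr) \in \partial_P F(t).
\]
Then the triangle inequality together with $L$-Lipschitzness of $\nabla f$ (Assumption~\ref{assump1}(i)) and $M$-Lipschitzness of $\nabla K$ (Assumption~\ref{assump2}(i)) yields $\|v\| \leq L\|x-t\| + \tfrac{M}{\epsilon}\|x-t\| \leq (L+M/\underline{\epsilon})\|x-t\|$, using $\epsilon \geq \underline{\epsilon}$. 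This bounds $\operatorname{dist}\bigl(0,\partial_P F(t)\bigr)\leq \|v\|$, giving (i).

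For (ii), if $x\in T_{D,\epsilon}(x)$ then take $t=x$ in the optimality inclusion. Since $\nabla K(x)-\nabla K(x)=0$, the inclusion collapses to $-\nabla f(x)\in\partial_P g(x)$, equivalently $0\in\nabla f(x)+\partial_P g(x)=\partial_P F(x)$. There is no real obstacle here; the only step requiring care is the proximal sum rule for the optimality condition at $t$, which is standard for a sum of a $C^{1,1}$ function and an lsc function (and is compatible with the formula $\partial_P F=\nabla f+\partial_P g$ already quoted in the preliminaries). Everything else is a direct Lipschitz estimate.
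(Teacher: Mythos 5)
Your proof is correct and follows essentially the same route as the paper: the first-order optimality condition $0\in\nabla f(x)+\partial_P g(t)+\tfrac{1}{\epsilon}\nabla_y D(x,t)$, the shifted subgradient $v=\nabla f(t)-\nabla f(x)-\tfrac{1}{\epsilon}\nabla_y D(x,t)\in\partial_P F(t)$ via $\partial_P F=\nabla f+\partial_P g$, and the Lipschitz bounds on $\nabla f$ and $\nabla K$. Your treatment of (ii) is just a slightly more explicit version of the paper's remark that it follows from (i).
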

\begin{proof}
{\rm(i):} The optimality condition of optimizer $t_{D,\epsilon}(x)\in T_{D,\epsilon}(x)$ yields
\[
0\in\nabla f(x)+\partial_P g\big{(}t_{D,\epsilon}(x)\big{)}+\frac{1}{\epsilon}\nabla_yD\big{(}x,t_{D,\epsilon}(x)\big{)}.
\]
Let $\xi=\nabla f\big{(}t_{D,\epsilon}(x)\big{)}-\nabla f(x)-\frac{1}{\epsilon}\nabla_yD\big{(}x,t_{D,\epsilon}(x)\big{)}$. Then we have
\[
\xi\in\partial_P F\big{(}t_{D,\epsilon}(x)\big{)}=\nabla f(t_{D,\epsilon}(x))+\partial_P g(t_{D,\epsilon}(x)).
\]
By Assumptions~\ref{assump1} and~\ref{assump2}, we have
\begin{eqnarray}
\|\xi\|\leq\|\nabla f\big{(}t_{D,\epsilon}(x)\big{)}-\nabla f(x)\|+\frac{1}{\underline{\epsilon}}\|\nabla_yD\big{(}x,t_{D,\epsilon}(x)\big{)}\|\leq(L+\frac{M}{\underline{\epsilon}})\|x-t_{D,\epsilon}(x)\|,\nonumber
\end{eqnarray}
which follows the desired statement.\\
{\rm(ii):} The claim follows directly from statement (i).
\end{proof}
Under assumptions of Proposition~\ref{prop:Ek}, if $\overline{\epsilon}<\frac{m}{L}$, then it's easy to show that functions $E_{D,\epsilon}(x)$ and $G_{D,\epsilon}(x)$ are continuous, mapping $T_{D,\epsilon}(x)$ is closed and is continuous whenever $T_{D,\epsilon}(x)$ is single valued (see Proposition 6.1 of~\cite{ZhuDeng19}).

Before the end of this section, we introduce the following lemma about the generalized descent inequality.
\begin{lemma}[Generalized descent inequality in the nonconvex case]\label{lemma:1}
Suppose that Assumptions~\ref{assump1} and~\ref{assump2} hold. For any $t_{D,\epsilon}(x)\in T_{D,\epsilon}(x)$, $x\in\RR^n$, $u\in\RR^n$, we have that
\begin{eqnarray}\label{eq:descent}
\mathfrak{a}\left[F(t_{D,\epsilon}(x))-F(u)\right]\leq \mathfrak{b}\|u-x\|^2-\|u-t_{D,\epsilon}(x)\|^2-\mathfrak{c}\|x-t_{D,\epsilon}(x)\|^2,
\end{eqnarray}
where $\mathfrak{a}=2$, $\mathfrak{b}=\frac{M}{\underline{\epsilon}}+2+3L$ and $\mathfrak{c}=\frac{m}{\overline{\epsilon}}-(L+2)$.
\end{lemma}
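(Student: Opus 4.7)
The plan is to derive the inequality by combining the optimality of $t := t_{D,\epsilon}(x)$ in the Bregman proximal subproblem with two applications of the descent lemma for $f$ and the two-sided quadratic bounds on the Bregman distance, and then to synthesize the $-\|u-t\|^2$ term on the right-hand side via the elementary inequality $\|u-t\|^2 \le 2\|u-x\|^2 + 2\|x-t\|^2$.

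First, since $t$ globally minimizes $\langle\nabla f(x),y-x\rangle + g(y) + \tfrac{1}{\epsilon}D(x,y)$ over $y\in\RR^n$, comparing its value with that at an arbitrary $u$ and rearranging yields
\[
g(t) - g(u) \;\le\; \langle\nabla f(x),\,u-t\rangle + \tfrac{1}{\epsilon}\bigl[D(x,u) - D(x,t)\bigr].
\]
This uses only the definition of $\argmin$, so no convexity of $g$ is needed---important since $g$ is merely proper lsc. Next, I would bound $f(t)-f(u)$ by passing through the anchor point $x$: Assumption~\ref{assump1}(i) (the descent property) delivers both $f(t)-f(x) \le \langle\nabla f(x),t-x\rangle + \tfrac{L}{2}\|t-x\|^2$ and $f(x)-f(u) \le \langle\nabla f(u),x-u\rangle + \tfrac{L}{2}\|u-x\|^2$. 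To eliminate $\nabla f(u)$, I would split $\langle\nabla f(u),x-u\rangle = -\langle\nabla f(x),u-x\rangle + \langle\nabla f(u)-\nabla f(x),x-u\rangle$ and bound the cross-term by $L\|u-x\|^2$ using Cauchy--Schwarz and the $L$-Lipschitz continuity of $\nabla f$. Summing the two estimates produces
\[
f(t) - f(u) \;\le\; \langle\nabla f(x),\,t-u\rangle + \tfrac{L}{2}\|t-x\|^2 + \tfrac{3L}{2}\|u-x\|^2,
\]
in which the $3L/2$ coefficient---rather than the sharper $L/2$ obtainable from a more symmetric estimate---is precisely what matches the stated $\mathfrak{b}$.

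Adding the $f$- and $g$-estimates makes the $\langle\nabla f(x),\cdot\rangle$ terms cancel. Multiplying by $2$, invoking the uniform Bregman bounds $D(x,u) \le \tfrac{M}{2}\|u-x\|^2$ and $D(x,t) \ge \tfrac{m}{2}\|x-t\|^2$ from Assumption~\ref{assump2}, and using $\underline{\epsilon}\le \epsilon\le \overline{\epsilon}$ gives
\[
2[F(t)-F(u)] \;\le\; \Bigl(3L+\tfrac{M}{\underline{\epsilon}}\Bigr)\|u-x\|^2 - \Bigl(\tfrac{m}{\overline{\epsilon}}-L\Bigr)\|x-t\|^2.
\]
Finally, to introduce the $-\|u-t\|^2$ term I would add the nonnegative quantity $2\|u-x\|^2+2\|x-t\|^2-\|u-t\|^2$ to the right-hand side. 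This contributes $+2$ to the coefficient of $\|u-x\|^2$, shifts the $\|x-t\|^2$ coefficient so that $L$ becomes $L+2$ inside the subtracted term, and installs the desired $-\|u-t\|^2$, yielding exactly $\mathfrak{b} = \tfrac{M}{\underline{\epsilon}}+2+3L$ and $\mathfrak{c} = \tfrac{m}{\overline{\epsilon}}-(L+2)$.

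The proof is largely bookkeeping with standard inequalities; the only subtle choices are (i) routing the $f$-comparison through $\nabla f(u)$ and paying a Cauchy--Schwarz slack of $L\|u-x\|^2$ (which is what produces the $3L$ rather than $L$ in $\mathfrak{b}$), and (ii) relying only on the minimizer property of $t$ to compare $g(t)$ and $g(u)$, since $g$ is not assumed convex and a subgradient inequality is unavailable.
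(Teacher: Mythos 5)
Your proposal is correct and follows essentially the same route as the paper's proof: the paper bounds the auxiliary quantity $\Delta=\langle\nabla f(x),t-u\rangle+g(t)-g(u)$ from below (via the descent lemma and the same Cauchy--Schwarz slack producing $3L/2$) and from above (via optimality of $t$ and the Bregman bounds), then adds the nonnegative quantity $\|u-x\|^2+\|x-t\|^2-\tfrac12\|u-t\|^2$, which is exactly your chain of estimates written in a slightly different order. All constants match, so no gaps.
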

\begin{proof}
Denote $\Delta=\langle\nabla f(x),t_{D,\epsilon}(x)-u\rangle+g(t_{D,\epsilon}(x))-g(u)$. First, we estimate the lower bound of $\Delta$:\\
\begin{eqnarray}\label{eq:bound1-f-n}
\Delta&=&\langle\nabla f(x),t_{D,\epsilon}(x)-u\rangle+g(t_{D,\epsilon}(x))-g(u)\nonumber\\
&=&\langle\nabla f(x),t_{D,\epsilon}(x)-x\rangle+\langle\nabla f(x),x-u\rangle+g(t_{D,\epsilon}(x))-g(u)\nonumber\\
&\geq&f\left(t_{D,\epsilon}(x)\right)-f(x)-\frac{L}{2}\|x-t_{D,\epsilon}(x)\|^2+\langle\nabla f(x),x-u\rangle+g(t_{D,\epsilon}(x))-g(u)\nonumber\\
&&\qquad\qquad\qquad\qquad\qquad\qquad\mbox{(since $f$ is gradient Lipschitz with modulus $L$)}\nonumber\\
&=&F\left(t_{D,\epsilon}(x)\right)-F(u)-\frac{L}{2}\|x-t_{D,\epsilon}(x)\|^2+\underbrace{f(u)-f(x)-\langle\nabla f(x),u-x\rangle}_{\delta_1}.
\end{eqnarray}
By the gradient Lipschitz continuity of $f$, we estimate the term $\delta_1$ in~\eqref{eq:bound1-f-n}:
\begin{eqnarray}
\delta_1&=&f(u)-f(x)-\langle\nabla f(u),u-x\rangle+\langle\nabla f(x)-\nabla f(u),x-u\rangle\nonumber\\
&\geq&-\frac{L}{2}\|u-x\|^2-\|\nabla f(x)-\nabla f(u)\|\cdot\|x-u\|\nonumber\\
&\geq&-\frac{3L}{2}\|u-x\|^2.\nonumber
\end{eqnarray}
Therefore, we have that
\begin{equation}\label{eq:bound1}
\Delta\geq F\left(t_{D,\epsilon}(x)\right)-F(u)-\frac{L}{2}\|x-t_{D,\epsilon}(x)\|^2-\frac{3L}{2}\|u-x\|^2.
\end{equation}
Since $t_{D,\epsilon}(x)$ solves the minimization problem~\eqref{defi:Tk}, we have
\begin{eqnarray}\label{eq:optimalcondition-gn1}
\Delta&=&\langle\nabla f(x),t_{D,\epsilon}(x)-u\rangle+g(t_{D,\epsilon}(x))-g(u)\nonumber\\
&\leq&\frac{1}{\epsilon}\left[D(x,u)-D(x,t_{D,\epsilon}(x))\right]\nonumber\\
&\leq&\frac{M}{2\underline{\epsilon}}\|u-x\|^2-\frac{m}{2\overline{\epsilon}}\|x-t_{D,\epsilon}(x)\|^2.\qquad\mbox{(by Assumption~\ref{assump2})}
\end{eqnarray}
Since $-\frac{1}{2}\|u-t_{D,\epsilon}(x)\|^2+\|u-x\|^2+\|x-t_{D,\epsilon}(x)\|^2\geq0$,~\eqref{eq:optimalcondition-gn1} follows that
\begin{eqnarray}\label{eq:optimalcondition-gn11}
\Delta&\leq&\frac{M}{2\underline{\epsilon}}\|u-x\|^2-\frac{1}{2}\|u-t_{D,\epsilon}(x)\|^2+\|u-x\|^2+\|x-t_{D,\epsilon}(x)\|^2-\frac{m}{2\overline{\epsilon}}\|x-t_{D,\epsilon}(x)\|^2\nonumber\\
&\leq&\left(\frac{M}{2\underline{\epsilon}}+1\right)\|u-x\|^2-\frac{1}{2}\|u-t_{D,\epsilon}(x)\|^2-\frac{m-2\overline{\epsilon}}{2\overline{\epsilon}}\|x-t_{D,\epsilon}(x)\|^2.
\end{eqnarray}
The desired result follows by
combing~\eqref{eq:bound1} and~\eqref{eq:optimalcondition-gn11}.
\end{proof}
\begin{remark}[Cost-to-go inequality {[46]}] From this lemma with $\kappa=\max\{\frac{2\mathfrak{b}-1}{\mathfrak{a}},\frac{2\mathfrak{b}-\mathfrak{c}}{\mathfrak{a}}\}>0$, we also get for $x,u\in\RR^n$,
\begin{eqnarray}
F\left(t_{D,\epsilon}(x)\right)-F(u)&\leq&\frac{1}{\mathfrak{a}}\left\{2\mathfrak{b}\|u-t_{D,\epsilon}(x)\|^2+2\mathfrak{b}\|t_{D,\epsilon}(x)-x\|^2-\|u-t_{D,\epsilon}(x)\|^2-\mathfrak{c}\|x-t_{D,\epsilon}(x)\|^2\right\}\nonumber\\
&\leq&\kappa\left(\|u-t_{D,\epsilon}(x)\|^2+\|x-t_{D,\epsilon}(x)\|^2\right)
\end{eqnarray}
which is one cost-to-go estimate [46].
\end{remark}
\subsection{The properties of Bregman type mapping and function under semiconvexity of $g$}
\begin{proposition}\label{prop:singlevalue} {\bf (Single-valueness of Bregman proximal mappings)}
Suppose that Assumptions~\ref{assump1} and~\ref{assump2} hold, and that  $g$ is semiconvex on $\RR^n$ with constant $\rho$ and $\overline{\epsilon}<\min\{\frac{m}{L},\frac{m}{\rho}\}$. Then for all $x\in\RR^n$, $T_{D,\epsilon}(x)$ is single-valued.
\end{proposition}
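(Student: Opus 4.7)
The plan is to reduce single-valuedness to strong convexity of the objective in the subproblem defining $T_{D,\epsilon}(x)$. For fixed $x$, write
\[
\phi_x(y):=\langle\nabla f(x),y-x\rangle + g(y) + \frac{1}{\epsilon}D(x,y),
\]
so that $T_{D,\epsilon}(x)=\argmin_{y\in\RR^n}\phi_x(y)$. Nonemptiness is already established (under Assumptions \ref{assump1}, \ref{assump2} and the conditions guaranteed in Proposition~\ref{prop:Ek}), so it suffices to show $\phi_x$ is strictly convex in $y$; in fact I will argue it is strongly convex, which gives uniqueness of the minimizer immediately.

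The two ingredients to combine are: (a) the semiconvexity decomposition of $g$, namely $g(\cdot)=h(\cdot)-\tfrac{\rho}{2}\|\cdot\|^2$ with $h$ convex, from Definition 2.2(iii); and (b) the strong convexity of $D(x,\cdot)$ inherited from $K$. For (b), observe that by the definition of the Bregman distance,
\[
D(x,y)=K(y)-K(x)-\langle\nabla K(x),y-x\rangle,
\]
so as a function of $y$, $D(x,\cdot)$ differs from $K(\cdot)$ only by a term affine in $y$. Since $K$ is strongly convex with modulus $m$ (Assumption~\ref{assump2}(i)), $D(x,\cdot)$ is strongly convex in $y$ with the same modulus $m$, whence $\tfrac{1}{\epsilon}D(x,\cdot)$ has strong-convexity modulus $m/\epsilon$.

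Substituting (a) and (b) gives
\[
\phi_x(y) \;=\; \underbrace{\langle\nabla f(x),y-x\rangle + h(y)}_{\text{convex in }y} \;+\; \underbrace{\Bigl[\tfrac{1}{\epsilon}D(x,y)-\tfrac{\rho}{2}\|y\|^2\Bigr]}_{\text{modulus }\,m/\epsilon - \rho},
\]
so $\phi_x$ is strongly convex in $y$ with modulus at least $m/\epsilon-\rho$. The hypothesis $\overline{\epsilon}<m/\rho$ together with $\epsilon\le\overline{\epsilon}$ (Assumption~\ref{assump2}(ii)) yields $m/\epsilon-\rho>0$, so $\phi_x$ has a unique minimizer, proving $T_{D,\epsilon}(x)$ is single-valued. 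The other piece of the hypothesis, $\overline{\epsilon}<m/L$, is not actually used in this step; it is inherited from Proposition~\ref{prop:Ek} to ensure the global descent/nonemptiness machinery, and is mentioned only so the proposition is stated in a self-contained form. The argument has no real obstacle: the only subtlety is recognizing that one must beat the concavity constant $\rho$ of $g$ with the strong-convexity of the Bregman term, which is exactly what the bound $\overline{\epsilon}<m/\rho$ encodes.
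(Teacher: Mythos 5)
Your argument is correct and is exactly the content behind the paper's one-line proof (``derived directly by the definition of semi-convexity''): writing $g=h-\tfrac{\rho}{2}\|\cdot\|^2$ with $h$ convex and using that $D(x,\cdot)$ differs from the $m$-strongly convex $K$ only by an affine term, the subproblem objective is strongly convex with modulus $m/\epsilon-\rho>0$ since $\epsilon\le\overline{\epsilon}<m/\rho$, so the minimizer is unique. You have simply filled in the details the paper omits, including the correct observation that $\overline{\epsilon}<m/L$ plays no role in this particular step.
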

\begin{proof} The claim is derived directly by the definition of semi-convexity.\quad
\end{proof}
\begin{proposition}\label{prop:Gk}{\bf (Further properties of Bregman type mappings and functions)}
Suppose that the assumptions of Proposition~\ref{prop:singlevalue} hold. Then for $x\in\RR^n$ and $\overline{\epsilon}<\min\{\frac{m}{L},\frac{m}{\rho}\}$ the following statements hold:
\begin{itemize}
\item[{\rm(i)}] $E_{D,\epsilon}(x)\leq F(x)-\frac{1}{2}\big{(}\frac{m}{\overline{\epsilon}}-\rho\big{)}\|x-T_{D,\epsilon}(x)\|^2$;
\item[{\rm(ii)}] $\frac{1}{2\overline{\epsilon}^2}(m-\overline{\epsilon}\rho)\|x-T_{D,\epsilon}(x)\|^2\leq G_{D,\epsilon}(x)$;
\item[{\rm(iii)}] $G_{D,\epsilon}(x)\leq\frac{\overline{\epsilon}}{2\underline{\epsilon}(m-\overline{\epsilon}\rho)}dist^2\left(0,\partial_P F(x)\right)$;
\item[{\rm(iv)}] $\|x-T_{D,\epsilon}(x)\|\leq\left(\frac{\overline{\epsilon}}{m-\overline{\epsilon}\rho}\right)\sqrt{\frac{\overline{\epsilon}}{\underline{\epsilon}}}dist\left(0,\partial_PF(x)\right)$;
\item[{\rm(v)}] $G_{D,\epsilon}(x)=0$ if only if $x=T_{D,\epsilon}(x)$ or $0\in\partial_PF(x)$.
\end{itemize}
\end{proposition}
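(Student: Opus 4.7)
My plan proceeds in five interlocking steps corresponding to the five parts of the proposition. The common engine is the observation that, under the standing hypotheses $\overline{\epsilon}<\min\{m/L,\,m/\rho\}$, the subproblem objective
$h_x(y):=\langle\nabla f(x),y-x\rangle+g(y)+\tfrac{1}{\epsilon}D(x,y)$
is strongly convex in $y$: semiconvexity of $g$ with modulus $\rho$ contributes curvature at least $-\rho$, while $\tfrac{1}{\epsilon}D(x,\cdot)$ contributes at least $\tfrac{m}{\epsilon}\ge\tfrac{m}{\overline{\epsilon}}$, giving a strong-convexity modulus $\mu:=\tfrac{m}{\overline{\epsilon}}-\rho>0$. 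This is exactly what makes $T_{D,\epsilon}(x)$ the unique minimizer (Proposition~\ref{prop:singlevalue}) and will drive everything that follows.

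For (i), I would apply the standard quadratic lower bound for a strongly convex function at its unique minimizer $T_{D,\epsilon}(x)$, evaluated at $y=x$, noting that $h_x(x)=g(x)$ and $h_x(T_{D,\epsilon}(x))=E_{D,\epsilon}(x)-f(x)$. Rearranging gives (i) immediately. For (ii), substituting the identity $E_{D,\epsilon}(x)=F(x)-\epsilon G_{D,\epsilon}(x)$ from Proposition~\ref{prop:Ek}(i) into (i) and using $\epsilon\le\overline{\epsilon}$ one more time (on the $1/\epsilon$ factor) gives the claim.

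Step (iii) is the main technical point. Given any $\xi\in\partial_P F(x)$, decompose $\xi=\nabla f(x)+v$ with $v\in\partial_P g(x)$ (using $\partial_P F=\nabla f+\partial_P g$), and invoke the semiconvex subgradient inequality $g(y)\ge g(x)+\langle v,y-x\rangle-\tfrac{\rho}{2}\|y-x\|^2$ for all $y\in\RR^n$. Plugging this together with $D(x,y)\ge\tfrac{m}{2}\|y-x\|^2$ into the definition $-\epsilon G_{D,\epsilon}(x)=\min_y\{\langle\nabla f(x),y-x\rangle+g(y)-g(x)+\tfrac{1}{\epsilon}D(x,y)\}$ bounds the inner expression from below by $\langle\xi,y-x\rangle+\tfrac{1}{2}(\tfrac{m}{\epsilon}-\rho)\|y-x\|^2$, an unconstrained quadratic in $y$ whose minimum equals $-\|\xi\|^2/(2(\tfrac{m}{\epsilon}-\rho))$. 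Dividing by $\epsilon$, using the prescribed bounds $\underline{\epsilon}\le\epsilon\le\overline{\epsilon}$, and infimizing over $\xi\in\partial_P F(x)$ yields the claimed upper bound on $G_{D,\epsilon}(x)$. The delicate point here, and the step I expect to require the most care, is the bookkeeping of $\underline{\epsilon}$ versus $\overline{\epsilon}$ so that the stated coefficient $\tfrac{\overline{\epsilon}}{2\underline{\epsilon}(m-\overline{\epsilon}\rho)}$ is recovered with the correct ratio $\overline{\epsilon}/\underline{\epsilon}$.

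Finally, (iv) is obtained by chaining (ii) and (iii) to eliminate $G_{D,\epsilon}(x)$ in favor of $dist^2(0,\partial_P F(x))$ and taking a square root, at which point the factor $\sqrt{\overline{\epsilon}/\underline{\epsilon}}$ emerges naturally. For (v), I close the loop via a three-way equivalence: $G_{D,\epsilon}(x)=0$ forces $\|x-T_{D,\epsilon}(x)\|=0$ by (ii); $x=T_{D,\epsilon}(x)$ implies $0\in\partial_P F(x)$ by Proposition~\ref{prop:Fk}(ii); and $0\in\partial_P F(x)$ forces $G_{D,\epsilon}(x)\le 0$ by (iii), which together with the evident nonnegativity $G_{D,\epsilon}\ge 0$ forces equality, closing the loop.
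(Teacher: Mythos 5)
Your proposal is correct and follows essentially the same route as the paper: strong convexity of the subproblem objective (from semiconvexity of $g$ plus the $\tfrac{m}{\epsilon}$-curvature of the Bregman term) gives (i) and hence (ii), the semiconvex subgradient inequality at $x$ combined with minimizing the resulting quadratic gives (iii) with the same constant, and (iv)--(v) follow by chaining as you describe. The only cosmetic differences are that the paper derives (i) by writing out the proximal subgradient inequality at $T_{D,\epsilon}(x)$ rather than quoting quadratic growth at the minimizer, and in (iii) it evaluates at $y=T_{D,\epsilon}(x)$ and optimizes the scalar quadratic in $\|x-T_{D,\epsilon}(x)\|$ instead of minimizing the quadratic lower bound over all $y$; both yield identical bounds.
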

\begin{proof}
By Proposition~\ref{prop:singlevalue}, $T_{D,\epsilon}(x)$ is single-valued.\\
{\rm (i):} The optimality condition for the minimization problem in~\eqref{eq:sub-problem} follows that
\begin{equation}\label{eq:13}
0\in\nabla f(x)+\partial_L g\big{(}T_{D,\epsilon}(x)\big{)}+\frac{1}{\epsilon}\nabla_{y}D\big{(}x,T_{D,\epsilon}(x)\big{)}.
\end{equation}
Since $g$ is l.s.c and semiconvex with $\rho$, we have
\begin{eqnarray}\label{eq:10}
g(x)&\geq&g\big{(}T_{D,\epsilon}(x)\big{)}-\frac{\rho}{2}\|x-T_{D,\epsilon}(x)\|^2-\langle\nabla f(x)+\frac{1}{\epsilon}\nabla_{y}D\big{(}x,T_{D,\epsilon}(x)\big{)},x-T_{D,\epsilon}(x)\rangle\nonumber\\
&\geq&g\big{(}T_{D,\epsilon}(x)\big{)}-\frac{\rho}{2}\|x-T_{D,\epsilon}(x)\|^2-\langle\nabla f(x),x-T_{D,\epsilon}(x)\rangle\nonumber\\
&&+\frac{1}{\epsilon}D(x,T_{D,\epsilon}(x))-\frac{1}{\epsilon}D(x,x)+\frac{m}{2\overline{\epsilon}}\|x-T_{D,\epsilon}(x)\|^2\qquad\mbox{(by Assumption~\ref{assump2})}\nonumber\\
    &\geq&g\big{(}T_{D,\epsilon}(x)\big{)}-\langle\nabla f(x),x-T_{D,\epsilon}(x)\rangle+\frac{1}{\epsilon}D(x,T_{D,\epsilon}(x))+\frac{1}{2}\left(\frac{m}{\overline{\epsilon}}-\rho\right)\|x-T_{D,\epsilon}(x)\|^2.\nonumber
\end{eqnarray}
Adding $f(x)$ to both sides, by the definition of $E_{D,\epsilon}(x)$ the claim is provided.\\
{\rm(ii):} From statement (i) of Proposition~\ref{prop:Ek} we have
\begin{eqnarray*}
G_{D,\epsilon}(x)=\frac{1}{\epsilon}\big{(}F(x)-E_{D,\epsilon}(x)\big{)}\geq\frac{1}{2\overline{\epsilon}^2}\big{(}m-\overline{\epsilon}\rho\big{)}\|x-T_{D,\epsilon}(x)\|^2.\quad\mbox{(by (i) of this proposition)}
\end{eqnarray*}
{\rm(iii):} For $\overline{\epsilon}<\min\{\frac{m}{L},\frac{m}{\rho}\}$, we have
\begin{eqnarray*}
\epsilon G_{D,\epsilon}(x)=-\langle\nabla f(x), T_{D,\epsilon}(x)-x\rangle-g\left(T_{D,\epsilon}(x)\right)+g(x)-\frac{1}{\epsilon}D(x,T_{D,\epsilon}(x)).
\end{eqnarray*}
Let $\nu\in\partial_Pg(x)$, thanks the semiconvex of $g$, we get
\begin{eqnarray}
\epsilon G_{D,\epsilon}(x)&\leq&-\langle\nabla f(x), T_{D,\epsilon}(x)-x\rangle-\langle\nu, T_{D,\epsilon}(x)-x\rangle+\frac{\rho}{2}\|x-T_{D,\epsilon}(x)\|^2-\frac{m}{2\overline{\epsilon}}\|x-T_{D,\epsilon}(x)\|^2\nonumber\\
&=&-\langle\nabla f(x)+\nu, T_{D,\epsilon}(x)-x\rangle-\frac{1}{2}\left(\frac{m}{\overline{\epsilon}}-\rho\right)\|x-T_{D,\epsilon}(x)\|^2\nonumber\\
&\leq&\|\nabla f(x)+\nu\|\cdot\|x-T_{D,\epsilon}(x)\|-\frac{1}{2}\left(\frac{m}{\overline{\epsilon}}-\rho\right)\|x-T_{D,\epsilon}(x)\|^2\nonumber\\
&\leq&\frac{\overline{\epsilon}}{2(m-\overline{\epsilon}\rho)}\|\nabla f(x)+\nu\|^2.
\end{eqnarray}
Therefore $G_{D,\epsilon}(x)\leq\frac{\overline{\epsilon}}{2\underline{\epsilon}(m-\overline{\epsilon}\rho)}\|\nabla f(x)+\nu\|^2$, $\forall\nu\in\partial_Pg(x)$, and the claim is verified.\\
{\rm(iv)} and {\rm(v):} The statement (iv) is a simple consequence of (ii) and (iii). (v) follows directly from statements (ii), (iii) and~\eqref{eq:13}.
\end{proof}
\section{Convergence analysis of VBPG}\label{sec:convergence}
Section \ref{sec:convergence} studies convergence behaviors of sequences generated by the update formula of VBPG~\eqref{eq:sub-problem}.
We will  use $D^k$ {\it explicitly} and assume that $D^k$ and parameters $\epsilon^k$ satisfy  Assumption~\ref{assump2} uniformly throughout  Sections \ref{sec:convergence}.
A number of basic properties of sequences $\{x^k\}$ and $\{F(x^k)\}$ are summarized in the following proposition.
\begin{proposition}\label{proposition4} Suppose that the Assumptions~\ref{assump1} and~\ref{assump2} hold and $\overline{\epsilon}<\frac{m}{L}$. Let $\{x^k\}$ be a sequence generated by the VBPG method. Then the following assertions hold:
\begin{itemize}
\item[{\rm(i)}] The sequence $\{F(x^k)\}$ is strictly decreasing (unless $x^k\in\bar{\mathbf{X}}_P$ for some $k$).
\item[{\rm(ii)}] $\{x^k\}$ is bounded, and any cluster point $\bar{x}$ of $\{x^k\}$ is a limiting critical point of $F$: $0\in\partial_L F(\bar{x})$. Furthermore, $\bar{x}$ is also actually a proximal critical point of $F$: $0\in\partial_P F(\bar{x})$.
\item[{\rm(iii)}] Let $\Omega$ be the set of accumulation points of $\{x^k\}$. Then $F_{\zeta}:=\lim\limits_{k\rightarrow+\infty} F(x^k)$ exists and $F(\bar{x})\leq F_{\zeta}$ for every $\bar{x}$ in $\Omega$.
\end{itemize}
\end{proposition}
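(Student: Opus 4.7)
The plan is to deduce each of the three parts from the tools built up in Section~\ref{sec:pre}, handling the claims in order.

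For (i), I would invoke Proposition~\ref{prop:Ek}(iii) with $x=x^k$, $\epsilon=\epsilon^k$ and $t_{D,\epsilon}(x)=x^{k+1}$. Since $\overline{\epsilon}<m/L$, the constant $a_k=\tfrac{1}{2}(m/\epsilon^k-L)$ is uniformly bounded below by a positive number, giving
\[
F(x^{k+1})\le F(x^k)-a_k\|x^k-x^{k+1}\|^2.
\]
Strict decrease follows unless $x^{k+1}=x^k$, i.e.\ $x^k$ is a fixed point of $T_{D^k,\epsilon^k}$; Proposition~\ref{prop:Fk}(ii) then places $x^k\in\bar{\mathbf{X}}_P$. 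Boundedness in (ii) is immediate from level-boundedness (Assumption~\ref{assump1}(iii)) applied to $\{F\le F(x^0)\}$. Telescoping the descent inequality gives $\sum_k\|x^k-x^{k+1}\|^2<\infty$, so $\|x^k-x^{k+1}\|\to 0$. Fix a cluster point $\bar x$ with $x^{k_j}\to\bar x$; then $x^{k_j+1}\to\bar x$ as well. To show $F(x^{k_j+1})\to F(\bar x)$, I would compare $x^{k_j+1}$ with the candidate $y=\bar x$ in (AP$_k$): using gradient-Lipschitzness of $f$ and the quadratic bounds on $D^{k_j}$ from Assumption~\ref{assump2}, letting $j\to\infty$ yields $\limsup_j g(x^{k_j+1})\le g(\bar x)$, which combined with lower semicontinuity forces $g(x^{k_j+1})\to g(\bar x)$ and hence $F(x^{k_j+1})\to F(\bar x)$.

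For the identification of $\bar x$ as a critical point, the optimality condition of (AP$_k$) provides
\[
\xi^{k_j+1}:=-\nabla f(x^{k_j})-\tfrac{1}{\epsilon^{k_j}}\bigl[\nabla K^{k_j}(x^{k_j+1})-\nabla K^{k_j}(x^{k_j})\bigr]\in\partial_P g(x^{k_j+1}),
\]
and $\xi^{k_j+1}\to -\nabla f(\bar x)$ by continuity. The key observation is that since $x^{k_j+1}$ is a \emph{global} minimizer of (AP$_k$), comparing its value with an arbitrary $y\in\RR^n$ and invoking the $M$-Lipschitzness of $\nabla K^{k_j}$ upgrades this to the \emph{uniform} proximal inequality
\[
g(y)\ge g(x^{k_j+1})+\langle \xi^{k_j+1},y-x^{k_j+1}\rangle-\tfrac{M}{2\underline{\epsilon}}\|y-x^{k_j+1}\|^2\quad\forall y\in\RR^n.
\]
Passing $j\to\infty$ with $y$ fixed (using $g(x^{k_j+1})\to g(\bar x)$) preserves the inequality at $\bar x$ with limiting subgradient $-\nabla f(\bar x)$ and the same modulus, so $-\nabla f(\bar x)\in\partial_P g(\bar x)$; by the decomposition $\partial_P F=\nabla f+\partial_P g$ we conclude $0\in\partial_P F(\bar x)\subseteq\partial_L F(\bar x)$. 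For (iii), $\{F(x^k)\}$ is monotone decreasing by (i) and bounded below by $F^*>-\infty$, so $F_\zeta:=\lim_k F(x^k)$ exists; lower semicontinuity then yields $F(\bar x)\le\liminf_j F(x^{k_j})=F_\zeta$ for every $\bar x\in\Omega$.

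The main obstacle is the sharpening of $0\in\partial_L F(\bar x)$ to $0\in\partial_P F(\bar x)$: a naive appeal to the closedness of $\partial_P$ fails because the prox-modulus and radius in the definition can in principle degenerate along the sequence. The resolution, as outlined, is to extract from the \emph{global} minimization defining (AP$_k$) a uniform quadratic bound with modulus $M/(2\underline{\epsilon})$ independent of $j$ and valid on all of $\RR^n$; this uniformity is precisely what survives the limit and upgrades the conclusion from limiting to proximal criticality.
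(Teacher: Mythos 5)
Your proof is correct and takes essentially the same route as the paper: parts (i) and (iii) coincide with the paper's argument, and for the upgrade to proximal criticality in (ii) the paper likewise exploits the global minimality of $x^{k+1}$ in (AP$_k$) together with the uniform bound $D^k(x,y)\leq\frac{M}{2}\|x-y\|^2$ to pass a quadratic inequality to the limit, phrasing the conclusion as $F(\bar{x})\leq F(x)+(\frac{L}{2}+\frac{M}{2\underline{\epsilon}})\|x-\bar{x}\|^2$ for all $x\in\mathbf{dom}\,F$ rather than as $-\nabla f(\bar{x})\in\partial_P g(\bar{x})$. The only cosmetic difference is that you isolate $g$ and recombine with $\nabla f$ at the end while the paper keeps $F$ together throughout; both arguments hinge on exactly the uniformity of the modulus $M/(2\underline{\epsilon})$ that you single out as the key observation.
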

\begin{proof}
{\rm (i):} By (iii) of Proposition~\ref{prop:Ek}, we have
\begin{equation}\label{eq:22}
F(x^{k+1})\leq F(x^k)-a\|x^k-x^{k+1}\|^2\quad\mbox{with}\quad a=\frac{1}{2}(\frac{m}{\epsilon}-L).
\end{equation}
If $x^k=x^{k+1}$, then by (ii) of Proposition~\ref{prop:Fk}, we have $x^k\in\bar{\mathbf{X}}_P$. Otherwise $\{F(x^k)\}$ is strictly decreasing.\\
{\rm (ii):} By summation for~\eqref{eq:22}, we have
\[
a\sum_{k=0}^{N}\|x^k-x^{k+1}\|^2\leq F(x^0)-F(x^{N+1})\leq F(x^0)-F^*\;\forall N.
\]
and it follows that
\[
\sum\limits_{k=0}^{\infty}\|x^k-x^{k+1}\|^2<+\infty,\quad\|x^k-x^{k+1}\|\rightarrow 0,\;\mbox{when}\; k\rightarrow\infty.
\]
The boundedness of $\{x^k\}$ comes from Assumption~\ref{assump1}, $F=(f+g)$ is level bounded along with the fact that $\{F(x^k)\}$ is strictly decreasing. Thus $\{x^k\}$ has at least one cluster point. Let $\bar{x}$ denote such a point and $x^{k'}\rightarrow\bar{x}$, $k'\rightarrow\infty$. From statement (i) of Proposition~\ref{prop:Fk}, we have
$$dist\left(0,\partial_LF(x^{k'})\right)\leq dist\left(0,\partial_PF(x^{k'})\right)\leq(L+\frac{M}{\underline{\epsilon}})\|x^{k'}-x^{k'-1}\|.$$
Thus $dist\left(0,\partial_LF(x^{k'})\right)\rightarrow0$ as $k'\rightarrow\infty$. By the closedness of $\partial_L F(\cdot)$, we have $0\in \partial_L F(\bar{x})$.\\
Furthermore, by the update formula of VBPG in (AP$_k$), we get
\begin{eqnarray}
&&\langle\nabla f(x^k),x^{k+1}-x^{k}\rangle+g(x^{k+1})+\frac{1}{\epsilon^k}D^k(x^k,x^{k+1})\nonumber\\
&\leq&\langle\nabla f(x^k),x-x^k\rangle+g(x)+\frac{1}{\epsilon^k}D^k(x^k,x)\nonumber\\
&\leq&f(x)-f(x^k)+\frac{L}{2}\|x-x^k\|^2+g(x)+\frac{1}{\epsilon^k}D^k(x^k,x)\;\forall x\in dom(F).\nonumber
\end{eqnarray}
By Assumption~\ref{assump2} for $D^k$, the above inequality yields
\begin{eqnarray}\label{eq:eq}
&&\langle\nabla f(x^k),x^{k+1}-x\rangle+g(x^{k+1})+\frac{m}{2\overline{\epsilon}}\|x^k-x^{k+1}\|^2\nonumber\\
&\leq&f(x)-f(x^k)+\frac{L}{2}\|x-x^k\|^2+g(x)+\frac{M}{2\underline{\epsilon}}\|x^k-x\|^2\;\forall x\in dom(F).
\end{eqnarray}
Taking $k=k'$ such that $\lim\limits_{k'\rightarrow\infty}x^{k'}=\bar{x}$, from the continuity of $f$ and lower semicontinuous of $g$, we obtain that $\lim\limits_{k'\rightarrow\infty}f(x^{k'})=f(\bar{x})$ and $\lim\limits_{k'\rightarrow\infty}g(x^{k'})\geq g(\bar{x})$. Therefore, by taking $k'\rightarrow\infty$ on both sides of~\eqref{eq:eq} and one has
$$F(\bar{x})\leq F(x)+(\frac{L}{2}+\frac{M}{2\underline{\epsilon}})\|x-\bar{x}\|^2\;\forall x\in dom(F),$$
which implies $0\in\partial_PF(\bar{x})$.\\
{\rm (iii):} By~\eqref{eq:22}, $\lim\limits_{k\rightarrow\infty}F(x^k)\rightarrow F_{\zeta}\geq F^*$. Let $\bar{x}\in\Omega$, then there exists a subsequence
$x^{k'}$ of $\{x^k\}$ such that  $x^{k'}\rightarrow\bar{x}$. By the lower semicontinuity of $F$ on \mbox{dom}~$F$ and the convergence  of $\{F (x^k)\}$,
we have $F(\bar{x})\leq\lim\limits_{k'\rightarrow\infty}F(x^{k'})=F_{\zeta}$.
\end{proof}
In order to study the linear convergence of the sequence generated by VBPG, we need the following concept of value proximity error bound. Let $\bar{x}\in \RR^n$ and $\bar{F}=F(\bar{x})$. For given positive numbers $\eta$ and $\mu$, let
$$\mathfrak{B}(\bar{x};\eta,\nu)=\mathbb{B}(\bar{x};\eta)\cap\{x\in \RR^n~:~\bar{F}<F(x)<\bar{F}+\nu\}.$$
\begin{definition}[VBPG iteration based Value Proximity Error Bound (VP-EB)] Let $\{x^k\}$ be the sequence generated by VBPG method and $\bar{x}$ be an accumulation point of $\{x^k\}$. We say the VP-EB holds at $\bar{x}$ if there exist $\kappa'$, $\eta$ and $\nu>0$ such that
\begin{equation}\label{eq:VP_EB}
F(x^{k+1})-F(\bar{x})\leq\kappa'\|x^k-x^{k+1}\|^2\;\forall x^{k+1}\in\mathfrak{B}(\bar{x};\eta,\nu).
\end{equation}
\end{definition}
We next show that a sequence generated by
 (\ref{eq:sub-problem}) is convergent and has a finite length property.
\begin{proposition}{\bf(Finite length property of whole sequence $\{x^k\}$)}\label{prop:3.1}
Let the sequence $\{x^k\}$ be generated by VBPG method and $\bar{x}$ be an accumulation point of $\{x^k\}$. Suppose that the Assumptions~\ref{assump1} and~\ref{assump2} hold and $\overline{\epsilon}<\frac{m}{L}$  and that the VP-EB holds at the point $\bar{x}$ with $\kappa'$, $\eta$ and $\nu>0$. Let $a$ be the constant given in Proposition~\ref{prop:Ek} and $\bar{F}=F(\bar{x})$. Then the following statements hold:
\begin{itemize}
\item[{\rm(i)}] $x^k\in\mathfrak{B}(\bar{x};\eta,\nu)$~~$\forall k\geq k_0$;
\item[{\rm(ii)}] $\sum\limits_{i=0}^{+\infty}\|x^i-x^{i+1}\|<+\infty$ (finite length property);
\item[{\rm(iii)}] The sequence $\{x^k\}$ actually converges to the point $\bar{x}$ which is a proximal critical point of $F$.
\end{itemize}
\end{proposition}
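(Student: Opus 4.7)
The plan is to combine the VP-EB with the descent inequality $a\|x^k - x^{k+1}\|^2 \le F(x^k) - F(x^{k+1})$ from Proposition~\ref{prop:Ek}(iii) to obtain a geometric contraction, then close a simultaneous induction that proves (i) together with a geometric bound on consecutive displacements, and finally harvest (ii) and (iii) as consequences. Write $e_k := F(x^k) - F(\bar{x})$ and $q := \kappa'/(\kappa' + a) \in (0,1)$. Whenever $x^{k+1} \in \mathfrak{B}(\bar{x};\eta,\nu)$, the VP-EB~\eqref{eq:VP_EB} and the descent inequality jointly give
\[
e_{k+1} \le \kappa'\|x^k - x^{k+1}\|^2 \le \frac{\kappa'}{a}(e_k - e_{k+1}) \quad\Longrightarrow\quad e_{k+1} \le q\, e_k,
\]
together with $\|x^k - x^{k+1}\|^2 \le e_k/a$ from the descent alone.

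Before setting up the induction, I would verify that $F(x^k) \downarrow F(\bar{x})$. By Proposition~\ref{proposition4}(iii), $F(x^k) \downarrow F_\zeta \ge F(\bar{x})$. Along the subsequence $x^{k_j} \to \bar{x}$ delivered by the fact that $\bar{x}$ is an accumulation point, together with $\|x^k - x^{k+1}\| \to 0$ from Proposition~\ref{proposition4}(ii), the iterates $x^{k_j}$ eventually lie in $\mathfrak{B}(\bar{x};\eta,\nu)$ (using that $F_\zeta < F(\bar x) + \nu$, which must hold for the EB hypothesis and conclusion~(i) to be non-vacuous). The VP-EB then yields $e_{k_j} \le \kappa'\|x^{k_j-1}-x^{k_j}\|^2 \to 0$, forcing $F_\zeta = F(\bar{x})$ and hence $e_k \downarrow 0$. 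In the degenerate case $e_{k_1}=0$ for some $k_1$, the descent inequality forces $x^k = x^{k_1}$ for all $k \ge k_1$, and this constant must coincide with $\bar{x}$ (the only accumulation point), so (ii) and (iii) hold trivially. From here on I assume $e_k > 0$ for every $k$.

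I would then pick $k_0$ large enough that $x^{k_0}$ is close to $\bar{x}$, $F(x^{k_0}) < F(\bar{x}) + \nu$, and
\[
\|x^{k_0} - \bar{x}\| + \frac{\sqrt{e_{k_0}/a}}{1-\sqrt{q}} < \eta;
\]
both summands can be made arbitrarily small by the accumulation-point property and by $e_k \downarrow 0$. An induction on $K \ge k_0$ establishes (i): the value part $F(\bar{x}) < F(x^{K+1}) \le F(x^K) < F(\bar{x}) + \nu$ is immediate from descent and $e_k > 0$. For the distance part, the inductive hypothesis $x^{k_0},\dots,x^{K} \in \mathfrak{B}$ allows the contraction from Step~1 to be iterated along $k_0 \le k \le K - 1$, yielding $e_k \le q^{k - k_0}\,e_{k_0}$ and hence $\|x^k - x^{k+1}\| \le \sqrt{e_{k_0}/a}\,(\sqrt{q})^{k-k_0}$ for $k_0 \le k \le K$ (the bound for $k=K$ uses only descent). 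A triangle inequality followed by geometric summation gives $\|x^{K+1} - \bar{x}\| \le \|x^{k_0} - \bar{x}\| + \sqrt{e_{k_0}/a}/(1 - \sqrt{q}) < \eta$, closing the induction.

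Once (i) is in hand, (ii) follows by summing the same geometric bound $\sum_{k \ge k_0}\|x^k - x^{k+1}\| \le \sqrt{e_{k_0}/a}/(1-\sqrt{q})$ and adding the finite initial segment $k < k_0$. Finite length implies $\{x^k\}$ is Cauchy, hence convergent, and the limit must equal $\bar{x}$ since that is the unique accumulation point; Proposition~\ref{proposition4}(ii) then identifies $\bar{x}$ as a proximal critical point, giving (iii). The main obstacle is threading the induction so that the contraction $e_{k+1} \le q\,e_k$ is only invoked where $x^{k+1}$ is already known to lie in $\mathfrak{B}$, which keeps the argument non-circular; the accompanying subtlety is the preliminary reduction $F_\zeta = F(\bar{x})$, which is where the VP-EB already does work before the induction even begins.
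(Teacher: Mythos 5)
Your proof is correct, but the engine driving it is genuinely different from the paper's. The paper obtains the finite length property by the classical Kurdyka--{\L}ojasiewicz-style device: using concavity of $t\mapsto t^{1/2}$ together with the descent inequality and VP-EB, it derives $2\|x^{i+1}-x^i\|\leq\|x^i-x^{i-1}\|+\tfrac{2\sqrt{\kappa'}}{a}\bigl[(F(x^i)-\bar{F})^{1/2}-(F(x^{i+1})-\bar{F})^{1/2}\bigr]$ and telescopes this recursion inside the same kind of induction you run to keep the iterates in $\mathfrak{B}(\bar{x};\eta,\nu)$. You instead extract the $Q$-linear contraction $e_{k+1}\leq\tfrac{\kappa'}{\kappa'+a}\,e_k$ directly from VP-EB plus descent and sum the resulting geometric series $\|x^k-x^{k+1}\|\leq\sqrt{e_k/a}$; this is precisely the contraction the paper only introduces later, in the proof of Theorem~\ref{theo1}, so in effect you front-load that theorem's argument and obtain finite length (and, implicitly, the $R$-linear rate) in one pass. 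What each approach buys: yours is shorter and makes the geometric decay explicit from the start; the paper's square-root telescoping is the more robust template, since it would still deliver finite length under a VP-EB-type estimate too weak to produce a geometric contraction. Two further points in your favor: you justify $F_\zeta=F(\bar{x})$ and isolate the degenerate case $e_{k_1}=0$, both of which the paper passes over silently (its appeal to Proposition~\ref{proposition4}(iii) only gives $F_\zeta\geq\bar{F}$, not $F_\zeta<\bar{F}+\nu$). Your justification of $F_\zeta=F(\bar F)+0$ via non-vacuousness of the hypothesis is the one soft spot -- the clean route is to note that the argument in the proof of Proposition~\ref{proposition4}(ii) already yields $F(x^{k'+1})\to F(\bar{x})$ along the convergent subsequence, hence $F_\zeta=\bar{F}$ without invoking VP-EB -- but since the paper makes the same implicit assumption, this is not a gap relative to the paper's own standard.
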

\begin{proof}
\rm{(i)}: By (i) and (iii) of Proposition \ref{proposition4}, there is $k_0$ such that we have $\bar{F}<F(x^k)<\bar{F}+\nu$ $\forall k\geq k_0$. From assumptions, without loss of generality, we assume that
\begin{eqnarray}
&&\bar{F}<F(x^{k_0})<\bar{F}+\nu\label{eq:condition1}\\
\mbox{and}&&\|x^{k_0}-\bar{x}\|+\frac{2(\sqrt{a}+\sqrt{\kappa'})}{a}\sqrt{F(x^{k_0})-\bar{F}}<\eta.\label{eq:condition2}
\end{eqnarray}
We will use the Principle of Mathematical Introduction to prove that the sequence $\{x^k\}\subset\mathfrak{B}(\bar{x};\eta,\nu)$ $\forall k\geq k_0$. It is clear that $x^{k_0}\in\mathfrak{B}(\bar{x};\eta,\nu)$ by (\ref{eq:condition1}) and (\ref{eq:condition2}). The inequalities $\bar{F}<F(x^{k_0+1})\leq F(x^{k_0})<\bar{F}+\nu$ hold trivially. On the other hand, by~\eqref{eq:22}, we have $$\|x^{k_0+1}-x^{k_0}\|\leq\sqrt{\frac{F(x^{k_0})-F(x^{k_0+1})}{a}}\leq\sqrt{\frac{F(x^{k_0})-\bar{F}}{a}}$$ and
$$\|x^{k_0+1}-\bar{x}\|\leq\|x^{k_0}-\bar{x}\|+\|x^{k_0}-x^{k_0+1}\|\leq\|x^{k_0}-\bar{x}\|+\sqrt{\frac{F(x^{k_0})-\bar{F}}{a}}<
\eta.~~(by~\eqref{eq:condition2})$$
Thus $x^{k_0+1}\in\mathfrak{B}(\bar{x};\eta,\nu)$. Now suppose that $x^i\in\mathfrak{B}(\bar{x};\eta,\nu)$ for $i=k_0+1,..,k_0+k$ and $x^{k_0+k}\neq x^{k_0+k+1}$. Note that $F(x^{k_0+1})>F(x^{k_0+2})>\cdots>F(x^{k_0+k})>F(x^{k_0+k+1})>\bar{F}$. We need to show that $x^{k_0+k+1}\in\mathfrak{B}(\bar{x};\eta,\nu)$. By the concavity of function $h(y)=y^{\frac{1}{2}}$, we have, for $i=k_0+1,k_0+2,\dots,k_0+k$, that
$$
\left(F(x^i)-\bar{F}\right)^{\frac{1}{2}}-\left(F(x^{i+1})-\bar{F}\right)^{\frac{1}{2}}
\geq \frac{1}{2}\frac{[F(x^i)-F(x^{i+1})]}{\left(F(x^i)-\bar{F}\right)^{\frac{1}{2}}}.$$
Recalling that $x^{i+1}\in T_{D^i,\epsilon^i}(x^i)$ and
applying (iii) of Proposition~\ref{prop:Ek}~and~\eqref{eq:VP_EB} of VP-EB to $[F(x^i)-F(x^{i+1})]$ and $(F(x^i)-\bar{F})^{1/2}$ respectively one has
$$\frac{2\sqrt{\kappa'}}{a}||x^i-x^{i-1}||[\left(F(x^i)-\bar{F}\right)^{\frac{1}{2}}-\left(F(x^{i+1})-\bar{F}\right)^{\frac{1}{2}}]\geq ||x^i-x^{i+1}||^2.$$
It follows from $2\sqrt{d_1 d_2}\leq d_1+d_2$ with nonnegative $d_1$ and $d_2$ that
\begin{eqnarray}\label{eq:34}
2\|x^{i+1}-x^i\|\leq\|x^{i}-x^{i-1}\|+\frac{2\sqrt{\kappa'}}{a}\left[\left(F(x^i)-\bar{F}\right)^{\frac{1}{2}}-\left(F(x^{i+1})-\bar{F}\right)^{\frac{1}{2}}\right].
\end{eqnarray}
Summing~\eqref{eq:34} for $i=k_0+1,...,k_0+k$, we obtain
\begin{eqnarray}\label{eq:51}
&&\sum_{i=k_0+1}^{k_0+k}\|x^{i+1}-x^i\|+\|x^{k_0+k+1}-x^{k_0+k}\|\nonumber\\
&\leq&\|x^{k_0+1}-x^{k_0}\|+\frac{2\sqrt{\kappa'}}{a}\left[\left(F(x^{k_0+1})-\bar{F}\right)^{\frac{1}{2}}-\left(F(x^{k_0+k+1})-\bar{F}\right)^{\frac{1}{2}}\right].
\end{eqnarray}
Using  \eqref{eq:51} along with the triangle inequality, we have
\begin{eqnarray*}
\|\bar{x}-x^{k_0+k+1}\|&\leq&\|\bar{x}-x^{k_0}\|+\|x^{k_0}-x^{k_0+1}\|+\sum_{i=k_0+1}^{k_0+k}\|x^{i+1}-x^i\|\\
                   &\leq&\|\bar{x}-x^{k_0}\|+2\|x^{k_0}-x^{k_0+1}\|+\frac{2\sqrt{\kappa'}}{a}\left[\left(F(x^{k_0+1})-\bar{F}\right)^{\frac{1}{2}}\right]\\
                   &\leq&\|\bar{x}-x^{k_0}\|+2\sqrt{\frac{F(x^{k_0})-\bar{F}}{a}}+\frac{2\sqrt{\kappa'}}{a}\left[\left(F(x^{k_0})-\bar{F}\right)^{\frac{1}{2}}\right]\\
&<&\eta.~~~(\mbox{by}~\eqref{eq:condition2})
\end{eqnarray*}
 This shows that $x^{k_0+k+1}\in\mathfrak{B}(\bar{x};\eta,\nu)$, and
(i) is proved by the Principle of Mathematical Induction.\\
\rm{(ii)} and \rm{(iii)}:
A direct consequence of \eqref{eq:51} is, for all $k$,
$$\sum_{i=k_0+1}^{k_0+k}\|x^{i+1}-x^i\|\leq\|x^{k_0+1}-x^{k_0}\|+\frac{2\sqrt{\kappa'}}{a}\left[\left(F(x^{k_0+1})-\bar{F}\right)^{\frac{1}{2}}\right]<+\infty.$$
Therefore, we have
$$\sum_{i=0}^{+\infty}\|x^{i+1}-x^i\|<+\infty.$$
In particular, this implies that the sequence $\{x^k\}$ is convergent and thus it actually converges to the point $\bar{x}$.  And $\bar{x}$ is a desired critical point of $F$ by Proposition~\ref{proposition4}.
\end{proof}
The main result of this section follows.
\begin{theorem}[Sufficient conditions for local linear convergence] \label{theo1} Suppose that all the conditions of Proposition \ref{prop:3.1} hold. Then $\{F(x^k)\}$ converges to value $\bar{F}$ at the $Q$-linear rate of convergence; that is, there are some $\beta\in(0,1)$ and $k_0$ such that
\begin{equation}\label{linearrate}
F(x^{k+1})-\bar{F}\leq\beta(F(x^k)-\bar{F})\;\forall k\geq k_0.
\end{equation}
Moreover, the sequence $\{x^k\}$  converges at the $R$-linear rate  to the critical point $\bar{x}$, which is either a limiting critical point or a proximal critical point of $F$.
\end{theorem}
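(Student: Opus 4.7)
The plan is to combine the per-step descent property of VBPG, already recorded as \eqref{eq:22} in Proposition~\ref{prop:Ek}(iii), with the VP-EB assumption, leveraging the fact that by Proposition~\ref{prop:3.1}(i) all iterates for $k\ge k_0$ live inside $\mathfrak{B}(\bar{x};\eta,\nu)$, so both inequalities are simultaneously in force. The descent inequality gives a lower bound for $\|x^k-x^{k+1}\|^2$ in terms of the function gap decrease, while VP-EB gives an upper bound for $F(x^{k+1})-\bar F$ in terms of the same quantity $\|x^k-x^{k+1}\|^2$. Chaining these two will directly yield a one-step contraction on $\{F(x^k)-\bar F\}$.

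Concretely, for $k\ge k_0$, VP-EB~\eqref{eq:VP_EB} gives $\|x^k-x^{k+1}\|^2\ge \kappa'^{-1}(F(x^{k+1})-\bar F)$, and the descent step~\eqref{eq:22} gives $a\|x^k-x^{k+1}\|^2\le F(x^k)-F(x^{k+1})$. Combining,
\begin{equation*}
\frac{a}{\kappa'}\bigl(F(x^{k+1})-\bar F\bigr)\le F(x^k)-F(x^{k+1}),
\end{equation*}
that is $(1+a/\kappa')(F(x^{k+1})-\bar F)\le F(x^k)-\bar F$, giving $Q$-linear convergence \eqref{linearrate} with the explicit modulus $\beta=\kappa'/(\kappa'+a)\in(0,1)$. (If at some step $x^k=x^{k+1}$, then by Proposition~\ref{prop:Fk}(ii) $x^k\in\bar{\mathbf X}_P$ and Proposition~\ref{proposition4}(i) makes the sequence stationary so the bound holds trivially.)

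For the $R$-linear rate of $\{x^k\}$, I would feed the just-proved geometric decay of $F(x^k)-\bar F$ back into the descent inequality. Since $a\|x^k-x^{k+1}\|^2\le F(x^k)-F(x^{k+1})\le F(x^k)-\bar F\le \beta^{\,k-k_0}(F(x^{k_0})-\bar F)$, we get $\|x^k-x^{k+1}\|\le C\,(\sqrt\beta)^{\,k-k_0}$ with $C=\sqrt{(F(x^{k_0})-\bar F)/a}$. Using the finite length property established in Proposition~\ref{prop:3.1}(ii) and the triangle inequality,
\begin{equation*}
\|x^k-\bar x\|\le\sum_{i=k}^{\infty}\|x^i-x^{i+1}\|\le\frac{C}{1-\sqrt\beta}(\sqrt\beta)^{\,k-k_0},
\end{equation*}
which is $R$-linear convergence to the proximal critical point $\bar x$ (the nature of $\bar x$ as a critical point coming from Proposition~\ref{proposition4}(ii) and Proposition~\ref{prop:3.1}(iii)).

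No genuine obstacle appears here: once Proposition~\ref{prop:3.1}(i) guarantees that VP-EB is applicable at every iterate beyond $k_0$, the argument is a short two-line linkage of descent and error bound, followed by a standard geometric-series tail estimate. The only thing to be careful about is the degenerate case $x^k=x^{k+1}$ (handled by Proposition~\ref{prop:Fk}(ii)) and verifying $\beta<1$, which is automatic since $a,\kappa'>0$.
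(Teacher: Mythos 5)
Your proposal is correct and follows essentially the same route as the paper: chain the descent inequality from Proposition~\ref{prop:Ek}(iii) with the VP-EB bound (valid for all $k\geq k_0$ by Proposition~\ref{prop:3.1}(i)) to get the contraction $F(x^{k+1})-\bar F\leq\frac{1}{1+a/\kappa'}(F(x^k)-\bar F)$, then feed the geometric decay back into the descent inequality and sum the tail to obtain the $R$-linear rate for $\{x^k\}$. Your modulus $\beta=\kappa'/(\kappa'+a)$ is identical to the paper's $\beta=1/(1+a/\kappa')$, and the remark about the degenerate case $x^k=x^{k+1}$ is a harmless extra precaution.
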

\begin{proof}
In view of Proposition~\ref{prop:3.1}, there is $k_0$ such that for $k\geq k_0$ such that $\{x^k\}\subset\mathfrak{B}(\bar{x};\eta,\nu)$. It follows that
\begin{eqnarray}
F(x^{k+1})-\bar{F}&=&\left(F(x^k)-\bar{F}\right)+\left(F(x^{k+1})-F(x^k)\right)\nonumber\\
                    &\leq&\left(F(x^k)-\bar{F}\right)-a\|x^{k+1}-x^k\|^2\quad\mbox{(by (iii) in Proposition~\ref{prop:Ek})}\nonumber\\
                    &\leq&\left(F(x^k)-\bar{F}\right)-a\left(\frac{1}{\kappa'}\right)\left(F(x^{k+1})-\bar{F}\right).\quad\mbox{(by VP-EB condition)}
\end{eqnarray}
Therefore, one has
\begin{eqnarray}
F(x^{k+1})-\bar{F}\leq\frac{1}{1+a\left(\frac{1}{\kappa'}\right)}\left(F(x^k)-\bar{F}\right)\hspace{3mm}\forall k\geq k_0.
\end{eqnarray}
The above estimation shows that $\{F(x^k)\}$ converges  to $\bar{F}$ at the  Q-linear rate; that is,
\begin{equation}\label{Q-linear}
F(x^{k+1})-\bar{F}\leq \beta\left(F(x^k)-\bar{F}\right)\hspace{3mm}\forall k\geq k_0,
\end{equation}
where $\beta:=\frac{1}{1+a\left(\frac{1}{\kappa'}\right)}\in (0,1)$. We now derive the R-linear rate of convergence of  $\{x^k\}$. We have
\begin{eqnarray*}
\|x^k-x^{k+1}\|^2
&\leq&\frac{1}{a}\bigg{[}\big{(}F(x^k)-\bar{F}\big{)}-\big{(}F(x^{k+1})-\bar{F}\big{)}\bigg{]}\quad\mbox{(by~\eqref{eq:22})}\\
&\leq&\frac{1}{a}\big{(}F(x^k)-\bar{F}\big{)}\\
&\leq&\frac{\beta^{(k-k_0)}}{a}(F(x^{k_0})-\bar{F}).~~\mbox{ (by (\ref{Q-linear}))}
\end{eqnarray*}
From the above inequality, we see that
$$\|x^k-x^{k+1}\|\leq\hat{M}(\sqrt{\beta})^{(k-k_0)}~~~ \forall k> k_0,\quad\hat{M}:=\sqrt{\frac{F(x^{k_0})-\bar{F}}{a}}.$$
By Proposition~\ref{prop:3.1}, we have $\{x^k\}$ converges to the critical point $\bar{x}$ and
$\|x^k-\bar{x}\|\leq\sum_{i=k}^{\infty}\|x^i-x^{i+1}\|\leq\frac{\hat{M}}{1-\sqrt{\beta}}(\sqrt{\beta})^{(k-k_0)}$. This
shows that $\{x^k\}$ converges to the desired critical point $\bar{x}$ at the R-linear rate; that is, $$\limsup_{k\rightarrow\infty}\sqrt[(k-k_0)]{\|x^k-\bar{x}\|}=\sqrt{\beta}<1.\qquad\qquad\qquad $$
This completes the proof. \end{proof}
\section{Level-set based error bounds and necessary and sufficient conditions for linear convergence of VBPG}\label{sec:LSEB}
\subsection{Level-set subdifferential EB implies VP-EB}
In the rest of this paper, unless otherwise stated, we will always choose $\bar{F}=F(\bar{x})$ for some
given $\bar{x}\in{\bf dom}F$. Set $[F\leq\bar{F}]=\{x\in\RR^n:F(x)\leq F(\bar{x})\}$ and $[F>\bar{F}]=\{x\in\RR^n:F(x)>F(\bar{x})\}$. In this subsection, we examine level-set subdifferential  and level-set Bregman proximal error bounds.
\begin{definition}[Level-set error bounds]\label{LSEB}
\begin{itemize}
\item[(i)] ({\bf Level-set subdifferential error bound}) The  function $F$  is said to satisfy the level-set subdifferential error bound (EB) condition at $\bar{x}$ with exponent $\gamma>0$ if there exist $\eta>0$, $\nu>0$, and $c_1>0$ such that the following inequality holds:
\begin{equation}\label{LS-EB}
dist^{\gamma}(x,[F\leq\bar{F}])\leq c_1dist\big{(}0,\partial_P F(x)\big{)}~~\forall
x\in\mathfrak{B}(\bar{x};\eta,\nu).
\end{equation}
\item[(ii)]({\bf Level-set Bregman proximal error bound}) Given a Bregman function $D$ along with $\epsilon>0$, $F$ is said to satisfy the level-set Bregman proximal EB condition at $\bar{x}$ with exponent $p>0$ , if there exist $\eta>0$, $\nu>0$, and
$\theta>0$ such that the following inequality holds:
\begin{equation}\label{LB-EB}
dist^p(x,[F\leq\bar{F}])\leq\theta dist\left(x,T_{D,\epsilon}(x)\right)~~\forall x\in\mathfrak{B}(\bar{x};\eta,\nu).
\end{equation}
\end{itemize}
\end{definition}

\begin{proposition}[Level-set subdifferential EB implies level-set Bregman proximal EB]\label{theo:4.2}
Suppose Assumptions~\ref{assump1} and~\ref{assump2} hold with $\overline{\epsilon}<\frac{m}{L}$. Assume the level-set subdifferential EB holds at $\bar{x}$ with exponent $\gamma\in(0,\infty)$ over $\mathfrak{B}(\bar{x};\eta,\nu)$.
Then there are $N>\max\{\frac{2\overline{\epsilon}\nu}{m-\overline{\epsilon}L}/(\frac{\eta}{2})^2,1\}$ and $\theta>0$ such that
\begin{equation}
dist^p(x,[F\leq\bar{F}])\leq\theta dist\left(x,T_{D,\epsilon}(x)\right)\;\forall x\in\mathfrak{B}(\bar{x},\frac{\eta}{2},\frac{\nu}{N}),
\end{equation}
where $p=\frac{1}{\min\{\frac{1}{\gamma},1\}}$, $\theta_1=1+c_1^{\frac{1}{\gamma}}(L+\frac{M}{\underline{\epsilon}})^{\frac{1}{\gamma}}(\frac{\eta}{2})^{\frac{1}{\gamma}-1}$,
$\theta_2=(\frac{\eta}{2})^{1-\frac{1}{\gamma}}+c_1^{\frac{1}{\gamma}}(L+\frac{M}{\underline{\epsilon}})^{\frac{1}{\gamma}}$
 and
$\theta=\max\{\theta_1,\theta_2\}$.
\end{proposition}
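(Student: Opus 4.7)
The plan is to fix $x\in\mathfrak{B}(\bar{x};\eta/2,\nu/N)$, pick any $y\in T_{D,\epsilon}(x)$, and transfer the hypothesized subdifferential bound from $y$ back to $x$ using the two workhorse estimates already established: the descent inequality
$F(y)\le F(x)-a\|x-y\|^{2}$ with $a=\tfrac12(m/\epsilon-L)>0$ from Proposition~\ref{prop:Ek}(iii), and the subdifferential estimate
$\mathrm{dist}\bigl(0,\partial_{P}F(y)\bigr)\le (L+M/\underline\epsilon)\,\|x-y\|$ from Proposition~\ref{prop:Fk}(i). The parameter $N$ is chosen precisely so that these two estimates keep $y$ inside $\mathfrak{B}(\bar{x};\eta,\nu)$, where the hypothesis applies to $y$.

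First I would split into two cases. If $F(y)\le\bar F$, then $y\in[F\le\bar F]$ and
$\mathrm{dist}(x,[F\le\bar F])\le\|x-y\|\le\mathrm{dist}(x,T_{D,\epsilon}(x))$, so any exponent $p\ge 1$ suffices trivially. If instead $F(y)>\bar F$, I need to verify that $y\in\mathfrak{B}(\bar{x};\eta,\nu)$. The descent inequality together with $F(x)-\bar F<\nu/N$ gives $\|x-y\|^{2}\le (F(x)-F(y))/a\le \nu/(aN)$; using $a\ge (m-\overline\epsilon L)/(2\overline\epsilon)$, the prescribed lower bound on $N$ forces $\|x-y\|\le\eta/2$, hence $\|y-\bar x\|\le\eta$. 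Moreover $\bar F<F(y)\le F(x)<\bar F+\nu/N<\bar F+\nu$. Thus $y\in\mathfrak{B}(\bar{x};\eta,\nu)$ and the level-set subdifferential EB yields
\[
\mathrm{dist}^{\gamma}(y,[F\le\bar F])\le c_{1}\,\mathrm{dist}\bigl(0,\partial_{P}F(y)\bigr)\le c_{1}(L+M/\underline\epsilon)\,\|x-y\|.
\]

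Next I would pass from $y$ to $x$ via the triangle inequality
$\mathrm{dist}(x,[F\le\bar F])\le\|x-y\|+\mathrm{dist}(y,[F\le\bar F])$
and bound the second summand by $c_{1}^{1/\gamma}(L+M/\underline\epsilon)^{1/\gamma}\|x-y\|^{1/\gamma}$. The remaining step is purely algebraic: I want to express the right-hand side as a single constant times some power of $\|x-y\|$. Using $\|x-y\|\le\eta/2$, if $\gamma\le 1$ I bound $\|x-y\|^{1/\gamma}\le(\eta/2)^{1/\gamma-1}\|x-y\|$, obtaining $\mathrm{dist}(x,[F\le\bar F])\le\theta_{1}\|x-y\|$; if $\gamma>1$ I bound $\|x-y\|\le(\eta/2)^{1-1/\gamma}\|x-y\|^{1/\gamma}$, obtaining $\mathrm{dist}(x,[F\le\bar F])\le\theta_{2}\|x-y\|^{1/\gamma}$. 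Raising to the appropriate power $p=1/\min\{1/\gamma,1\}$ and using $\|x-y\|\ge\mathrm{dist}(x,T_{D,\epsilon}(x))$ for the chosen selector (with equality if single-valued, otherwise taking the infimum over $y\in T_{D,\epsilon}(x)$) delivers the conclusion.

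The main obstacle, as usual with these implications between error bounds, is the bookkeeping: ensuring the ``buffer'' radii $\eta/2$ and $\nu/N$ are small enough to push $y=t_{D,\epsilon}(x)$ back into $\mathfrak{B}(\bar x;\eta,\nu)$ so that the hypothesis activates, while tracking the exponent flip between $\gamma\le 1$ and $\gamma>1$. The choice of $N$ in the statement is exactly dictated by the descent estimate applied to the function-value buffer, which is the one nontrivial quantitative ingredient; everything else is triangle inequalities and monotone rescaling of $\|x-y\|$.
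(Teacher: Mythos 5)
Your proposal is correct and follows essentially the same route as the paper's proof: project onto $T_{D,\epsilon}(x)$, split on whether $F$ at that point already drops below $\bar F$, use Proposition~\ref{prop:Ek}(iii) with the choice of $N$ to force the proximal point back into $\mathfrak{B}(\bar x;\eta,\nu)$, apply the level-set subdifferential EB there together with Proposition~\ref{prop:Fk}(i), and finish with the triangle inequality and the $\gamma\le 1$ versus $\gamma>1$ rescaling that produces $\theta_1$ and $\theta_2$. Your explicit check that $\bar F<F(y)\le F(x)<\bar F+\nu$ is a detail the paper's own proof leaves implicit, but the argument is otherwise identical.
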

\begin{proof}
By assumptions, $\overline{\epsilon}<m/L$, $T_{D,\epsilon}(x)\not=\emptyset$. For $x\in\mathfrak{B}(\bar{x};\frac{\eta}{2},\frac{\nu}{N})$, let $t_p(x)\in Proj_{T_{D,\epsilon}(x)}(x)$. If $F\big{(}t_{p}(x)\big{)}\leq\bar{F}$, then $t_{p}(x)\in[F\leq\bar{F}]$, and we have $$dist(x,[F\leq\bar{F}])\leq\|x-t_{p}(x)\|.$$
Now we consider the non-trivial case $F\big{(}t_{p}(x)\big{)}>\bar{F}$.
If  $x\in \mathfrak{B}(\bar{x};\frac{\eta}{2},\frac{\nu}{N})\subset\mathfrak{B}(\bar{x};\eta,\nu)$ with
$N$ satisfying the assumptions, then by (iii) of Proposition~\ref{prop:Ek}, we have that
$$\frac{1}{2}\left(\frac{m}{\overline{\epsilon}}-L\right)\|x-t_{p}(x)\|^2\leq F(x)-F\big{(}t_{p}(x)\big{)}\leq F(x)-\bar{F}\leq\frac{\nu}{N}.$$
Since $N>\frac{2\overline{\epsilon}\nu}{m-\overline{\epsilon}L}/(\frac{\eta}{2})^2$,  we get $\sqrt{\frac{2\nu\overline{\epsilon}}{N(m-\overline{\epsilon}L)}}<\frac{\eta}{2}$ and
$\|x-t_{p}(x)\|<\frac{\eta}{2}$.
As $\| x-\bar{x}\|\leq\frac{\eta}{2}$,
it follows that $\|t_{p}(x)-\bar{x}\|<\eta$, which yields $t_{p}(x)\in\mathbb{B}(\bar{x};\eta)$.
Hence for any $x\in\mathfrak{B}(\bar{x};\frac{\eta}{2},\frac{\nu}{N})$, by the level-set subdifferential EB we have
\[
dist(x,[F\leq\bar{F}])\leq\|x-t_{p}(x)\|+dist\big{(}t_{p}(x),[F\leq\bar{F}]\big{)}
=\|x-t_{p}(x)\|+c_1^{\frac{1}{\gamma}}dist^{\frac{1}{\gamma}}\big{(}0,\partial_PF\big{(}t_{p}(x)\big{)}\big{)}.
\]
Therefore,  for any $x\in\mathfrak{B}(\bar{x};\frac{\eta}{2},\frac{\nu}{N})$,
by (i) of Proposition~\ref{prop:Fk}, we have that
\begin{eqnarray*}
dist(x,[F\leq\bar{F}])\leq\|x-t_{p}(x)\|+
c_1^{\frac{1}{\gamma}}(L+\frac{M}{\underline{\epsilon}})^{\frac{1}{\gamma}}\|x-t_{p}(x)\|^{\frac{1}{\gamma}}.
\end{eqnarray*}
Since $\|x-t_{p}(x)\|<\frac{\eta}{2}$, by the above inequality, we have the following
estimate
\begin{eqnarray*}
    dist(x,[F\leq\bar{F}])&\leq&\left\{
\begin{array}{ll}
\theta_1\|x-t_{p}(x)\| &\mbox{if}\quad 0<\gamma\leq 1, \\
\theta_2\|x-t_{p}(x)\|^{\frac{1}{\gamma}} &\mbox{if}\quad \gamma>1 \\
\end{array}
\right.\\
&\leq&\theta\|x-t_{p}(x)\|^{\frac{1}{p}}=\theta dist^{\frac{1}{p}}\left(x, T_{D,\epsilon}(x)\right).
\end{eqnarray*}
This completes the proof.
\end{proof}
Next proposition shows that level-set subdifferential EB plays a key role for linear convergence of VBPG.
\begin{proposition}[Level-set subdifferential EB implies VP-EB]
Suppose that the Assumptions~\ref{assump1} and~\ref{assump2} hold. Let $\{x^k\}$ be the sequence generated by VBPG method and $\bar{x}$ be an accumulation point of $\{x^k\}$. Assume that the level-set subdifferential EB holds at the point $\bar{x}$ with exponent $\gamma\in(0,1]$, $\eta>0$ and $\nu>0$. Then there exists $k_0'>0$ such that VP-EB holds at $\bar{x}$ for all $k\geq k_0'$.
\end{proposition}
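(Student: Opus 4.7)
The plan is to chain together three ingredients already proved in the excerpt: the level-set subdifferential error bound at $x^{k+1}$, the a priori bound of Proposition~\ref{prop:Fk}(i) relating $dist(0,\partial_PF(x^{k+1}))$ to the successive-iterate gap $\|x^k-x^{k+1}\|$, and the cost-to-go inequality from the remark following Lemma~\ref{lemma:1}. The first two ingredients will convert the distance to the level set $[F\leq\bar{F}]$ into a multiple of $\|x^k-x^{k+1}\|$; the third will turn that geometric information into the quadratic value estimate $F(x^{k+1})-\bar F\leq\kappa'\|x^k-x^{k+1}\|^2$ required by VP-EB.

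\emph{Step 1 (from subdifferential to level-set distance).} Fix an iterate with $x^{k+1}\in\mathfrak{B}(\bar{x};\eta,\nu)$ (the cases with $F(x^{k+1})\leq\bar F$ lie outside $\mathfrak{B}$ and are vacuous). The level-set subdifferential EB gives $dist(x^{k+1},[F\leq\bar F])^\gamma\leq c_1\,dist(0,\partial_PF(x^{k+1}))$, while Proposition~\ref{prop:Fk}(i) applied to the pair $(x^k,x^{k+1})$ with $x^{k+1}\in T_{D^k,\epsilon^k}(x^k)$ yields
\[
dist\bigl(0,\partial_PF(x^{k+1})\bigr)\leq\bigl(L+M/\underline{\epsilon}\bigr)\|x^k-x^{k+1}\|.
\]
By Proposition~\ref{proposition4}(ii) one has $\|x^k-x^{k+1}\|\to 0$, so pick $k_0'$ with $\|x^k-x^{k+1}\|\leq 1$ for $k\geq k_0'$. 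Since $\gamma\in(0,1]$, $1/\gamma\geq 1$, hence raising the composed bound to the power $1/\gamma$ produces
\[
dist(x^{k+1},[F\leq\bar F])\leq C\|x^k-x^{k+1}\|,\qquad C:=c_1^{1/\gamma}(L+M/\underline{\epsilon})^{1/\gamma},
\]
valid for all $k\geq k_0'$ such that $x^{k+1}\in\mathfrak{B}(\bar x;\eta,\nu)$.

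\emph{Step 2 (from level-set distance to value gap).} The set $[F\leq\bar F]$ is closed (as $F$ is l.s.c.) and nonempty (it contains $\bar x$), so the projection $z^*$ of $x^{k+1}$ onto it exists with $F(z^*)\leq\bar F$ and $\|z^*-x^{k+1}\|=dist(x^{k+1},[F\leq\bar F])$. Apply the cost-to-go inequality of the remark after Lemma~\ref{lemma:1} with $x=x^k$, $u=z^*$, $t_{D,\epsilon}(x)=x^{k+1}$:
\[
F(x^{k+1})-F(z^*)\leq\kappa\bigl(\|z^*-x^{k+1}\|^2+\|x^k-x^{k+1}\|^2\bigr).
\]
Combining with Step~1 and $F(z^*)\leq\bar F$ delivers
\[
F(x^{k+1})-\bar F\leq\kappa(C^2+1)\|x^k-x^{k+1}\|^2,
\]
which is exactly the VP-EB with $\kappa':=\kappa(C^2+1)$ and the same $\eta,\nu$ furnished by the hypothesis.

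\emph{Main obstacle.} The only delicate point is that $\gamma$ is merely bounded above by $1$, so the exponent $1/\gamma$ may strictly exceed $1$ and the raw level-set bound contains $\|x^k-x^{k+1}\|^{1/\gamma}$, not $\|x^k-x^{k+1}\|$. Dominating this higher power by the first power requires the successive-iterate gap to be at most $1$, which is precisely why the threshold $k_0'$ is needed and why the conclusion is only asymptotic. The restriction $\gamma\leq 1$ is therefore essential at this step: once $\gamma>1$ the absorbed power $1/\gamma<1$ would dominate $\|x^k-x^{k+1}\|$, and a quadratic VP-EB of the form above would no longer follow from this argument.
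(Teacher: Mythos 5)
Your proof is correct and follows essentially the same route as the paper's: both combine the level-set subdifferential EB with Proposition~\ref{prop:Fk}(i) to bound $dist(x^{k+1},[F\leq\bar F])$ by $\|x^k-x^{k+1}\|$ (using $\|x^k-x^{k+1}\|\leq 1$ for $k\geq k_0'$ to absorb the exponent $1/\gamma\geq 1$), and then invoke the cost-to-go inequality with $u$ the projection of $x^{k+1}$ onto $[F\leq\bar F]$. The only difference is cosmetic: your constant $\kappa'=\kappa(C^2+1)$ versus the paper's $\max\{C^2\kappa,\kappa\}$.
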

\begin{proof}
Let $x_p^{k+1}\in[F\leq\bar{F}]$ such that $\|x^{k+1}-x_p^{k+1}\|=dist(x^{k+1},[F\leq\bar{F}])$. From the cost-to-go inequality in Lemma 2.1 with $u=x_p^{k+1}$, we have
\begin{equation}\label{eq:1}
F(x^{k+1})-F(x_p^{k+1})\leq\kappa(\|x_p^{k+1}-x^{k+1}\|^2+\|x^k-x^{k+1}\|^2).
\end{equation}
The VBPG update process implies that there is $k_0'>0$ such that for $k\geq k_0'$, we have $\|x^k-x^{k+1}\|\leq1$. Since level-set subdifferential EB holds at $\bar{x}$ with $\gamma\in(0,1]$, $\eta,\nu>0$, for $x^{k+1}\in\mathfrak{B}(\bar{x};\eta,\nu)$ and $k\geq k_0'$, we have
\begin{eqnarray}\label{eq:2}
\|x^{k+1}-x_p^{k+1}\|^2&=&dist^2(x^{k+1},[F\leq\bar{F}])\nonumber\\
&\leq&(c_1)^{\frac{2}{\gamma}}dist^{\frac{2}{\gamma}}\left(0,\partial_PF(x^{k+1})\right)\nonumber\\
&\leq&(c_1)^{\frac{2}{\gamma}}(L+\frac{M}{\underline{\epsilon}})^{\frac{2}{\gamma}}\|x^k-x^{k+1}\|^{\frac{2}{\gamma}}\qquad\mbox{(by (i) of Proposition 2.2)}\nonumber\\
&\leq&(c_1)^{\frac{2}{\gamma}}(L+\frac{M}{\underline{\epsilon}})^{\frac{2}{\gamma}}\|x^k-x^{k+1}\|^2.\qquad\mbox{(since $\gamma\in(0,1]$, $\|x^k-x^{k+1}\|\leq1$)}
\end{eqnarray}
Using~\eqref{eq:1} and~\eqref{eq:2}, it yields
$$F(x^{k+1})-\bar{F}\leq F(x^{k+1})-F(x_p^{k+1})\leq\kappa'\|x^k-x^{k+1}\|^2\quad\mbox{($\kappa'=\max\{(c_1)^{\frac{2}{\gamma}}(L+\frac{M}{\underline{\epsilon}})^{\frac{2}{\gamma}}\kappa,\kappa\}$)}$$
which implies the claim.
\end{proof}
\subsection{The strong level-set error bounds and necessary and sufficient conditions for linear convergence}
We now turn to study the notion of the strong level-set error bounds holding on a  set $[\bar{F}< F\leq\bar{F}+\nu]$. This notion plays an important role in deriving
a sufficient condition and a necessary condition for linear convergence relative to level sets.
\begin{definition}[Strong level-set error bounds]
\begin{itemize}
\item[(i)]({\bf Strong level-set subdifferential error bound}) We say that $F$ satisfies the strong level-set subdifferential EB condition on $[\bar{F}<F<\bar{F}+\nu]$ with the value $\bar{F}$ and $\nu>0$ if there exists $c_1'>0$ such that
\[
dist(x,[F\leq\bar{F}])\leq c_1'dist\big{(}0,\partial_P F(x)\big{)}~~\forall x\in[\bar{F}<F<\bar{F}+\nu].
\]
\item[(ii)]({\bf Strong level-set Bregman proximal error bound}) Given a Bregman distance $D$ along with $\epsilon>0$, $F$ satisfies the strong level-set Bregman proximal EB condition on $[\bar{F}<F<\bar{F}+\nu]$ with $\bar{F}$ and $\nu>0$ if there exists $\theta'>0$ such that
\[
dist(x,[F\leq\bar{F}])\leq\theta'dist\left(x,T_{D,\epsilon}(x)\right)~~\forall x\in[\bar{F}<F<\bar{F}+\nu].
\]
\end{itemize}
\end{definition}
\begin{corollary}[Strong level-set subdifferential EB $\Rightarrow$ Strong level-set Bregman proximal EB]\label{Corollary:Strong}
Suppose Assumptions~\ref{assump1} and~\ref{assump2} hold with $\overline{\epsilon}<\frac{m}{L}$. Assume the strong level-set subdifferential EB holds over $[\bar{F}<F<\bar{F}+\nu]$. Then there is $\theta'=1+c_1'(L+\frac{M}{\underline{\epsilon}})>0$ such that
\[
dist(x,[F\leq\bar{F}])\leq\theta'dist\left(x,T_{D,\epsilon}(x)\right)~~\forall x\in[\bar{F}<F<\bar{F}+\nu].
\]
\end{corollary}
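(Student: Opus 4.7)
The plan is to mirror the proof of Proposition~\ref{theo:4.2} at the linear exponent $\gamma=1$, but to exploit the fact that the \emph{strong} hypothesis dispenses with a localizing ball and only requires points to lie in the strict upper level set $[\bar F<F<\bar F+\nu]$. Fix an arbitrary $x\in[\bar F<F<\bar F+\nu]$; since $\overline{\epsilon}<m/L$, Assumptions~\ref{assump1} and~\ref{assump2} give that $T_{D,\epsilon}(x)$ is nonempty, so I can pick $t_p(x)\in\mathrm{Proj}_{T_{D,\epsilon}(x)}(x)$, which satisfies $\|x-t_p(x)\|=dist(x,T_{D,\epsilon}(x))$.

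I would then split into the same two cases used in Proposition~\ref{theo:4.2}. In the first case, when $F(t_p(x))\le \bar F$, the point $t_p(x)$ itself lies in $[F\le\bar F]$, so $dist(x,[F\le\bar F])\le\|x-t_p(x)\|$ and we are done even with constant $1$. In the second, nontrivial case $F(t_p(x))>\bar F$, invoke the descent property (iii) of Proposition~\ref{prop:Ek} to get $F(t_p(x))\le F(x)<\bar F+\nu$, so that $t_p(x)$ still belongs to $[\bar F<F<\bar F+\nu]$. This is the one step where the ``strong'' form pays off: no radius bookkeeping is required. Applying the strong level-set subdifferential EB at $t_p(x)$ gives
\[
dist\bigl(t_p(x),[F\le\bar F]\bigr)\le c_1'\,dist\bigl(0,\partial_P F(t_p(x))\bigr),
\]
and then (i) of Proposition~\ref{prop:Fk} bounds the right-hand side by $c_1'(L+M/\underline{\epsilon})\|x-t_p(x)\|$.

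I would close by combining the two cases through the triangle inequality,
\[
dist(x,[F\le\bar F])\le\|x-t_p(x)\|+dist\bigl(t_p(x),[F\le\bar F]\bigr)\le\bigl(1+c_1'(L+M/\underline{\epsilon})\bigr)\|x-t_p(x)\|,
\]
and recognizing $\|x-t_p(x)\|=dist(x,T_{D,\epsilon}(x))$ to read off $\theta'=1+c_1'(L+M/\underline{\epsilon})$ as stated.

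The main obstacle in the analogous Proposition~\ref{theo:4.2} was controlling where $t_p(x)$ lands, which forced the introduction of the constant $N$ and the shrunken region $\mathfrak{B}(\bar x;\eta/2,\nu/N)$. In the present corollary that obstacle evaporates: the strong hypothesis is global on $[\bar F<F<\bar F+\nu]$, and the monotonicity $F(t_p(x))\le F(x)$ supplied by the descent property automatically keeps $t_p(x)$ in that same strict upper level set. Consequently the proof is little more than a one-step triangle inequality, and the exponent collapse from $p$ to $1$ is exactly what eliminates the two-sided $\max\{\theta_1,\theta_2\}$ expression in favor of the single constant $1+c_1'(L+M/\underline{\epsilon})$.
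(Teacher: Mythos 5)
Your proof is correct and is exactly what the paper does: the paper's own proof of this corollary is a one-line appeal to the argument of Proposition~\ref{theo:4.2} with $\gamma=1$, and your write-up is a faithful unwinding of that argument, correctly observing that the descent inequality $F(t_p(x))\le F(x)$ keeps $t_p(x)$ in $[\bar F<F<\bar F+\nu]$ so that no radius bookkeeping (the constant $N$ and the shrunken ball) is needed. The resulting constant $\theta'=1+c_1'(L+M/\underline{\epsilon})$ matches the statement.
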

\begin{proof}
The claim is proved by the same argument for the proof of Proposition~\ref{theo:4.2} with $\gamma=1$.
\end{proof}
The following theorem gives a necessary condition and a sufficient condition for linear convergence relative to a level set.
\begin{theorem}({\bf Necessary and sufficient conditions for linear convergence relative to $[F\leq\bar{F}]$})\label{theo:n-s}
Suppose that Assumptions~\ref{assump1} and~\ref{assump2} hold. Let a sequence $\{x^k\}$ be generated by the VBPG method, let $\bar{x}$ be an accumulation point of $\{x^k\}$, and let $\nu>0$ be given.
\begin{itemize}
\item[{\rm(i)}] For any  initial point $x^0\in[\bar{F}<F<\bar{F}+\nu]$,  if the strong Bregman proximal EB condition holds  on $[\bar{F}<F<\bar{F}+\nu]$ with $\theta'\in\left(\sqrt{\frac{\mathfrak{c}}{\mathfrak{b}}},\sqrt{\frac{\mathfrak{c}}{\mathfrak{b}-1}}\right)$, then the VBPG method converges linearly respect to level-set $[F\leq\bar{F}]$, i.e.,
\begin{eqnarray}\label{x-Q-linear}
dist\left(x^{k+1},[F\leq\bar{F}]\right)\leq\beta dist\left(x^{k},[F\leq\bar{F}]\right)\quad \forall k\geq0,
\end{eqnarray}
with $\beta:=\sqrt{\mathfrak{b}-\frac{\mathfrak{c}}{(\theta')^2}}\in(0,1)$, where the values of $\mathfrak{b}$ and $\mathfrak{c}$ are appeared in Lemma~\ref{lemma:1}.
\item[{\rm(ii)}] If $g$ is semi-convex on $\RR^n$, $\overline{\epsilon}<\min\{\frac{m}{L},\frac{m}{\rho}\}$ and the VBPG method converges linearly in the sense of~\eqref{x-Q-linear} with $\beta\in (0,1)$, then $F$ satisfies the strong level-set subdifferential EB condition on $[\bar{F}<F<\bar{F}+\nu]$ with $c_1'=\frac{\overline{\epsilon}}{(1-\beta)(m-\overline{\epsilon}\rho)}\sqrt{\frac{\overline{\epsilon}}{\underline{\epsilon}}}$ and strong level-set Bregman proximal EB with $\theta'=1+c_1'(L+\frac{M}{\underline{\epsilon}})$.
\end{itemize}
\end{theorem}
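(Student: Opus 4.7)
The plan is to prove the two directions by specializing the generalized descent inequality of Lemma~\ref{lemma:1} with a judicious choice of test point and then using Proposition~\ref{prop:Gk}(iv) to pass between the Bregman-proximal mapping and the subdifferential.

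\emph{Part (i): Sufficient condition.} Fix $k$. If $F(x^{k+1})\leq\bar F$, then $dist(x^{k+1},[F\leq\bar F])=0$ and (\ref{x-Q-linear}) is immediate. Otherwise $\bar F<F(x^{k+1})\leq F(x^k)<\bar F+\nu$ by the descent property in Proposition~\ref{prop:Ek}(iii) and induction on $k$, so both $x^k$ and $x^{k+1}$ lie in $[\bar F<F<\bar F+\nu]$. I would apply Lemma~\ref{lemma:1} with $u=u^k\in\mathrm{proj}_{[F\leq\bar F]}(x^k)$; since $F(u^k)\leq\bar F<F(x^{k+1})$, the left-hand side of (\ref{eq:descent}) is positive and can be discarded, yielding
\[
dist^2(x^{k+1},[F\leq\bar F])\leq\|u^k-x^{k+1}\|^2\leq\mathfrak{b}\,dist^2(x^k,[F\leq\bar F])-\mathfrak{c}\,\|x^k-x^{k+1}\|^2,
\]
using $\|u^k-x^k\|=dist(x^k,[F\leq\bar F])$. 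Because $x^{k+1}=T_{D,\epsilon}(x^k)$ and $x^k\in[\bar F<F<\bar F+\nu]$, the strong level-set Bregman proximal EB gives $\|x^k-x^{k+1}\|\geq dist(x^k,[F\leq\bar F])/\theta'$. Substituting and factoring out $dist^2(x^k,[F\leq\bar F])$ produces $\beta^2=\mathfrak{b}-\mathfrak{c}/(\theta')^2$; the stated range for $\theta'$ is exactly what is required to force $\beta\in(0,1)$ (the lower bound makes $\beta^2$ positive, the upper bound makes $\beta^2<1$, using the fact $\mathfrak{b}>1$ which follows from $\mathfrak{b}=\tfrac{M}{\underline\epsilon}+2+3L$).

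\emph{Part (ii): Necessary condition.} Fix $x\in[\bar F<F<\bar F+\nu]$, run one VBPG step from $x^0=x$, and let $x^1=T_{D,\epsilon}(x^0)$, which is single-valued by Proposition~\ref{prop:singlevalue} under the semi-convexity hypothesis on $g$ and the bound $\overline\epsilon<\min\{m/L,m/\rho\}$. The assumed linear contraction (\ref{x-Q-linear}) together with the triangle inequality
\[
dist(x,[F\leq\bar F])\leq\|x-x^1\|+dist(x^1,[F\leq\bar F])\leq\|x-x^1\|+\beta\,dist(x,[F\leq\bar F])
\]
rearranges to $(1-\beta)\,dist(x,[F\leq\bar F])\leq\|x-T_{D,\epsilon}(x)\|$. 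Plugging this into Proposition~\ref{prop:Gk}(iv) converts it into the strong level-set subdifferential EB with the advertised $c_1'=\tfrac{\overline\epsilon}{(1-\beta)(m-\overline\epsilon\rho)}\sqrt{\overline\epsilon/\underline\epsilon}$, and then Corollary~\ref{Corollary:Strong} delivers the strong level-set Bregman proximal EB with $\theta'=1+c_1'(L+M/\underline\epsilon)$.

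\emph{Main obstacle.} The crux of the argument is identifying the right substitution $u=\mathrm{proj}_{[F\leq\bar F]}(x^k)$ in Lemma~\ref{lemma:1} so that dropping the positive left-hand side loses nothing essential and the three remaining terms reassemble into squared distances to the level set; the algebraic bookkeeping needed to recover precisely the admissible range $(\sqrt{\mathfrak{c}/\mathfrak{b}},\sqrt{\mathfrak{c}/(\mathfrak{b}-1)})$ for $\theta'$ is a secondary delicate point, requiring one to keep track of both the sign and the magnitude of $\mathfrak{b}-\mathfrak{c}/(\theta')^2$. The converse direction (ii) is routine once one observes that a single VBPG step combined with Proposition~\ref{prop:Gk}(iv) is enough, so the semi-convexity hypothesis enters only through single-valuedness and the quantitative control of $\|x-T_{D,\epsilon}(x)\|$ by the subdifferential.
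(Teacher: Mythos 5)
Your proposal is correct and follows essentially the same route as the paper: part (i) applies Lemma~\ref{lemma:1} with $u$ the projection of $x^k$ onto $[F\leq\bar F]$, drops the nonnegative left-hand side, and invokes the strong Bregman proximal EB to absorb $\|x^k-x^{k+1}\|^2$; part (ii) uses the one-step contraction plus the triangle inequality to get $(1-\beta)\,dist(x,[F\leq\bar F])\leq dist(x,T_{D,\epsilon}(x))$ and then Proposition~\ref{prop:Gk}(iv) and Corollary~\ref{Corollary:Strong}. Your explicit handling of the case $F(x^{k+1})\leq\bar F$ and the sign analysis for the admissible range of $\theta'$ are minor refinements of details the paper treats more tersely, not a different argument.
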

\begin{proof}
(i) Since $\{F(x^k)\}$ is strictly decreasing and converges to $F_{\zeta}\geq\bar{F}$, then we must have $F(x^k)\geq\bar{F}$. Note that the equality implies $x^k\in\bar{x}_P$. Therefore, for given $x^0\in[\bar{F}<F<\bar{F}+\nu]$, we have $x^k\in[\bar{F}<F<\bar{F}+\nu]$. Let $x_p^k={Proj}_{[F\leq\bar{F}]}(x^k)$. Then $F(x_p^k)\leq\bar{F}$. By Lemma~\ref{lemma:1} with  $u=x_p^k$ in~\eqref{eq:descent}, we have $F(x^{k+1})\geq \bar{F}$ and
    \[
    0\leq\mathfrak{a}[F(x^{k+1})-F(x_p^k)]\leq\mathfrak{b}\|x_p^k-x^k\|^2-\|x_p^k-x^{k+1}\|^2-\mathfrak{c}\|x^k-x^{k+1}\|^2.
    \]
This together with strong Bregman proximal EB condition yields
    \[
    \|x_p^k-x^{k+1}\|^2\leq\mathfrak{b}\|x_p^k-x^k\|^2-\mathfrak{c}\|x^k-x^{k+1}\|^2\leq\mathfrak{b}\|x_p^k-x^k\|^2-\frac{\mathfrak{c}}{(\theta')^2}\|x_p^k-x^k\|^2.
    \]
Thus, one has
\[
    dist\left(x^{k+1},[F\leq\bar{F}]\right)\leq\|x_p^k-x^{k+1}\|\leq\left(\mathfrak{b}-\frac{\mathfrak{c}}{(\theta')^2}\right)^{\frac{1}{2}}dist\left(x^k,[F\leq\bar{F}]\right).
\]
(ii) By semi-convexity of $g$ and $\overline{\epsilon}<\min\{\frac{m}{L},\frac{m}{\rho}\}$, it follows from Proposition \ref{prop:singlevalue} that $T_{D,\epsilon}(x)$ is single-valued. Let $T_{D,\epsilon}(x)_p=Proj_{[F\leq\bar{F}]}\left(T_{D,\epsilon}(x)\right)$.  Then we see that
    \begin{eqnarray}\label{TD}
    dist\left(x, [F\leq\bar{F}]\right)&\leq&\|x-T_{D,\epsilon}(x)_p\|\nonumber\\
                                           &\leq&\|T_{D,\epsilon}(x)-T_{D,\epsilon}(x)_p\|+\|x-T_{D,\epsilon}(x)\|\nonumber\\
                                           &=&dist\left(T_{D,\epsilon}(x),[F\leq\bar{F}]\right)+dist\left(x,T_{D,\epsilon}(x)\right)\nonumber\\
                                           &\leq&\beta dist\left(x,[F\leq\bar{F}]\right)+dist\left(x,T_{D,\epsilon}(x)\right).\nonumber
    \end{eqnarray}
    By the statement (iv) of Proposition~\ref{prop:Gk}, we have
\[
dist\left(x,[F\leq\bar{F}]\right)\leq\frac{1}{(1-\beta)}dist\left(x,T_{D,\epsilon}(x)\right)\leq\frac{\overline{\epsilon}}{(1-\beta)(m-\overline{\epsilon}\rho)}\sqrt{\frac{\overline{\epsilon}}{\underline{\epsilon}}}dist\left(0,\partial_PF(x)\right),
\]
which shows that $F$ satisfies the strong level-set subdifferential EB on $[\bar{F}<F<\bar{F}+\nu]$. Then by Corollary \ref{Corollary:Strong} we get that $F$ satisfies the strong level-set Bregman proximal EB.
\end{proof}
\begin{remark}\label{remark4}
For problem (P), if  $F$ attains the global minimum value $F^*$ at every critical point , then solution set $\mathbf{X}^*=[F\leq F^*]$, $x^k\in[F^*<F<F^*+\nu]$, and  the inequality ~\eqref{x-Q-linear} with respect to $[F\leq F^*]$ becomes
\begin{eqnarray}\label{x-Q-linear-2}
dist\left(x^{k+1},\mathbf{X}^*\right)\leq\beta dist\left(x^k,\mathbf{X}^*\right).
\end{eqnarray}
Observe that a convex or an invex function $F$ satisfies (\ref{x-Q-linear-2}). Furthermore, conditions such as proximal-PL, a global version of K{\L} and proximal EB in~\cite{Schmidt2016} also guarantee (\ref{x-Q-linear-2}).
\end{remark}
\section{Connections with known error bounds in literature and applications }\label{sec:LSEB relationships}
This section examines the novelty of level-set error bounds and their relationships with existing error bounds. The established linear convergence results of VBPG allow us to exploit the novel convergence results for various existing algorithms. Although  we only study the ``local" version error bounds on $\mathfrak{B}(\bar{x};\eta,\nu)$ in this section, but the same analysis used in this section can be readily extended to  ``global" version error bounds on $[\bar{F}<F<\bar{F}+\nu]$.
\subsection{Error bounds with target set $\bar{\mathbf{X}}_P$ or target value $F(\bar{x})$ }\label{subsec:error bounds of point}

Let $\bar{x}\in\bar{\mathbf{X}}_P$, first we study conditions under which  the distance from any vector $x\in
\mathfrak{B}(\bar{x};\eta,\nu)$ to the set  $\bar{\mathbf{X}}_P$ is bounded by a residual function $R_1(x)$, raised to a certain power, evaluated at $x$.
Specifically, we study the existence of some $\gamma_1$, $\delta_1$, such that
$$dist^{\gamma_1}(x,\bar{\mathbf{X}}_P)\leq\delta_1 R_1(x)\;\forall x\in\mathfrak{B}(\bar{x};\eta,\nu).$$
An expression of this kind is called a first type error bound with target set $\bar{x}_P$ for (P).
\begin{definition}[First type error bounds]
\begin{itemize}
\item[(i)]({\bf Weak metric-subregularity}) We say that $\partial_P F$ is weakly metrically subregular at $\bar{x} \in \bar{X}_P$ for the zero vector $0$ if there exist $\eta$, $\nu$ and $c_2$ such that
\begin{equation}\label{subregularity}
dist\big{(}x, \bar{\mathbf{X}}_P\big{)}\leq c_2 dist\big{(}0,\partial_PF(x)\big{)}\;\forall x\in\mathfrak{B}(\bar{x};\eta,\nu).
\end{equation}
\item[(ii)]({\bf Bregman proximal error bound})
Given a Bregman function $D$ along with $\epsilon>0$, we say that the Bregman proximal error bound (EB) holds
at $\bar{x} \in \bar{X}_P$  if there exist $\eta$, $\nu$ and $c_3$ such that
\begin{equation}\label{Bregman proximal error bound}
dist(x,\bar{\mathbf{X}}_P)\leq c_3dist\big{(}x, T_{D,\epsilon}(x)\big{)}\;\forall x\in\mathfrak{B}(\bar{x};\eta,\nu).
\end{equation}
\item[(iii)]({\bf Luo-Tseng error bound~\cite{Tseng2009}})
We say the Luo-Tseng error bound (EB) holds if any $\xi\geq\inf_{x\in\RR^n} F(x)$, there exists constant $c_4>0$ and $\sigma>0$ such that
\[
dist(x,\bar{\mathbf{X}}_P)\leq c_4\|x-T_{D,\epsilon}(x)\|\quad\mbox{with}\quad D(x,y)=\frac{\|x-y\|^2}{2}
\]
whenever $F(x)\leq\xi$, $\|x-T_{D,\epsilon}(x)\|\leq\sigma$.
\end{itemize}
\end{definition}
A few remarks about (\ref{subregularity}) are in order. Metric subregularity of a set-valued mapping is a well-known notion in
variational analysis. See the monograph \cite{DoR2009} by Dontchev and Rockafellar for motivations, theory, and applications.
In (\ref{subregularity}) if $\mathfrak{B}(\bar{x};\eta,\nu)$ is replaced  by $\mathbb{B}(\bar{x};\eta)$, then
(\ref{subregularity}) is equivalent to metric subregularity of the set-value mapping $\partial_P F$ at $\bar x$ for the vector $0$ (see Exercise 3H.4 of \cite{DoR2009}) for a proof.

The following proposition provides a sufficient condition for Bregman proximal error bound.
\begin{proposition}\label{prop:LuoTseng}
The Luo-Tseng EB condition implies the Bregman proximal EB when $g$ is semiconvex.
\end{proposition}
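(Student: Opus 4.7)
The strategy is not to apply the Luo-Tseng hypothesis at $x$ itself (since the Luo-Tseng bound is phrased with the Euclidean prox $T^{\mathrm{eu}}_\epsilon$ built from $D^{\mathrm{eu}}(x,y)=\tfrac{1}{2}\|x-y\|^2$, whereas we want a bound in terms of the general Bregman prox $T_{D,\epsilon}$) but instead to apply it at the auxiliary point $y=T_{D,\epsilon}(x)$, and then transfer back to $x$ by a single triangle-inequality step. The reason this is promising is that both $\|\cdot-T_{D,\epsilon}(\cdot)\|$ and $\|\cdot-T^{\mathrm{eu}}_\epsilon(\cdot)\|$ are, under semi-convexity, equivalent residuals for $\mathrm{dist}(0,\partial_P F(\cdot))$, so a bound written with one kind of residual can be converted into a bound written with the other at the cost of a multiplicative constant.

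First I would fix $\bar{x}\in\bar{\mathbf{X}}_P$, set $\bar F=F(\bar{x})$, choose $\xi=\bar F+1$, and let $c_4,\sigma>0$ be the constants furnished by the Luo-Tseng EB at level $\xi$. Next I would use Proposition~\ref{prop:Ek}(iii) to note that $F(y)\le F(x)$ for $y=T_{D,\epsilon}(x)$, so provided $x\in\mathfrak B(\bar{x};\eta,\nu)$ with $\nu\le 1$ we have $F(y)\le \xi$. The main work is then to verify the second hypothesis of Luo-Tseng, namely $\|y-T^{\mathrm{eu}}_\epsilon(y)\|\le\sigma$. For this I would apply Proposition~\ref{prop:Gk}(iv) to the \emph{Euclidean} Bregman distance (where $m=M=1$, which is legal since $\epsilon$ can be chosen with $\epsilon<1/\rho$) to obtain
\[
\|y-T^{\mathrm{eu}}_\epsilon(y)\|\le \tfrac{\epsilon}{1-\epsilon\rho}\,\mathrm{dist}\bigl(0,\partial_P F(y)\bigr),
\]
and then Proposition~\ref{prop:Fk}(i) (with the general $D$) to get
\[
\mathrm{dist}\bigl(0,\partial_P F(y)\bigr)\le \bigl(L+\tfrac{M}{\underline{\epsilon}}\bigr)\|x-T_{D,\epsilon}(x)\|.
\]
Composing these two bounds shows $\|y-T^{\mathrm{eu}}_\epsilon(y)\|\le C\,\|x-T_{D,\epsilon}(x)\|$ for an explicit constant $C$; since $\|x-T_{D,\epsilon}(x)\|\to0$ as $x\to\bar{x}$ (again by Proposition~\ref{prop:Gk}(iv)), shrinking $\eta$ and $\nu$ makes this at most $\sigma$.

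Having thus verified both Luo-Tseng hypotheses at $y$, the Luo-Tseng bound gives $\mathrm{dist}(y,\bar{\mathbf{X}}_P)\le c_4 \|y-T^{\mathrm{eu}}_\epsilon(y)\|\le c_4 C\,\|x-T_{D,\epsilon}(x)\|$. A single triangle inequality
\[
\mathrm{dist}(x,\bar{\mathbf{X}}_P)\le \|x-y\|+\mathrm{dist}(y,\bar{\mathbf{X}}_P)\le \bigl(1+c_4C\bigr)\|x-T_{D,\epsilon}(x)\|
\]
then yields the Bregman proximal EB with $c_3=1+c_4C$, finishing the proof.

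The main obstacle is the bookkeeping around the Luo-Tseng $\sigma$-threshold: the Luo-Tseng statement is valid only when the Euclidean prox residual is already small, so one must perform the chain of estimates \emph{before} invoking Luo-Tseng to certify that its threshold is met on a shrunken neighborhood $\mathfrak B(\bar{x};\eta,\nu)$. A secondary nuisance is the compatibility of parameters: the semi-convexity constant $\rho$ must respect $\epsilon\rho<m$ for the general $D$ \emph{and} $\epsilon\rho<1$ for the Euclidean $D$, but both conditions are satisfied by the standing requirement $\overline\epsilon<\min\{m/L,m/\rho\}$ combined with the simple choice $\epsilon\le\overline\epsilon\le 1$, so no essential difficulty arises there.
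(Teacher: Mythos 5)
Your proof is correct in outline but takes a genuinely different, and in fact more ambitious, route than the paper's. The paper never leaves the point $x$: it observes that the two hypotheses of the Luo--Tseng condition, $F(x)\leq\xi$ and $\|x-T(x)\|\leq\sigma_{\xi}$, are automatically met for all $x\in\mathbb{B}(\bar{x};\eta)\cap[\bar{F}<F\leq\bar{F}+\nu]$ once $\eta$ is small enough --- using continuity of the (single-valued, by semiconvexity) map $T$ together with $T(\bar{x})=\bar{x}$ --- and then the Luo--Tseng inequality at $x$ itself \emph{is} the Bregman proximal EB for the Euclidean Bregman function. Your detour through $y=T_{D,\epsilon}(x)$, combined with Proposition~\ref{prop:Fk}(i), Proposition~\ref{prop:Gk}(iv) and a final triangle inequality, buys something real: it delivers the Bregman proximal EB for an \emph{arbitrary} Bregman function $D$ satisfying Assumption~\ref{assump2}, not merely for $D(x,y)=\tfrac{1}{2}\|x-y\|^2$, at the price of the extra constant $1+c_4C$ and the parameter bookkeeping you describe.

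One citation in your argument needs repair. You justify $\|x-T_{D,\epsilon}(x)\|\to 0$ as $x\to\bar{x}$ by Proposition~\ref{prop:Gk}(iv), but that estimate only bounds the residual by a multiple of $dist\left(0,\partial_PF(x)\right)$, and this quantity need not tend to $0$ as $x\to\bar{x}$: the proximal subdifferential is not continuous (already for $f=0$, $g(x)=\max\{0,x\}$ near $\bar{x}=0$ one has $dist(0,\partial_Pg(x))=1$ for all $x>0$, while $\|x-T(x)\|\to 0$). The fact you need is true, but for a different reason: under semiconvexity $T_{D,\epsilon}$ is single-valued (Proposition~\ref{prop:singlevalue}) and hence continuous, and $T_{D,\epsilon}(\bar{x})=\bar{x}$ because $\bar{x}\in\bar{\mathbf{X}}_P$ (Proposition~\ref{prop:Gk}(v)); this continuity is exactly what the paper's own proof invokes to certify the $\sigma$-threshold. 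With that substitution your argument is complete.
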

\begin{proof}
Taking $\xi>\bar{F}$, $\nu\geq\xi-\bar{F}$, then $[\bar{F},\bar{F}+\nu]\subset[F\leq\xi]$. By Luo-Tseng EB condition, for $\xi>\bar{F}$, there are $c_4$ and $\sigma_{\xi}$ such that
$$dist(x,\bar{X}_P)\leq c_4\|x-T(x)\|,\quad F(x)\leq\xi,\quad\|x-T(x)\|\leq\sigma_{\xi}$$
Since $T(x)$ is continuous, then for $\sigma_{\xi}$, there is $\hat{\eta}_{\xi}$ such that
$$\|T(x)-T(\bar{x})\|\leq\frac{\sigma_{\xi}}{2}\qquad when\qquad\|x-\bar{x}\|\leq\hat{\eta}_{\xi}.$$
Now let $\eta=\min\{\frac{\sigma_{\xi}}{2},\hat{\eta}_{\xi}\}$, if $\|x-\bar{x}\|\leq\eta$, we have
\[
\|x-T(x)\|\leq\|x-\bar{x}\|+\|T(x)-T(\bar{x})\|=\|x-\bar{x}\|+\|T(x)-T(\bar{x})\|.
\]
Therefore $\forall x\in\mathbb{B}(\bar{x};\eta)\cap[\bar{F}<F\leq\bar{F}+\nu]$, we have $\|x-T(x)\|\leq\sigma_{\xi}$, $F(x)\leq\xi$. By Luo-Tseng EB, we have
$$dist(x,\bar{X}_P)\leq c_4\|x-T(x)\|,$$
which shows that the Bregman proximal EB holds at $\bar{x}$.
\end{proof}

We next examine the second type error bounds with target value $F(\bar{x})$. These error bounds are used to bound the absolute difference  of  any function value $F$  at $\bar{x}\in\bar{\mathbf{X}}_P$ from a test set to  the value $\bar{F}=F(\bar{x})$ by a residual function $R_2$. Specifically we study if there exist some $\gamma_2$, $\delta_2$ such that
$$R_2(x)\geq\delta_2\left(F(x)-\bar{F}\right)^{\gamma_2}\;\forall x\in\mathfrak{B}(\bar{x};\eta,\nu).$$
\begin{definition}[Second type error bounds]
\begin{itemize}
\item[(i)]{\bf Kurdyka-{\L}ojasiewicz property} The proper lower semicontinuous function $F$ is said to satisfy the Kurdyka-{\L}ojasiewicz  (K{\L}) property at $\bar{x}$ with exponent $\alpha\in(0,1)$, if there exist $\nu>0$, $\eta>0$, and $c_5>0$ such that
\[
dist(0,\partial_LF(x))\geq c_5[F(x)-\bar{F}]^{\alpha}~~~\forall x\in\mathfrak{B}(\bar{x};\eta,\nu).
\]
\item[(ii)]{\bf Bregman proximal gap condition}
Given a Bregman function $D$ along with  $\epsilon>0$, we say that the function $F$  satisfies the
Bregman proximal  (BP) gap condition relative to $D$ and $\epsilon$  at $\bar{x}\in{\rm dom}F$ with exponent $q\in[0,2)$ if there exist $\nu>0$, $\eta>0$, and $\mu>0$ such that
$$G_{D,\epsilon}(x)\geq\mu\big{(}F(x)-\bar{F}\big{)}^q \;\forall x\in\mathfrak{B}(\bar{x};\eta,\nu),$$
where $G_{D,\epsilon}(x)=-\frac{1}{\epsilon}\min\limits_{y\in\RR^n}\big{\{}\langle\nabla f(x),y-x\rangle+g(y)-g(x)+\frac{1}{\epsilon}D(x,y)\big{\}}$.
\end{itemize}
\end{definition}


\subsection{Relationships between level-set error bounds and other error bounds}
\noindent{\bf Assumption} (H):
For $\bar{x}\in\bar{\mathbf{X}}_P$, there is a $\delta>0$ such that $F(y)\leq F(\bar{x})$ whenever $y\in\bar{\mathbf{X}}_P$ and $\|y-\bar{x}\|\leq\delta$.

The next proposition will establish the relationships between level-set based EB and the first type EB.
\begin{proposition}\label{prop:levelset_critical}
Suppose that Assumption {\bf(H)} holds at $\bar{x}\in\bar{\mathbf{X}}_P$, for $x\in\mathfrak{B}(\bar{x};\eta,\nu)$ with $\eta\leq\delta$, then we have
$$dist(x,[F\leq\bar{F}])\leq dist(x,\bar{\mathbf{X}}_P).$$
\end{proposition}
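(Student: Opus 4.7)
The plan is to reduce the statement to a pointwise comparison: show that for every $y \in \bar{\mathbf{X}}_P$ we have $\mathrm{dist}(x,[F\leq\bar F]) \leq \|x-y\|$, and then take the infimum over $y$ to conclude. Since $\bar x\in\bar{\mathbf X}_P\cap[F\leq\bar F]$, both distances are automatically bounded by $\|x-\bar x\|\leq\eta$, so the claim is immediate whenever one chooses $y=\bar x$; the content of the proposition is to rule out distant minimizers on the right-hand side.

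To carry out the pointwise comparison I would split on whether $\|y-\bar x\|\leq\delta$. In the first subcase, Assumption~(H) gives $F(y)\leq F(\bar x)=\bar F$, so $y\in[F\leq\bar F]$ and hence $\mathrm{dist}(x,[F\leq\bar F])\leq\|x-y\|$ is direct. In the opposite subcase $\|y-\bar x\|>\delta$, the reverse triangle inequality gives $\|x-y\|\geq\|y-\bar x\|-\|x-\bar x\|\geq\delta-\eta$, while $\mathrm{dist}(x,[F\leq\bar F])\leq\|x-\bar x\|\leq\eta$ because $\bar x\in[F\leq\bar F]$; combining, $\mathrm{dist}(x,[F\leq\bar F])\leq\|x-y\|$ whenever $\eta\leq\delta-\eta$. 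One then takes the infimum over $y\in\bar{\mathbf X}_P$ on the right-hand side of this pointwise inequality to arrive at $\mathrm{dist}(x,[F\leq\bar F])\leq\mathrm{dist}(x,\bar{\mathbf X}_P)$.

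The main obstacle, and the only delicate point, is the quantitative balance in the second subcase: a literal reading of the hypothesis $\eta\leq\delta$ only gives $\|x-y\|\geq 0$ in that case, whereas closing the triangle bound really requires $\eta\leq\delta/2$. In practice this is harmless, since one can equivalently work with any approximating sequence $\{y_k\}\subset\bar{\mathbf X}_P$ satisfying $\|x-y_k\|\to\mathrm{dist}(x,\bar{\mathbf X}_P)\leq\|x-\bar x\|\leq\eta$; the triangle inequality forces $\|y_k-\bar x\|\leq 2\eta+o(1)$, and by tightening $\eta$ to at most $\delta/2$ (or equivalently shrinking the neighbourhood defining $\mathfrak{B}(\bar x;\eta,\nu)$) the $y_k$'s eventually fall within the $\delta$-ball where Assumption~(H) applies. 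Aside from this book-keeping, the proof uses only the triangle inequality together with the inclusion $\bar{\mathbf X}_P\cap\mathbb{B}(\bar x;\delta)\subseteq[F\leq\bar F]$ supplied by Assumption~(H).
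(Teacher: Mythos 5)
Your argument is correct and is essentially the same as the paper's: the paper takes $x_p=Proj_{\bar{\mathbf{X}}_P}(x)$, invokes Assumption (H) to conclude $x_p\in[F\leq\bar{F}]$, and reads off $dist(x,[F\leq\bar{F}])\leq\|x-x_p\|=dist(x,\bar{\mathbf{X}}_P)$; your pointwise comparison over $y\in\bar{\mathbf{X}}_P$ followed by an infimum is the same argument packaged differently. The quantitative issue you flag is genuine, and it is present in the paper's own proof as well: for the nearest point $x_p$ (or for your minimizing sequence $y_k$) the triangle inequality only yields $\|x_p-\bar{x}\|\leq\|x_p-x\|+\|x-\bar{x}\|\leq 2\eta$, so Assumption (H) is applicable only under $2\eta\leq\delta$, whereas the stated hypothesis is $\eta\leq\delta$; the paper asserts $F(x_p)\leq\bar{F}$ without checking that $x_p$ lies in the $\delta$-ball. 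Your remedy --- tightening to $\eta\leq\delta/2$, i.e.\ shrinking the neighbourhood $\mathfrak{B}(\bar{x};\eta,\nu)$ --- is the right one and costs nothing downstream, since the proposition is only ever used on sufficiently small neighbourhoods. Your use of a minimizing sequence in place of an exact projection is also slightly more careful than the paper, because $\bar{\mathbf{X}}_P$ (defined via the proximal subdifferential) need not be closed, so the projection need not be attained.
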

\begin{proof}
By Assumption {\bf(H)}, for $\eta\leq\delta$, we have that $\bar{\mathbf{X}}_P\cap\mathfrak{B}(\bar{x};\eta,\nu)=\emptyset$. For given $x\in\mathfrak{B}(\bar{x};\eta,\nu)$, let $x_p=Proj_{\bar{\mathbf{X}}_P}(x)$, then we must have $F(\bar{x}_p)\leq\bar{F}$, $x_p\in[F\leq\bar{F}]$. Therefore, for $x\in\mathfrak{B}(\bar{x};\eta,\nu)$, we conclude the result.
\end{proof}
We are ready to present a key result on how the level-set subdifferential EB condition relates to some important notions in variational analysis and optimization.
\begin{theorem}\label{theorem:subregularity}{\bf (K{\L} property, Bregman proximal EB, and weak metric-subregularity imply level-set subdifferential EB)} For the proper l.s.c function $F$, the following assertions hold.
\begin{itemize}
\item[{\rm(a)}] Suppose that $F$ satisfies the K{\L} property at $\bar x$ with exponent $\alpha\in[0,1)$ over $\mathfrak{B}(\bar{x};\eta,\nu)$. Then $F$ satisfies the level-set subdifferential EB condition at $\bar{x}$ with $\gamma=\frac{\alpha}{1-\alpha}$ over $x\in\mathfrak{B}(\bar{x};\frac{\eta}{2},\nu)$.
So $\alpha=\frac{\gamma}{1+\gamma}$. As a consequence,
if $\alpha\in[0,1/2]$, then one has $\gamma\in(0,1]$, and if $\alpha\in (\frac{1}{2},1)$, then we have $\gamma>1$.
\item[{\rm(b)}] Suppose Assumption~\ref{assump1} holds, and Assumption {\bf(H)} holds
at $\bar{x}\in\bar{\mathbf{X}}_P$. If one of the following condition holds
\begin{itemize}
\item[{\rm(i)}] the Bregman proximal EB holds at $\bar{x}$, and $g$ is semiconvex;
\item[{\rm(ii)}] $\partial_P F$ is weakly metric-subregular at $\bar{x}$ for the zero vector $0$;
\end{itemize}
then  the level-set subdifferential EB condition holds at $\bar{x}$ with
$\gamma=1$.
\end{itemize}
\end{theorem}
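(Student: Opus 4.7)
The plan is to handle parts~(a) and (b) with essentially different techniques: for (a) I will leverage the classical K\L{} desingularizing function to build a finite-length descent trajectory from $x$ to the level set, and then use K\L{} a second time to convert function-value gaps into subdifferential norms; for (b) I will chain Assumption~(H) via Proposition~\ref{prop:levelset_critical} with the appropriate first-type error bound to reduce to the target set $\bar{\mathbf{X}}_P$.

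For part~(a), I fix $x\in\mathfrak{B}(\bar{x};\eta/2,\nu)$ and start a proximal-point (or equivalently VBPG) descent sequence $x^{0}=x$, $x^{k+1}\in\argmin_{y}\{F(y)+\frac{1}{2\lambda}\|y-x^{k}\|^{2}\}$. Combining the sufficient-decrease inequality of Proposition~\ref{prop:Ek}(iii), the subgradient estimate of Proposition~\ref{prop:Fk}(i), and the K\L{} inequality through the concave desingularizing function $\varphi(s)=\frac{1}{c_{5}(1-\alpha)}s^{1-\alpha}$, the classical one-step bound
$$2\|x^{k+1}-x^{k}\|\le\|x^{k}-x^{k-1}\|+C\bigl[\varphi(F(x^{k})-\bar{F})-\varphi(F(x^{k+1})-\bar{F})\bigr]$$
sums to $\sum_{k}\|x^{k+1}-x^{k}\|\le C'(F(x)-\bar{F})^{1-\alpha}$. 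A trapping argument in the spirit of Proposition~\ref{prop:3.1} (the reason for shrinking $\eta$ to $\eta/2$) keeps the orbit inside $\mathfrak{B}(\bar{x};\eta,\nu)$, so Proposition~\ref{proposition4}(ii) identifies its limit $\tilde{x}$ as a critical point, and the K\L{} driving force forces $F(\tilde{x})=\bar{F}$, putting $\tilde{x}\in[F\le\bar{F}]$ and yielding $\mathrm{dist}(x,[F\le\bar{F}])\le C'(F(x)-\bar{F})^{1-\alpha}$. Applying K\L{} once more to eliminate $F(x)-\bar{F}$ in terms of $\mathrm{dist}(0,\partial_{L}F(x))$ and raising to the power $\alpha/(1-\alpha)$ gives
$$\mathrm{dist}^{\alpha/(1-\alpha)}(x,[F\le\bar{F}])\le c_{1}\,\mathrm{dist}(0,\partial_{L}F(x))\le c_{1}\,\mathrm{dist}(0,\partial_{P}F(x)),$$
where the last step uses $\partial_{P}F\subset\partial_{L}F$. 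This produces the claimed exponent $\gamma=\alpha/(1-\alpha)$, along with the clean equivalences $\alpha\le 1/2\Leftrightarrow\gamma\le 1$ and $\alpha>1/2\Leftrightarrow\gamma>1$.

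For part~(b), Proposition~\ref{prop:levelset_critical} under Assumption~(H) (after shrinking $\eta$ if needed so $\eta\le\delta$) already gives $\mathrm{dist}(x,[F\le\bar{F}])\le\mathrm{dist}(x,\bar{\mathbf{X}}_{P})$ throughout $\mathfrak{B}(\bar{x};\eta,\nu)$. Under hypothesis~(ii), weak metric-subregularity is by definition $\mathrm{dist}(x,\bar{\mathbf{X}}_{P})\le c_{2}\,\mathrm{dist}(0,\partial_{P}F(x))$, and chaining yields the $\gamma=1$ bound. Under hypothesis~(i), chain instead the Bregman proximal EB $\mathrm{dist}(x,\bar{\mathbf{X}}_{P})\le c_{3}\|x-T_{D,\epsilon}(x)\|$ with Proposition~\ref{prop:Gk}(iv), whose semiconvexity hypothesis on $g$ is granted in~(i), to obtain $\|x-T_{D,\epsilon}(x)\|\le C\,\mathrm{dist}(0,\partial_{P}F(x))$; once again $\gamma=1$.

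The main obstacle is contained entirely in part~(a): one must carefully guarantee that the descent iterates do not escape the K\L{} neighborhood $\mathfrak{B}(\bar{x};\eta,\nu)$ and that the limit point actually attains the target value $\bar{F}$ rather than some strictly larger critical value of $F$. This is exactly what forces the radius shrinkage from $\eta$ to $\eta/2$ and what makes the concavity-of-$\varphi$ argument nontrivial; everything else, including the two chaining arguments in part~(b), is straightforward bookkeeping.
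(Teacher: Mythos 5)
Your part~(b) is essentially the paper's own proof: chain Proposition~\ref{prop:levelset_critical} (under Assumption~(H), after shrinking $\eta\le\delta$) with weak metric-subregularity in case~(ii), or with the Bregman proximal EB followed by Proposition~\ref{prop:Gk}(iv) in case~(i); both give $\gamma=1$, and the final passage from $\partial_L$ to $\partial_P$ via $\partial_PF\subset\partial_LF$ is also what the paper does. Part~(a) is where you genuinely diverge: the paper disposes of it in two lines by citing Proposition~3.16 and Theorem~3.22 of \cite{kru}, which already package the implication ``K{\L} with exponent $\alpha$ $\Rightarrow$ H\"older level-set error bound with exponent $\gamma=\alpha/(1-\alpha)$,'' whereas you reconstruct that implication from scratch via the classical desingularization argument (sufficient decrease $+$ subgradient bound $+$ concavity of $\varphi(s)=s^{1-\alpha}$, telescoping to finite length, trapping in $\mathbb{B}(\bar x;\eta)$, identifying the limit as a point of $[F\le\bar F]$, then applying K{\L} a second time to trade $F(x)-\bar F$ for $\mathrm{dist}(0,\partial_LF(x))$). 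Your route is self-contained and makes transparent why the radius shrinks to $\eta/2$ and where $\gamma=\alpha/(1-\alpha)$ comes from, at the cost of having to justify two delicate points the citation hides: (1) that the descent limit attains exactly the value $\bar F$ rather than a larger critical value --- this needs $F(x^k)\to F(\tilde x)$ along the prox-point sequence (which does hold for the exact proximal step) so that K{\L} at $\tilde x$ yields a contradiction, or the alternative case where some iterate enters $[F\le\bar F]$ directly; and (2) that the one-step estimates you borrow from Propositions~\ref{prop:Ek}(iii) and~\ref{prop:Fk}(i) presuppose Assumptions~\ref{assump1}--\ref{assump2}, while statement~(a) is asserted for a bare proper l.s.c.\ $F$ --- for the pure proximal-point scheme the analogous inequalities $F(x^{k+1})+\tfrac{1}{2\lambda}\|x^{k+1}-x^k\|^2\le F(x^k)$ and $\mathrm{dist}(0,\partial_LF(x^{k+1}))\le\tfrac{1}{\lambda}\|x^{k+1}-x^k\|$ hold without those assumptions, so you should phrase the descent step that way rather than through the VBPG operator. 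With those two points made explicit, your argument is correct and in fact supplies the proof that the paper outsources.
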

\begin{proof} {\rm(a):} By Proposition 3.16 and Theorem 3.22 of \cite{kru}, we can easily get that there exists some $c_1>0$ such that
\[
dist^{\gamma}(x,[F\leq \bar{F}])\leq c_1 dist(0, \partial_L F(x))\;\forall x\in \mathbb{B}(\bar{x};\frac{\eta}{2},\nu),
\]
where $\gamma=\frac{\alpha}{1-\alpha}$. Since $\partial_P F(x)\subset \partial_L F(x)\;\forall x\in \RR^n$, the claim is proved.\\
{\rm(b):} {\rm(i):} By Proposition~\ref{prop:levelset_critical} and the Bregman proximal EB, we have
\[
dist\left(x,[F\leq\bar{F}]\right)\leq dist(x,\bar{\mathbf{X}}_P)\leq c_3 dist\big{(}x,T_{D,\epsilon}(x)\big{)}.
\]
Then by (iv) of Proposition~\ref{prop:Gk} one has
\[
dist\left(x,[F\leq\bar{F}]\right)\leq c_1 dist(0,\partial_PF(x))\;\mbox{with}\; c_1=c_3\left(\frac{\overline{\epsilon}}{m-\overline{\epsilon}\rho}\right)\sqrt{\frac{\overline{\epsilon}}{\underline{\epsilon}}}.
\]
{\rm(ii):} By Assumption {\bf(H)}, for $\eta\leq\delta$, we have that $\bar{\mathbf{X}}_P\subset[F\leq\bar{F}]$. For $x\in\mathfrak{B}(\bar{x};\eta,\nu)$, since $\partial_P F$ satisfies weak metric subregularity, we have
\[
c_2 dist\big{(}x,\partial_PF(x)\big{)}\geq dist\big{(}x, \bar{\mathbf{X}}_P\big{)}\geq dist\left(x,[F\leq\bar{F}]\right),
\]
which yields the desired result.
\end{proof}
The following proposition provides the value proximity in terms of the distance between $x$ and the set $[F\leq\bar{F}]$. Thanks of this proposition, we will establish the connection of level set EB with second type EB.
\begin{proposition}[Function-value proximity in terms of  level sets]\label{proposition1.5}
Suppose that Assumptions~\ref{assump1} and \ref{assump2} hold. If $\overline{\epsilon}<\frac{m}{L}$, then there is some $c_0=\frac{3}{2}L+\frac{M}{2\underline{\epsilon}}>0$ such that the   following estimation holds.
$$F\left(t_{D,\epsilon}(x)\right)-\overline{F}\leq E_{D,\epsilon}(x)-\overline{F}\leq c_0{dist}^2(x,[F\leq\overline{F}])\;\forall x\in [F>\overline{F}]\;\mbox{and}\; t_{D,\epsilon}(x)\in T_{D,\epsilon}(x).$$
\end{proposition}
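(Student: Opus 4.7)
The plan is to prove the two inequalities separately. The first one, $F(t_{D,\epsilon}(x)) - \overline{F} \leq E_{D,\epsilon}(x) - \overline{F}$, follows directly from part (ii) of Proposition~\ref{prop:Ek}: the hypothesis $\overline{\epsilon} < m/L$ forces $a = \frac{1}{2}(m/\epsilon - L) > 0$, so $F(t_{D,\epsilon}(x)) \leq E_{D,\epsilon}(x) - a\|x - t_{D,\epsilon}(x)\|^2 \leq E_{D,\epsilon}(x)$, and the conclusion follows by subtracting $\overline{F}$.

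For the second inequality, I would pick any $x_p \in \operatorname{Proj}_{[F\leq \overline{F}]}(x)$. The projection is nonempty because Assumption~\ref{assump1}(iii) makes $[F\leq\overline{F}]$ closed and bounded, and the set is nonempty since $\overline{x}$ lies in it. Plugging $y = x_p$ as a test vector into the minimization defining $E_{D,\epsilon}(x)$ gives the upper bound
\[
E_{D,\epsilon}(x) \leq f(x) + \langle \nabla f(x),\, x_p - x\rangle + g(x_p) + \frac{1}{\epsilon}\,D(x, x_p).
\]

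The heart of the argument is controlling the term $f(x) + \langle \nabla f(x), x_p - x\rangle$. A single application of the descent lemma only gives a lower bound on this quantity, which is the wrong direction. To obtain the target constant $\frac{3L}{2}$, I will apply the descent lemma at $x_p$ instead,
\[
f(x) \leq f(x_p) + \langle \nabla f(x_p),\, x - x_p\rangle + \frac{L}{2}\|x-x_p\|^2,
\]
and then rewrite
\[
f(x) + \langle \nabla f(x),\, x_p - x\rangle \leq f(x_p) + \langle \nabla f(x_p) - \nabla f(x),\, x - x_p\rangle + \frac{L}{2}\|x-x_p\|^2.
\]
Bounding the middle inner product by Cauchy--Schwarz and the $L$-Lipschitz continuity of $\nabla f$ produces $\langle \nabla f(x_p) - \nabla f(x), x-x_p\rangle \leq L\|x-x_p\|^2$, hence altogether $f(x) + \langle \nabla f(x), x_p - x\rangle \leq f(x_p) + \frac{3L}{2}\|x-x_p\|^2$.

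Combining this with $f(x_p) + g(x_p) = F(x_p) \leq \overline{F}$ and the uniform bound $D(x, x_p) \leq \frac{M}{2}\|x-x_p\|^2$ from Assumption~\ref{assump2} (together with $\epsilon \geq \underline{\epsilon}$) delivers
\[
E_{D,\epsilon}(x) \leq \overline{F} + \Bigl(\tfrac{3L}{2} + \tfrac{M}{2\underline{\epsilon}}\Bigr)\|x - x_p\|^2 = \overline{F} + c_0\operatorname{dist}^2(x, [F\leq\overline{F}]),
\]
as required. There is no serious obstacle here; the only subtle step is the two-step handling of the linearized $f$-term to recover $\frac{3L}{2}$ rather than the naive $\frac{L}{2}$, and everything else is bookkeeping from the standing assumptions.
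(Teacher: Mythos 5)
Your proof is correct and follows essentially the same route as the paper: test the minimum defining $E_{D,\epsilon}(x)$ with the projection $x_p$ of $x$ onto $[F\leq\overline{F}]$, apply the descent lemma at $x_p$, and bound the residual inner product $\langle\nabla f(x_p)-\nabla f(x),x-x_p\rangle$ by $L\|x-x_p\|^2$ to get the $\tfrac{3L}{2}$ constant, with $\tfrac{1}{\epsilon}D(x,x_p)\leq\tfrac{M}{2\underline{\epsilon}}\|x-x_p\|^2$ supplying the rest. The only (harmless) divergence is that you justify the first inequality via part (ii) of Proposition~\ref{prop:Ek} rather than the paper's citation of part (i), which is if anything the more accurate reference.
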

\begin{proof} With the given choice of $\epsilon$,   $T_{D,\epsilon}(x)$ is nonempty by Proposition~\ref{prop:Ek}. So $E_{D,\epsilon}(x)$ has a finite value for any given $x$.
For $x\in [F>\bar{F}]$, let $x_p\in[F\leq\bar{F}]$ such that $\|x-x_p\|=dist(x,[F\leq\bar{F}])$. Since $F(x_p)\leq\bar{F}$, then
\begin{eqnarray}\label{eq:4.1}
F(t_{D,\epsilon}(x))-\bar{F}&\leq & E_{D,\epsilon}(x)-\bar{F}\qquad\qquad\qquad\qquad\qquad\qquad\qquad\qquad\mbox{(by (i) of Proposition~\ref{prop:Ek})}   \nonumber        \\
&\leq&\min_{y\in\RR^n}\big{\{}f(x)+\langle\nabla f(x),y-x\rangle+g(y)+\frac{1}{\epsilon}D(x,y)\big{\}}-(f+g)(x_p)\nonumber\\
&\leq&f(x)+\langle\nabla f(x),x_p-x\rangle+g(x_p)+\frac{1}{\epsilon}D(x,x_p)-(f+g)(x_p)\nonumber\\
&=&f(x)-f(x_p)+\langle\nabla f(x),x_p-x\rangle+\frac{1}{\epsilon}D(x,x_p)\nonumber\\
&\leq&\langle\nabla f(x_p)-\nabla f(x),x-x_p\rangle+\frac{L}{2}\|x-x_p\|^2+\frac{1}{\epsilon}D(x,x_p)\quad\mbox{(by Assumption~\ref{assump1})}\nonumber\\
&\leq&\frac{3}{2}L\|x-x_p\|^2+\frac{M}{2\underline{\epsilon}}\|x-x_p\|^2\qquad\qquad\qquad\qquad\quad\mbox{(by Assumptions~\ref{assump1} and~\ref{assump2})}\nonumber\\
&\leq&c_0\|x-x_p\|^2
=c_0dist^2(x,[F\leq\bar{F}]).\qquad\mbox{(where $c_0=\frac{3}{2}L+\frac{M}{2\underline{\epsilon}}$).}
\end{eqnarray}
This completes the proof.
\end{proof}
Under the assumption of semi-convexity of $g$ at $\bar{x}$, we have the following theorem, which gives
an answer to  the converse of statement (a) of Theorem~\ref{theorem:subregularity}.
\begin{proposition} \label{theo:4.3}{\bf(Level-set Bregman EB implies BP gap condition and K{\L} property)}
Suppose that Assumption 1 holds and $g$ is semi-convex.
For a given  Bregman function $D$ along with $\epsilon>0$ satisfying Assumption~\ref{assump2}, we have the following statements:
\begin{itemize}
\item[\rm{(i)}] If $F$ satisfies level-set Bregman EB holds at $\bar{x}$ with exponent $p$ over $\mathfrak{B}(\bar{x};\eta,\nu)$, then BP gap condition holds at $\bar{x}$  with exponent $q=\frac{1}{\min\{\frac{1}{p},1\}}$
over $\mathfrak{B}(\bar{x};\eta,\nu)$.
\item[\rm{(ii)}] If $F$ satisfies BP gap condition at $\bar{x}$ with exponent $q$ over $\mathfrak{B}(\bar{x};\eta,\nu)$, then function $F$ has the K{\L} property at $\bar{x}$ with exponent of $\frac{q}{2}$ over $\mathfrak{B}(\bar{x};\eta,\nu)$ .
\end{itemize}
\end{proposition}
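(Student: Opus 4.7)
The plan splits cleanly according to (i) and (ii), with a shared first step: for any $x\in\mathfrak{B}(\bar{x};\eta,\nu)$ we have $F(x)>\bar{F}$, so Proposition~\ref{proposition1.5} combined with the identity $E_{D,\epsilon}(x)=F(x)-\epsilon G_{D,\epsilon}(x)$ from Proposition~\ref{prop:Ek}(i) yields the two-term estimate
\[
F(x)-\bar{F}\;\le\;\epsilon\,G_{D,\epsilon}(x)+c_0\,dist^2(x,[F\le\bar{F}]).
\]
Part (i) will remove the distance term, and part (ii) will remove the gap term.

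For (i), I would first apply the level-set Bregman EB hypothesis in the form $dist^2(x,[F\le\bar{F}])\le\theta^{2/p}\,\|x-T_{D,\epsilon}(x)\|^{2/p}$, and then invoke Proposition~\ref{prop:Gk}(ii) to substitute $\|x-T_{D,\epsilon}(x)\|^2\le\tfrac{2\overline{\epsilon}^2}{m-\overline{\epsilon}\rho}\,G_{D,\epsilon}(x)$, obtaining the master inequality
\[
F(x)-\bar{F}\;\le\;\epsilon\,G_{D,\epsilon}(x)+C\,G_{D,\epsilon}(x)^{1/p}
\]
for an explicit constant $C$. The next step is to bound $G_{D,\epsilon}$ on the bounded set $\mathfrak{B}(\bar{x};\eta,\nu)$ by some $M<\infty$ (using its continuity, noted just after Proposition~\ref{prop:Gk}) and to establish the elementary inequality
\[
G+G^{1/p}\;\le\;\bigl(1+M^{|1-1/p|}\bigr)\,G^{\min\{1,1/p\}}\qquad\text{for all } 0\le G\le M,
\]
proved by factoring out the dominant power in each of the two cases $p\ge 1$ and $p\le 1$. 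Setting $q=1/\min\{1/p,1\}$, so that $\min\{1,1/p\}=1/q$, yields $F(x)-\bar{F}\le\tilde{C}\,G_{D,\epsilon}(x)^{1/q}$, which inverts to the BP gap condition $G_{D,\epsilon}(x)\ge\mu\,(F(x)-\bar{F})^q$.

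For (ii), the route is a short chain. Combining the BP gap hypothesis with the upper bound $G_{D,\epsilon}(x)\le\tfrac{\overline{\epsilon}}{2\underline{\epsilon}(m-\overline{\epsilon}\rho)}\,dist^2(0,\partial_PF(x))$ from Proposition~\ref{prop:Gk}(iii) and taking a square root yields $dist(0,\partial_PF(x))\ge c_5\,(F(x)-\bar{F})^{q/2}$ on $\mathfrak{B}(\bar{x};\eta,\nu)$. Because $g$ is semi-convex, the paper's observation that $\partial_Pg=\partial_Lg$, together with the sum-rule identities $\partial_PF=\nabla f+\partial_Pg$ and $\partial_LF=\nabla f+\partial_Lg$, gives $\partial_PF(x)=\partial_LF(x)$, so the same inequality holds with $\partial_LF$ in place of $\partial_PF$, which is exactly the K{\L} property at $\bar{x}$ with exponent $q/2$.

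I expect the main obstacle to lie in part (i), specifically in tracking the nonsmooth exponent $\min\{1,1/p\}$ cleanly through the chain and in justifying uniform control of the mixed expression $G+G^{1/p}$ on the neighborhood (which is what boundedness of $G_{D,\epsilon}$ secures). Once that elementary interpolation is in place, inverting to extract $q=1/\min\{1/p,1\}$ is purely algebraic. Part (ii) in contrast has no case work and amounts to a three-line calculation once the identity $\partial_PF=\partial_LF$ under semi-convexity of $g$ is invoked.
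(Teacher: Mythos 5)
Your proposal is correct and takes essentially the same route as the paper: both hinge on the decomposition $F(x)-\bar{F}=\epsilon G_{D,\epsilon}(x)+\bigl(E_{D,\epsilon}(x)-\bar{F}\bigr)$ together with Proposition~\ref{proposition1.5}, then the level-set Bregman EB plus (ii) of Proposition~\ref{prop:Gk} for part (i), and (iii) of Proposition~\ref{prop:Gk} plus $\partial_PF=\partial_LF$ under semi-convexity of $g$ for part (ii). The only difference is that you make explicit the interpolation step $G+G^{1/p}\leq\tilde{C}\,G^{\min\{1,1/p\}}$ on a bounded range of $G$, which the paper compresses into ``so there is some $\mu>0$''; this is a clarification of the same argument, not a different one.
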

\begin{proof}
\rm{(i):} For $x\in\mathfrak{B}(\bar{x};\eta,\nu)$, let $x_p\in[F\leq\bar{F}]$ s.t. $\|x-x_p\|=dist(x,[F\leq\bar{F}])$. We have $F(x_p)\leq F(\bar{x})=\bar{F}$ and the estimate for term $E_{D,\epsilon}(x)-\bar{F}$ can obtained by Proposition~\ref{proposition1.5} as following
\begin{equation}{\label{eq:EF}}
E_{D,\epsilon}(x)-\bar{F}\leq c_0{dist}^2(x,[F\leq\bar{F}])\quad\mbox{with}\quad c_0=\frac{3}{2}L+\frac{M}{2\underline{\epsilon}}.
\end{equation}
Furthermore, we obtain
\begin{eqnarray*}
F(x)-\bar{F}&=&F(x)-E_{D,\epsilon}(x)+E_{D,\epsilon}(x)-\bar{F}\\
                 &\leq&F(x)-E_{D,\epsilon}(x)+c_0dist^2(x,[F\leq \bar{F}])\qquad\mbox{(by~\eqref{eq:EF})}\\
                 &\leq&\epsilon G_{D,\epsilon}(x)+c_0\theta^2dist^{\frac{2}{p}}\left(x,T_{D,\epsilon}(x)\right)\;\mbox{(by level-set Bregman EB condition)}\\
                 &\leq&\epsilon G_{D,\epsilon}(x)+c_0\theta^2\left(\frac{2\overline{\epsilon}^2}{m-\overline{\epsilon}\rho}\right)^{\frac{1}{p}}\left(G_{D,\epsilon}(x)\right)^{\frac{1}{p}}.\quad\mbox{(by (ii) of Proposition~\ref{prop:Gk})}
\end{eqnarray*}
So, there is some $\mu>0$ such that
$$G_{D,\epsilon}(x)\geq\mu\left(F(x)-\bar{F}\right)^q,\quad\forall x\in\mathfrak{B}(\bar{x};\eta,\nu),\quad q=\frac{1}{\min\{\frac{1}{p},1\}}.$$
The proof is completed.\\
\rm{(ii):} By the hypothesis, the BP gap condition  holds at $\bar{x}$ with exponent $q$
over $\mathfrak{B}(\bar{x};\eta,\nu)$ , i.e.,
$$G_{D,\epsilon}(x)\geq\mu\left(F(x)-\bar{F}\right)^q~~\forall x\in \mathfrak{B}(\bar{x};\eta,\nu). $$
By the assumptions for $g$, one has $\partial_Pg(x)=\partial_Lg(x)$ and $\partial_P F(x)=\partial_L F(x)$ $\forall
x\in \mathfrak{B}(\bar{x};\eta,\nu)$. Then by (iii) of Proposition~\ref{prop:Gk} we have
\[
G_{D,\epsilon}(x)\leq\frac{\overline{\epsilon}}{2\underline{\epsilon}(m-\overline{\epsilon}\rho)}dist^2(0,\partial_LF(x)).
\]
It follows that
\[
\left(\frac{2\underline{\epsilon}}{\overline{\epsilon}}\right)(m-\overline{\epsilon}\rho)\mu\left(F(x)-\bar{F}\right)^q\leq\left[dist\big{(}0,\partial_L F(x)\big{)}\right]^2,
\]
which implies that
\[
dist\big{(}0,\partial_L F(x)\big{)}\geq\sqrt{\left(\frac{2\underline{\epsilon}}{\overline{\epsilon}}\right)(m-\overline{\epsilon}\rho)\mu}\left(F(x)-\bar{F}\right)^{\frac{q}{2}}.
\]
The assertion is justified.
\end{proof}
\subsection{Examples illustrating the novelty of the level-set subdifferential EB condition}\label{subsec:examples}
The following examples show that K{\L} property, weak metric subregularity or Bregman proximal EB are not be necessary for level-set subdifferential EB.
\begin{example}[level-set subdifferential EB does not imply weak metric subregularity] Let $\bar{x}=(0,0)^T$ and $F:\R^2\rightarrow \R$ be defined by
\[
  F(x):=\left\{
  \begin{array}{cc}
    x_1^2-x_2^3 & \mbox{if}\;x_2>0 \\[0.2cm]
    x_2^3 & \mbox{if}\; x_2\leq 0.
  \end{array}
  \right.
\]
By some direct calculations, we get that $F$ is a lower semicontinuous function, $\bar{x}$ is the unique proximal critical point,
\[
[F\leq 0]=\R\times (-\infty, 0]\cup \{(x_1,x_2)^T|x_1^2\leq x_2^3\}
\]
and
\[
\nabla F(x)=(2x_1,-3x_2^2)^T\;\mbox{when}\;x\not\in [F\leq 0].
\]
Since for each $x$ closed enough to $\bar{x}$ with $\nu >F(x)>0$ and some $\nu>0$ one has
\[
d(x,[F\leq 0])\leq |x_1|\leq \frac{1}{2}||\nabla F(x)||,
\]
$F$ satisfies the level-set subdifferential EB condition at $\bar{x}$ with the exponent $\gamma=1$. However, $F$ is not weakly metric-subregular at $\bar{x}$ for the zero vector $0$, since for $x_n=(\frac{1}{n^\frac{5}{4}},\frac{1}{n})$ which is not in $[F\leq 0]$ one has

\[
\frac{\sqrt{x_{n,1}^2+x_{n,2}^2}}{\sqrt{4x_{n,1}^2+9x_{n,2}^4}}\rightarrow \infty\;\mbox{as}\; n\to \infty.
\]
\end{example}

\begin{example}[level-set subdifferential EB does not imply K{\L} property] (See Example 3.19 of \cite{kru}) Let $F:\R\to \R$ be given by
\[
  F(x):=\left\{
  \begin{array}{cc}
    0 & \mbox{if}\;x\leq 0 \\[0.2cm]
    x^2+\frac{1}{n}-\frac{1}{n^2} & \;\;\;\mbox{if}\; \frac{1}{n}<x\leq \frac{1}{n-1}, n=3,4,\cdot\cdot\cdot,\\[0.2cm]
    x^2+\frac{1}{4} & \mbox{if}\;x>\frac{1}{2}.
  \end{array}
  \right.
\]
In Example 3.19 of \cite{kru}, the authors have obtained that $F$ is lower semicontinuous and for each $x>0$
\[
  \partial F(x)=\left\{
  \begin{array}{cc}
    [2x, \infty) & \mbox{if}\;x= \frac{1}{n-1}, n=3,4,\cdot\cdot\cdot,\\[0.2cm]
    2x & \mbox{otherwise}.
  \end{array}
  \right.
\]
Let $\bar{x}=0$ be the reference point. Then we easily get that $F$ satisfies the level-set subdifferential EB condition at $\bar{x}$ with the exponent $\gamma=1$. However, $F$ does not have K{\L} property at $\bar{x}$ with any exponent $\alpha\in [0,1)$, since for $x_n=\frac{1}{n-1}$ one has
\[
\frac{d(0,\partial F(x_n))}{F(x_n)^{\alpha}}\leq \frac{\frac{2}{n-1}}{\frac{1}{n^{\alpha}}}=\frac{2n^{\alpha}}{n-1}\rightarrow 0\;\mbox{as}\; n\to \infty.
\]
Moreover, we can verify that $F$ is not semi-convex. Indeed, for any given $\frac{1}{n}<x< \frac{1}{n-1}$ with any positive integer $n\geq 3$ and any positive constant $\rho>0$, one easily obtains that the following inequality does not hold when $x'<x$ and $x'$ sufficiently closes to $x$:
\[
\frac{\rho+2}{2}x'^2\geq \frac{\rho+2}{2}x^2+\langle 2x, x'-x\rangle,
\]
which implies that $F$ is not semi-convex by Proposition 8.12 of \cite{Rockafellar}.
\end{example}
\begin{example} [level-set subdifferential EB does not imply the Bregman proximal EB] Let $\bar{x}=(0,0)^T$ and $F:\R^2\rightarrow \R$ be defined by
\[
  F(x):=\left\{
  \begin{array}{cc}
    x_1^2-x_2^3 & \mbox{if}\;x_2>0 \\[0.2cm]
    x_2^3 & \mbox{if}\; x_2\leq 0.
  \end{array}
  \right.
\]
From Example 1.1, we get that $F$ satisfies the level-set subdifferential EB condition at $\bar{x}$ with the exponent $\gamma=1$. Moreover, it is easy to verify that Assumption ($H$) holds. By the first-order optimality condition, we can easily calculate that $T_{D,1}(x)=\{(x_1,0)^T\}$ with $D(x,y)=\frac{1}{2}||x-y||^2$ for $x\not\in [F\leq 0]$ sufficiently closing to $\bar{x}$ and $x_1>2x_2>0$. However, $F$ does not have the Bregman proximal EB at $\bar{x}$, since for $x_n=(\frac{1}{n},\frac{1}{3n^2})$ which is not in $[F\leq 0]$ one has
\[
\frac{\sqrt{x_{n,1}^2+x_{n,2}^2}}{|x_{n,2}|}\rightarrow \infty\;\mbox{as}\; n\to \infty.
\]
\end{example}

Figure 1 summarizes the main results of this section.
\begin{figure}
\centering
\begin{center}
\scriptsize
		\tikzstyle{format}=[rectangle,draw,thin,fill=white]
		\tikzstyle{test}=[diamond,aspect=2,draw,thin]
		\tikzstyle{point}=[coordinate,on grid,]
\begin{tikzpicture}
[
>=latex,
node distance=5mm,
 ract/.style={draw=blue!50, fill=blue!5,rectangle,minimum size=6mm, very thick, font=\itshape, align=center},
 racc/.style={rectangle, align=center},
 ractm/.style={draw=red!100, fill=red!5,rectangle,minimum size=6mm, very thick, font=\itshape, align=center},
 cirl/.style={draw, fill=yellow!20,circle,   minimum size=6mm, very thick, font=\itshape, align=center},
 raco/.style={draw=green!500, fill=green!5,rectangle,rounded corners=2mm,  minimum size=6mm, very thick, font=\itshape, align=center},
 hv path/.style={to path={-| (\tikztotarget)}},
 vh path/.style={to path={|- (\tikztotarget)}},
 skip loop/.style={to path={-- ++(0,#1) -| (\tikztotarget)}},
 vskip loop/.style={to path={-- ++(#1,0) |- (\tikztotarget)}}]

        \node (a) [ractm, xshift=-20]{\baselineskip=3pt\small {\bf level-set subdifferential EB}\\ \baselineskip=3pt\footnotesize$dist^{\gamma}\left(x,[F\leq\bar{F}]\right)\leq c_1{dist}\left(0,\partial_PF(x)\right)$\\
        \baselineskip=3pt\footnotesize$\forall x\in\mathfrak{B}(\bar{x};\eta,\nu)$};
        \node (b) [ract, below = of a, xshift=40]{\baselineskip=3pt\small {\bf level-set Bregman EB}\\ \baselineskip=3pt\footnotesize$dist^p\left(x,[F\leq\bar{F}]\right)\leq\theta dist\left(x,T_{D,\epsilon}(x)\right)$\\
        \baselineskip=3pt\footnotesize$\forall x\in\mathfrak{B}(\bar{x};\eta,\nu)$};
        \node (b1) [ract, right = of b, xshift=20]{\baselineskip=3pt\small {\bf BP gap condition}\\
                          \baselineskip=3pt\footnotesize$G_{D,\epsilon}\left(x\right)\geq\mu\left(F(x)-\bar{F}\right)^{q}$\\
        \baselineskip=3pt\footnotesize$\forall x\in\mathfrak{B}(\bar{x};\eta,\nu)$};
        \node (bbb) [racc, below= of b, xshift=86, yshift=28]{\baselineskip=3pt\footnotesize$g$ is\\
                                                              \baselineskip=3pt\footnotesize semi-convex};
        \node (aa1) [cirl, above = of a,yshift=-8]{$+$};
        \node (aa2) [cirl, left = of a]{$+$};
        \node (aa2a) [racc, below= of aa2, yshift=16]{\baselineskip=3pt\footnotesize $g$ is\\
                                               \baselineskip=3pt\footnotesize semi-convex};
        \node (aba) [above = of aa2, yshift=4] {\baselineskip=3pt\footnotesize {\bf(H)}};
        \node (aba1) [racc, above= of aba, xshift=-15, yshift=-2]{\baselineskip=3pt\footnotesize $g$ is semi-convex};
        \node (aa3) [ract, left = of aa2]{\baselineskip=3pt\small {\bf Bregman proximal EB}\\
        \baselineskip=3pt\footnotesize$dist\left(x,\bar{\mathbf{X}}_P\right)\leq c_3dist\left(x,T_{D,\epsilon}(x)\right)$\\
        \baselineskip=3pt\footnotesize$\forall x\in\mathfrak{B}(\bar{x};\eta,\nu)$};
        \node (aa4) [ract, above = of aa1]{\baselineskip=3pt\small {\bf weak metric subregularity}\\ \baselineskip=3pt\footnotesize$dist\left(x,\bar{\mathbf{X}}_P\right)\leq c_2 dist\left(0,\partial_PF(x)\right)$\\
        \baselineskip=3pt\footnotesize\qquad\qquad\qquad\qquad$\forall x\in\mathfrak{B}(\bar{x};\eta,\nu)$};
        \node (aa5) [ract, left = of aa4, xshift=-35, yshift=15]{\baselineskip=3pt\small {\bf metric subregularity}\\ \baselineskip=3pt\footnotesize$dist\left(x,\bar{\mathbf{X}}_P\right)\leq c_2 dist\left(0,\partial_PF(x)\right)$\\
        \baselineskip=3pt\footnotesize\qquad\qquad\qquad\qquad$\forall x\in\mathbb{B}(\bar{x},\eta)$};
        \node (dd)[ract, below = of b,xshift=-142, yshift=-55]{\baselineskip=3pt\small $\{dist(x^k,[F\leq\bar{F}])\}$\\
                                       \baselineskip=3pt\small Q-linear};
        \node (dd1) [racc, below= of dd, xshift=-95, yshift=27]{\baselineskip=3pt\footnotesize $g$ is semi-convex};
        \node (ddd)[ract, right = of dd, xshift=-16]{\baselineskip=3pt\small $\{F(x^k)\}$ Q-linear\\
                                         \baselineskip=3pt\small $\{x^k\}$ R-linear};
        \node (dddd)[racc, below = of ddd]{\baselineskip=3pt\small Linear convergence for VBPG};
        \node (ddddd)[ract, left = of b, xshift=-40]{\baselineskip=3pt\small {\bf strong level-set subdifferential EB}\\ \baselineskip=3pt\footnotesize$dist\left(x,[F\leq\bar{F}]\right)\leq c_1'dist\left(x,\partial_PF(x)\right)$\\
        \baselineskip=3pt\footnotesize$\forall x\in[\bar{F}< F<\bar{F}+\nu]$};
        \node (dddddd)[ract, below = of ddddd, xshift=11]{\baselineskip=3pt\small {\bf strong level-set Bregman EB}\\ \baselineskip=3pt\footnotesize$dist\left(x,[F\leq\bar{F}]\right)\leq\theta'dist\left(x,T_{D,\epsilon}(x)\right)$\\
        \baselineskip=3pt\footnotesize$\forall x\in[\bar{F}< F<\bar{F}+\nu]$};
        \node (g) [ract, right = of a]{\baselineskip=3pt\small {\bf K{\L} property}\\ \baselineskip=3pt\footnotesize $dist\left(0,\partial_LF(x)\right)\geq c_5\left(F(x)-\bar{F}\right)^{\alpha}$\\
        \baselineskip=3pt\footnotesize$\forall x\in\mathfrak{B}(\bar{x};\eta,\nu)$};
        \node (hh) [racc, below = of b, xshift=185, yshift=65]{\baselineskip=3pt\footnotesize $g$ is semi-convex};
        \path 
              (b) edge[->] (b1)
              (g) edge[->] (a)
            (aa3) edge[->] (aa2)
            (aa2) edge[->] (a)
            (aa4) edge[->] (aa1)
            (aa1) edge[->] (a)
            (aba) edge[->] (aa2)
            (aba) edge[->] (aa1)
             (b1) edge[->] (6,-0.65);
        \path ( 1.5, -0.65) edge[->] ( 1.5, -1.2);
        \path (-2.4, -0.65) edge[->] (-2.4, -5);
        \path ( -5, -2.5) edge[->] (-5, -3.1);
        \path ( -5, -4.4) edge[->] (-5, -5);
        \path (aa5) edge[->] (-3.0,3.25);
        \path (-6.75,0.65) edge[->, vh path] (-3,2.3);
        \path (dd) edge[->, hv path] (-8.7,-2.5);
        \path (ddddd) edge[->, hv path] (-3,-0.7);
\end{tikzpicture}
\caption{The relationships among the notions of the level-set subdifferential EB, subregularity of subdifferential, Bregman proximal EB, K{\L} property, level-set Bregman EB and Bregman gap condition}\label{fig:1}
\end{center}
\end{figure}
\subsection{Applications of level-set subdifferential error bounds}
\noindent {\bf Application 1: Linear convergence of regularized Jaccobi method}

In big data applications, the regularizer $g$ in problem (P) may have block separable structures, i.e., $g(x)=\sum\limits_{i=1}^{N}g_i(x_i)$, $x_i\in\RR_n^i$. In this setting, (P) can be specified as
\begin{equation}\label{separable}
\min_{x\in\RR^n} f(x_1,...,x_n)+\sum_{i=1}^{N}g_i(x_i)
\end{equation}
If we take $K^k(x)=\sum\limits_{i=1}^{N}f\left(R_i^k(x)\right)+\frac{c_i}{2}\|x_i-x_i^k\|^2$ and $D^k(x,y)=K^k(y)-\left[K^k(x)+\langle\nabla K^k(x),y-x\rangle\right]$, where $R_i^k\triangleq(x_1^k,...,x_{i-1}^k,x_i,x_{i+1}^k,...,x_n^k)$. Thus VBPG become a regularized Jaccobi algorithm. Recently, G. Bajac \cite{Banjac18} provided the linear convergence of regularized Jaccobi algorithm under quadratic growth condition for full convex problem~\eqref{separable}.

By Theorem~\ref{theo1}, for full nonconvex problem~\eqref{separable}, the VBPG method provides the linear convergence under the level-set subdifferential EB condition at the point $\bar{x}\in\overline{\mathbf{X}}_L$. For the convex problem~\eqref{separable}, the quadratic growth condition is equivalent to strongly level set subdifferential EB condition see Theorem 3.3 and Corollary 3.6~\cite{Lewis2018} for more details. Together with Theorem~\ref{theo:n-s}, we can show that the quadratic growth condition is also necessary for linear convergence in the sense of~\eqref{x-Q-linear-2}.

\noindent{\bf Application 2: Linear convergence under proximal-PL inequality and Bregman proximal gap}

\cite{Schmidt2016} proposes the concept of proximal-PL inequality for solving problem (P) where $F$ is invex function, $g$ is convex, i.e., there is $\mu>0$ such that the following inequality holds:
$$\frac{1}{2}D_g(x,L)\geq\mu\left(F(x)-F^*\right).$$
where $F^*$ is the global minimum value  and
$$D_g(x,\alpha)=-2\alpha\min_{y\in\RR^n}\left[\langle\nabla f(x),y-x\rangle+\frac{\alpha}{2}\|y-x\|^2+g(y)-g(x)\right],$$
which is a global version of Bregman proximal gap function with $D^k(x,y)=\frac{\|x-y\|^2}{2}$. \cite{Schmidt2016} proves the sequence $\{F(x^k)\}$ generated by PG method with a step size of $1/L$ linearly converges to $F^*$ under proximal-PL inequality. For the fully nonconvex case, Theorem~\ref{theo1}  shows the Q-linear convergence of $\{F(x^k)\}$ and the R-linear convergence of $\{x^k\}$ under the Bregman proximal gap condition, which is weaker than the proximal-PL inequality. Observe that the proximal PL inequality implies that every critical point achieves an optimum $F^*$, and the strong level-set subdifferential EB condition holds. If $g$ is semi-convex, by Theorem~\ref{theo:n-s} the proximal PL inequality is also a necessary condition for linear convergence in the sense of~\eqref{x-Q-linear-2}.

\noindent {\bf Application 3: Linear convergence under K{\L} property}

Various variable metric proximal gradient methods (VMPG) are provided in following algorithms for problem (P)
$$x^{k+1}\rightarrow\min\limits_{x}\langle\nabla f(x^k),x-x^k\rangle+g(x)+\frac{1}{2}\|x-x^k\|_{B_k}^2,$$
where $B_k$ is positive definite matrix. E. Chonzennx et. al. \cite{Chouzenous2014} proposed an inexact version of VMPG algorithm for problem (P) where $g$ is convex. And the authors also provided linear convergence of VMPG under K{\L} property with exponent $\frac{1}{2}$. Noted that VMPG is the special case of VBPG with $D^k=\frac{\|x-y\|_{B_k}^2}{2}$, Theorem~\ref{theo1} states that VMPG has the linear convergence for $\{x^k\}$ and $\{F(x^k)\}$ under level-set subdifferential EB condition. Moreover, the strong level-set subdifferential error bound condition on $[\bar{F}<F<\bar{F}+\nu]$ is necessary and sufficient for linear convergence in the sense of~\eqref{x-Q-linear}. Mention that if $g$ is semi-convex, level-set subdifferential EB condition with exponent $\gamma=1$ is equivalent to K{\L} exponent $\frac{1}{2}$ condition.

\section{Sufficient conditions for the  level-set subdifferential EB condition to hold on $\mathfrak{B}(\bar{x};\eta,\nu)$ with $\bar{x}\in\bar{\mathbf{X}}_P$}\label{sec:LSEB_sufficiential}
This section provides sufficient conditions to guarantee level-set subdifferential EB condition
at $\bar{x}\in\bar{\mathbf{X}}_P$ on $\mathfrak{B}(\bar{x};\eta,\nu)$, where $\bar{x}$ is a proximal critical point of $F=f+g$\\

First, we provide some new notions
on relaxed strong convexity of function $f$ on $\mathbb{B}(\bar{x};\eta)$.
Given $z\in\mathbb{B}(\bar{x};\eta)$, for brevity, we denote
$Proj_{\mathbb{B}(\bar{x};\eta)\cap\bar{\mathbf{X}}_P}(z)$ by $\overline{z}_p$.
The following notations can be viewed as the local version for that in H. Karimi et al's and I. Necoara et al's paper~\cite{Schmidt2016},\cite{Necoara2018} respectively.
\begin{itemize}
\item[1.] Local strong-convexity (LSC) on $\mathbb{B}(\bar{x};\eta)$:
    $$f(y)\geq f(x)+\langle\nabla f(x),y-x\rangle+\frac{\mu}{2}\|y-x\|^2,\quad\forall x,y\in\mathbb{B}(\bar{x};\eta).$$
\item[2.] Local essentially-strong-convexity at $\bar{x}_p$ (LESC) on $\mathbb{B}(\bar{x};\eta)$:
    \begin{eqnarray*}
    f(y)\geq f(x)+\langle\nabla f(x),y-x\rangle+\frac{\mu}{2}\|y-x\|^2,
    \;\forall x,y\in\mathbb{B}(\bar{x};\eta)\;\mbox{with}\;\bar{x}_p=\overline{y}_p.
    \end{eqnarray*}
\item[3.] Local weak- strong-convexity at $\bar{x}_p$ (LWSC) on $\mathbb{B}(\bar{x};\eta)$:
    \begin{eqnarray*}
    f(\bar{x}_p)\geq f(x)+\langle\nabla f(x),\bar{x}_p-x\rangle+\frac{\mu}{2}\|\bar{x}_p-x\|^2,
    \quad\forall x\in\mathbb{B}(\bar{x};\eta).
    \end{eqnarray*}
\item[4.] Local quadratic-gradient-growth (LQGG) at $\bar{x}_p$ on $\mathbb{B}(\bar{x};\eta)$:
    \begin{eqnarray*}
    \langle\nabla f(x)-\nabla f(\bar{x}_p), x-\bar{x}_p\rangle\geq\mu\|\bar{x}_p-x\|^2,
    \quad\forall x\in\mathbb{B}(\bar{x};\eta).
    \end{eqnarray*}
\end{itemize}
For the case $g=0$, the following two notions are introduced.
\begin{itemize}
\item[5.] Local restricted secant inequality (LRSI):
\[
\langle\nabla f(x),x-\bar{x}_p\rangle\geq\mu\|x-\bar{x}_p\|^2,\quad\forall x\in\mathbb{B}(\bar{x};\eta).
\]
\item[6.] Local Polyak-{\L}ojasiewicz (LPL) inequality:
    \[
    \frac{1}{2}\|\nabla f(x)\|^2\geq\mu\left(f(x)-f(\bar{x})\right),\quad\forall x\in\mathbb{B}(\bar{x};\eta).
    \]
\end{itemize}
It's easy to show that the following implications hold for the function $f$ on $\mathbb{B}(\bar{x};\eta)$.
$$(LSC)\Rightarrow(LESC)\Rightarrow(LWSC).$$
For the case $g=0$, the LQGG  reduce to the local restricted secant inequality (LRSI). So  we have:
$$(LWSC)\Rightarrow(LRSI)\Rightarrow(LPL)\qquad\mbox{(if $g=0$)}.$$
Along with Assumption 3, the following proposition allow us to establish the level-set subdifferential EB of $F$, when $g$ is uniformly prox-regular.
\begin{proposition}[Sufficient conditions for weak metric subregularity]\label{prop:5.1}
Suppose $\bar{x}\in\bar{x}_P$, $g$ is uniformly prox-regular around $\bar x\in \mathbf{dom}~g$ with modulus $\rho$. If  one of the following conditions holds
\begin{itemize}
\item[{\rm(i)}] $f$ is local weak strongly convex (LWSC) at $\bar{x}_p$ with modulus $\mu$ and $\mu>\rho$ on $\mathbb{B}(\bar{x};\eta)$.
\item[{\rm(ii)}] $f$ satisfies local quadratic gradient growth condition (LQGG) at $\bar{x}_p$ with modulus $\mu$ and $\mu>\rho$ on $\mathbb{B}(\bar{x};\eta)$,
\end{itemize}
then $F$ satisfies the weak metric subregularity condition at $\bar{x}$.
\end{proposition}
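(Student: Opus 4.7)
The plan is to bound $dist(x,\bar{\mathbf{X}}_P)$ by a constant multiple of $dist(0,\partial_P F(x))$ on $\mathfrak{B}(\bar{x};\eta,\nu)$ by producing, for each such $x$ and each $\xi\in\partial_P F(x)$, a quadratic lower bound of the form $\langle\xi,x-x_p\rangle\geq c(\mu-\rho)\|x-x_p\|^2$, where $x_p:=\overline{x}_p=Proj_{\mathbb{B}(\bar{x};\eta)\cap\bar{\mathbf{X}}_P}(x)$. This projection is nonempty because $\bar{x}$ itself lies in the target set. Cauchy--Schwarz then yields $\|\xi\|\geq c(\mu-\rho)\|x-x_p\|\geq c(\mu-\rho)\,dist(x,\bar{\mathbf{X}}_P)$, and taking the infimum over $\xi$ will close the argument with $c_2=1/[c(\mu-\rho)]$.

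The common first step is a hypomonotonicity-type estimate for $\partial_P g$ coming from uniform prox-regularity of $g$ around $\bar{x}$. Since $x_p\in\bar{\mathbf{X}}_P$, first-order optimality supplies $\nu_p:=-\nabla f(x_p)\in\partial_P g(x_p)$. Writing $\xi=\nabla f(x)+\nu_x$ with $\nu_x\in\partial_P g(x)$ and applying the defining inequality of uniform prox-regularity twice, once at $(x,x_p,\nu_x)$ and once at $(x_p,x,\nu_p)$, the $g$-values cancel upon addition and leave
\[
\langle\nu_x+\nabla f(x_p),\,x-x_p\rangle\;\geq\;-\rho\,\|x-x_p\|^2.
\]
This is the term that ultimately ``pays'' the $-\rho$ in the final constant.

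In case (ii), LQGG directly supplies the companion estimate $\langle\nabla f(x)-\nabla f(x_p),x-x_p\rangle\geq\mu\|x-x_p\|^2$. Decomposing $\xi=(\nabla f(x)-\nabla f(x_p))+(\nabla f(x_p)+\nu_x)$ and adding the two lower bounds yields $\langle\xi,x-x_p\rangle\geq(\mu-\rho)\|x-x_p\|^2$ with $c=1$, finishing case (ii) with $c_2=1/(\mu-\rho)$. In case (i), rearranging LWSC gives $\langle\nabla f(x),x-x_p\rangle\geq f(x)-f(x_p)+\tfrac{\mu}{2}\|x-x_p\|^2$, and prox-regularity at $x$ with $\nu_x$ gives $\langle\nu_x,x-x_p\rangle\geq g(x)-g(x_p)-\tfrac{\rho}{2}\|x-x_p\|^2$; summing these produces
\[
\langle\xi,x-x_p\rangle\;\geq\;\bigl(F(x)-F(x_p)\bigr)+\tfrac{\mu-\rho}{2}\|x-x_p\|^2.
\]

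The main obstacle in case (i) is disposing of the value gap $F(x)-F(x_p)$. The plan is to observe that LWSC, being a one-sided quadratic growth of $f$ anchored at $x_p$, combines with the prox-regularity of $g$ at $x_p$ applied with $\nu_p=-\nabla f(x_p)$ to force $x_p$ to be a local minimizer of $F$ on $\mathbb{B}(\bar{x};\eta)$ whenever $\mu>\rho$; consequently $F(x)\geq F(x_p)$ and the case (ii) conclusion reappears with $c_2=2/(\mu-\rho)$. Establishing this local minimality cleanly from the asymmetric LWSC bound on $f$ and the prox-regularity bound on $g$ is the delicate crux of case (i); once it is in hand, the subgradient algebra goes through exactly as in case (ii).
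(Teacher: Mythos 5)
Your case (ii) matches the paper's argument exactly: hypomonotonicity of $\partial_P g$ from uniform prox-regularity, the decomposition $\xi=(\nabla f(x)-\nabla f(\bar{x}_p))+(\nu_x-\nu_p)$ with $\nu_p=-\nabla f(\bar{x}_p)\in\partial_P g(\bar{x}_p)$, and Cauchy--Schwarz. Your case (i) also reproduces the paper's key inequality
$\langle\xi,x-\bar{x}_p\rangle\geq\bigl(F(x)-F(\bar{x}_p)\bigr)+\tfrac{\mu-\rho}{2}\|x-\bar{x}_p\|^2$,
but then diverges, and that is where the gap lies.

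The step you flag as the ``delicate crux'' --- showing that $\bar{x}_p$ is a local minimizer of $F$ so that $F(x)\geq F(\bar{x}_p)$ --- is both unestablished and, as sketched, not establishable: LWSC gives an \emph{upper} bound on $f(x)$ in terms of $f(\bar{x}_p)$ (namely $f(x)\leq f(\bar{x}_p)+\langle\nabla f(x),x-\bar{x}_p\rangle-\tfrac{\mu}{2}\|x-\bar{x}_p\|^2$), while prox-regularity of $g$ at $\bar{x}_p$ gives a \emph{lower} bound on $g(x)$; these do not combine into a lower bound on $F(x)-F(\bar{x}_p)$, and bringing in the $L$-smoothness of $f$ only yields $F(x)\geq F(\bar{x}_p)-\tfrac{L+\rho}{2}\|x-\bar{x}_p\|^2$, which is useless for local minimality. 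The paper disposes of the value gap without any such detour: weak metric subregularity, by its definition in this paper, is only required to hold for $x\in\mathfrak{B}(\bar{x};\eta,\nu)$, i.e.\ for points with $F(x)>\bar{F}$, and the paper takes $F(\bar{x}_p)=F(\bar{x})=\bar{F}$ (the nearby critical points sharing the reference value, as under Assumption {\bf(H)}), so $F(x)-F(\bar{x}_p)>0$ and the term is simply dropped. Once you invoke the restricted test set $\mathfrak{B}(\bar{x};\eta,\nu)$ at this point, your case (i) closes with $c_2=2/(\mu-\rho)$ and no local-minimality claim is needed.
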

\begin{proof}
{\rm(i):} If $f$ is LWSC at $\bar{x}_p$ on $\mathbb{B}(\bar{x};\eta)$, then we have
\begin{equation}\label{eq:s-1}
f(\bar{x}_p)\geq f(x)+\langle\nabla f(x),\bar{x}_p-x\rangle+\frac{\mu}{2}\|\bar{x}_p-x\|^2.
\end{equation}
Since $g$ is uniformly prox-regular around $\bar x$ with $\rho$, then $\partial_Pg(x)=\partial_Lg(x)$ and
\begin{equation}\label{eq:s-2}
g(\bar{x}_p)\geq g(x)+\langle\xi,\bar{x}_p-x\rangle-\frac{\rho}{2}\|\bar{x}_p-x\|^2,\quad\forall \xi\in\partial_P g(x).
\end{equation}
Adding inequalities~\eqref{eq:s-1} and~\eqref{eq:s-2}, we obtain
$$F(\bar{x}_p)=F(\bar{x})=F_{\zeta}\geq F(x)+\langle\nabla f(x)+\xi,\bar{x}_p-x\rangle+\frac{(\mu-\rho )}{2}\|\bar{x}_p-x\|^2.$$
and
$$\langle\nabla f(x)+\xi,x-\bar{x}_p\rangle\geq\frac{(\mu-\rho )}{2}\|\bar{x}_p-x\|^2,\quad\forall\xi\in\partial_P g(x),\quad\forall x\in\mathfrak{B}(\bar{x};\eta,\nu).$$
Using Cauchy-Schwartz on above inequality, we conclude
$$dist(0,\partial_PF(x))\geq\frac{(\mu-\rho )}{2}\|\bar{x}_p-x\|\geq\frac{(\mu-\rho )}{2}dist(x,\bar{\mathbf{X}}_P),\quad\forall x\in\mathfrak{B}(\bar{x};\eta,\nu),$$
which yields the desired results.\\
{\rm(ii):} If $f$ is LQGG at $\bar{x}_p$ on $\mathbb{B}(\bar{x};\eta)$, then we have
$$\langle\nabla f(x)-\nabla f(\bar{x}_p),x-\bar{x}_p\rangle\geq\mu\|\bar{x}_p-x\|^2,\quad\forall x\in\mathfrak{B}(\bar{x};\eta,\nu).$$
Since $g$ is semi-convex, we have
$$\langle u-v,x-\bar{x}_p\rangle\geq-\rho\|x-\bar{x}_p\|^2,\quad\forall u\in\partial_Pg(x),\quad\forall v\in\partial_Pg(\bar{x}_p).$$
Adding the above two inequalities for $x\in\mathbb{B}(\bar{x};\eta)$, we obtain
$$\langle(\nabla f(x)+u)-(\nabla f(\bar{x}_p)+v),x-\bar{x}_p\rangle\geq\frac{\mu}{2}\|\bar{x}_p-x\|^2.$$
Since $\bar{x}_p$ is a proximal critical point,  $0=\nabla f(\bar{x}_p)+v$ for some $v\in\partial_P g(\bar{x}_p)$.
With this choice of $v$, the last above inequality yields
$$\langle\nabla f(x)+u,x-\bar{x}_p\rangle\geq\frac{(\mu-\rho )}{2}\|\bar{x}_p-x\|^2\geq\frac{(\mu-\rho )}{2}dist(x,\bar{\mathbf{X}}_P),\quad\forall u\in\partial_P g(x),\quad\forall x\in\mathfrak{B}(\bar{x};\eta,\nu).$$
This is enough for the proof of proposition.
\end{proof}
Now we are ready to present a main result on sufficient conditions to guarantee that
 the level-set subdifferential EB holds at $\bar{x}$ on $\mathfrak{B}(\bar{x},\eta,\nu)$, where $\bar{x}$ is an accumulation point of the sequence $\{x^k\}$ generated by VBPG.
\begin{theorem}[Sufficient conditions for the existence of a level-set subdifferential EB]
Consider problem (P). Suppose that Assumption~\ref{assump1} and Assumption~\ref{assump2} hold, and $\bar{x}\in\bar{\mathbf{X}}_P$. If one of following conditions hold, then $F$ satisfies the level-set subdifferential EB condition at $\bar{x}$ on $\mathfrak{B}(\bar{x};\eta,\nu)$.
\begin{itemize}
\item[{\rm(i)}] $F=f+g$ satisfies the K{\L} exponent at $\bar{x}$ on $\mathfrak{B}(\bar{x};\eta,\nu)$ at $\bar{x}$.
\item[{\rm(ii)}] $F=f+g$ satisfies Bregman proximal EB condition, Assumption {\bf(H)} holds, $g$ is semi-convex or $g$ is uniformly prox-regular around $\bar{x}$, $x\in\mathfrak{B}(\bar{x};\frac{\eta}{2},\frac{\nu}{N})$ with $N\geq\frac{2\overline{\epsilon}\nu}{m-\overline{\epsilon}L}/\left(\frac{\eta}{2}\right)^2$ satisfies Property (A).
\item[{\rm(iii)}] $F=f+g$ satisfies weak metric subregularity at $\bar{x}$ and Assumption {\bf(H)} holds.
\item[{\rm(iv)}] With $g=0$, $f=F$ satisfies the (LPL) inequality on $\mathbb{B}(\bar{x};\eta)$.
\end{itemize}
\end{theorem}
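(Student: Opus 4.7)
The plan is to handle the four sufficient conditions one by one, noting that three of them are near-immediate consequences of results already proved in the excerpt (mainly Theorem~\ref{theorem:subregularity}), while the uniformly prox-regular variant of (ii) requires revisiting one earlier proof. For case (i), the K{\L} property at $\bar{x}$ with exponent $\alpha\in[0,1)$ yields the level-set subdifferential EB with exponent $\gamma=\alpha/(1-\alpha)$ by part (a) of Theorem~\ref{theorem:subregularity}, possibly after shrinking $\eta$ to $\eta/2$. For case (iii), part (b)(ii) of the same theorem directly produces the level-set subdifferential EB with $\gamma=1$ under Assumption~(H).

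Case (ii) under semi-convexity of $g$ is handled verbatim by part (b)(i) of Theorem~\ref{theorem:subregularity}: Assumption~(H) gives $dist(x,[F\leq\bar{F}])\leq dist(x,\bar{\mathbf{X}}_P)$; the Bregman proximal EB bounds the right side by $c_3\,dist(x,T_{D,\epsilon}(x))$; and statement~(iv) of Proposition~\ref{prop:Gk} converts this further into a multiple of $dist(0,\partial_PF(x))$. The real work in this case is therefore the uniformly prox-regular sub-case, since Propositions~\ref{prop:singlevalue} and~\ref{prop:Gk} were derived under semi-convexity. The main obstacle of the whole theorem lies here: one has to re-examine those proofs and argue that the quadratic lower bound
\[
g(x')\geq g(x)+\langle\nu,x'-x\rangle-\tfrac{\rho}{2}\|x'-x\|^{2}
\]
supplied by uniform prox-regularity plays the same role as semi-convexity, after $\eta$ is shrunk so that both $x$ and $T_{D,\epsilon}(x)$ remain inside the neighborhood on which prox-regularity is valid and on which $\partial_Pg=\partial_Lg$. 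Granted this, the estimate $\epsilon G_{D,\epsilon}(x)\leq\tfrac{\overline{\epsilon}}{2(m-\overline{\epsilon}\rho)}\|\nabla f(x)+\nu\|^{2}$ transfers line-by-line, the single-valuedness of $T_{D,\epsilon}$ survives, and the whole chain of inequalities that proves part (b)(i) of Theorem~\ref{theorem:subregularity} goes through with $\gamma=1$.

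For case (iv), the setting $g=0$ forces $F=f$ and $\partial_PF(x)=\{\nabla f(x)\}$, so the local Polyak-{\L}ojasiewicz inequality $\tfrac12\|\nabla f(x)\|^{2}\geq\mu(f(x)-f(\bar{x}))$ on $\mathbb{B}(\bar{x};\eta)$ is literally the K{\L} property at $\bar{x}$ with exponent $\alpha=\tfrac12$, once one restricts to the slab $\bar{F}<F<\bar{F}+\nu$. Thus case (iv) reduces to case (i) and delivers the level-set subdifferential EB with $\gamma=1$ on $\mathfrak{B}(\bar{x};\eta/2,\nu)$. In summary, all four cases are obtained by short reductions to Theorem~\ref{theorem:subregularity}; the only non-routine step is the prox-regular extension of Proposition~\ref{prop:Gk}(iv) needed inside case~(ii).
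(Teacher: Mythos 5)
Your proposal is correct and follows essentially the same route as the paper, whose entire proof consists of citing Theorem~\ref{theorem:subregularity} for cases (i)--(iii) and reducing (iv) to the K{\L} case via the observation that the LPL inequality with $g=0$ is exactly the K{\L} property with exponent $\tfrac12$. You are in fact more careful than the paper on the uniformly prox-regular sub-case of (ii): the paper's one-line proof silently subsumes it under the semi-convex hypothesis of Theorem~\ref{theorem:subregularity}(b)(i), whereas you correctly note that Proposition~\ref{prop:Gk}(iv) must be re-derived using the quadratic lower bound from uniform prox-regularity on a suitably shrunken neighborhood.
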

\begin{proof}
(i)-(iii) see Theorem~\ref{theorem:subregularity}. (iv) For this case, the (LPL) inequality implies the K{\L} property. Then the assertion follows from Proposition~\ref{theorem:subregularity}.
\end{proof}
\begin{remark}
 For the optimization problem (P),
if we consider the global solution $\mathbf{X}^*$ instead of  $\bar{\mathbf{X}}_P$, then Assumption (H) is automaticcally satisfied.  Weak metric subregularity and the Bregman proximal EB imply the level-set subdifferential EB.
\end{remark}
\begin{remark}
From the definition of a level-set subdifferential EB, suppose that $\bar{x}$ is a critical point. If $x\in\mathfrak{B}(\bar{x};\eta,\nu)$ is also a critical point, then $0\in\partial_PF(x)$ and $dist\left(x,[F\leq\bar{F}]\right)=0$. This fact follows $F(x)\leq F(\bar{x})$, which implies Assumption (H) is a necessary condition for a level-set subdifferential EB to hold. We mention that Assumption (H) is also necessary for K{\L} property.
\end{remark}

\begin{figure}
\begin{center}
\begin{tikzpicture}
[
>=latex,
node distance=3mm,
 ract/.style={draw=blue!50, fill=blue!5,rectangle,minimum size=6mm, very thick, font=\itshape, align=center},
 racc/.style={rectangle, align=center},
 ractm/.style={draw=red!100, fill=red!5,rectangle,minimum size=6mm, very thick, font=\itshape, align=center},
 cirl/.style={draw, fill=yellow!20,circle,   minimum size=6mm, very thick, font=\itshape, align=center},
 raco/.style={draw=green!500, fill=green!5,rectangle,rounded corners=2mm,  minimum size=6mm, very thick, font=\itshape, align=center},
 hv path/.style={to path={-| (\tikztotarget)}},
 vh path/.style={to path={|- (\tikztotarget)}},
 skip loop/.style={to path={-- ++(0,#1) -| (\tikztotarget)}},
 vskip loop/.style={to path={-- ++(#1,0) |- (\tikztotarget)}}]

        \node (a) [ractm]{\baselineskip=3pt\footnotesize level-set subdifferential error bound\\ \baselineskip=3pt\footnotesize$dist^{\gamma}\left(x,[F\leq\bar{F}]\right)\leq c_1{dist}\left(0,\partial_PF(x)\right)$};
        \node (b) [ract, below = of a, xshift=-50,yshift=-20]{\baselineskip=3pt\small level-set Bregman error bound\\ \baselineskip=3pt\footnotesize$dist^{p}\left(x,[F\leq\bar{F}]\right)\leq\theta dist\left(x,T_{D,\epsilon}(x)\right)$};
        \node (bb) [ract, right = of b, xshift=30]{\baselineskip=3pt\small BP gap condition\\ \baselineskip=3pt\footnotesize$G_{D,\epsilon}(x)\geq\mu\left(F(x)-\bar{F}]\right)^q$};
        \node (bbb) [racc, below= of b, xshift=90, yshift=30]{\baselineskip=3pt\footnotesize$g$ is\\
                                                              \baselineskip=3pt\footnotesize semi-convex};
        \node (bb1) [racc, above= of bb, xshift=-34,yshift=-10]{\baselineskip=0.1pt\footnotesize$g$ is\\
                                                     \baselineskip=0.1pt\footnotesize semi-convex};
        \node (d) [ract, right = of a]{\baselineskip=3pt\footnotesize K{\L}\\ \baselineskip=3pt\footnotesize exponent};
        \node (ddd) [above = of d, yshift=-10]{$F=f+g$};
        \node (e) [ract, right = of d]{\baselineskip=3pt\footnotesize LPL};
        \node (f) [ract, right = of e]{\baselineskip=3pt\footnotesize LRSI};
        \node (g) [ract, right = of f]{\baselineskip=3pt\footnotesize LWSC};
        \node (gg) [above = of g, xshift=-30, yshift=-5]{$F=f$};
        \node (h) [ract, below = of g]{\baselineskip=3pt\footnotesize LESC};
        \node (i) [ract, left = of h]{\baselineskip=3pt\footnotesize LSC};
        \node (j) [ract, above = of a, xshift=-14, yshift=45]{\baselineskip=3pt\footnotesize LWSC};
        \node (jj) [ract, below = of j, xshift=14, yshift=4]{\baselineskip=3pt\footnotesize weak metric subregularity};
        \node (aa) [cirl, below = of j, yshift=-19]{$+$};
        \node (aba) [left = of aa, xshift=-48] {\baselineskip=3pt\footnotesize {\bf(H)}};
        \node (aba1) [racc, below= of aba, xshift=15, yshift=14]{\baselineskip=3pt\footnotesize$g$ is semi-convex};
        \node (k) [ract, left = of j]{\baselineskip=3pt\footnotesize LESC};
        \node (kk) [above= of k, xshift=48, yshift=-8]{\baselineskip=3pt\footnotesize $F=f+g$, $g$ is uniformly proximal regular};
        \node (l) [ract, left = of k]{\baselineskip=3pt\footnotesize LSC};
        \node (ll) [racc, left= of l, xshift=-10]{\baselineskip=3pt\footnotesize $f$ is};
        \node (m) [ract, right = of j, xshift=20,yshift=2]{\baselineskip=3pt\footnotesize $f$ is LQGG};
        \node (n) [cirl, left = of a]{$+$};
        \node (o) [ract, left  = of n]{\baselineskip=3pt\footnotesize Bregman proximal\\
                                       \baselineskip=3pt\footnotesize error bound};
        \node (o1) [ract, below = of o]{\baselineskip=3pt\footnotesize Luo-Tseng\\
                                        \baselineskip=3pt\footnotesize error bound};
        \node (o1o) [racc, above= of o1, xshift=-32, yshift=-12]{\baselineskip=3pt\footnotesize $g$ is semiconvex};
        \node (oo) [above = of o, xshift=-22, yshift=-10]{\baselineskip=3pt\footnotesize $F=f+g$};
        \path (-2.35,-0.6) edge[->] (-2.35,-1.6)
              (3.55,-1.6) edge[->] (d)
              (b) edge[->] (bb)
              (aba) edge[->] (aa)
              (aa) edge[->] (-0.5,0.55)
              (j) edge[->] (-0.5,2.25)
              (-0.5,1.6) edge[->] (aa)
              (d) edge[->] (a)
              (e) edge[->] (d)
              (f) edge[->] (e)
              (g) edge[->] (f)
              (h) edge[->] (g)
              (i) edge[->] (h)
              (k) edge[->] (j)
              (l) edge[->] (k)
              (n) edge[->] (a)
              (aba) edge[->] (n)
              (o) edge[->] (n)
              (o1) edge[->] (o);
        \path (1.63,2.6) edge[->] (1.63,2.3)
              (ll) edge[->] (l);

        \draw[dotted,very thick] (4.45,-1.35)--(8.4,-1.35);
        \draw[dotted,very thick] (4.45,0.45)--(8.4,0.45);
        \draw[dotted,very thick] (4.45,-1.35)--(4.45,0.45);
        \draw[dotted,very thick] (8.4,0.45)--(8.4,-1.35);

        \draw[dotted,very thick] (0.25,2.4)--(-4,2.4);
        \draw[dotted,very thick] (0.25,3.2)--(-4,3.2);
        \draw[dotted,very thick] (0.25,2.4)--(0.25,3.2);
        \draw[dotted,very thick] (-4,2.4)--(-4,3.2);
\end{tikzpicture}
\caption{Sufficient conditions for the level-set subdifferential error bound}\label{fig:2}
\end{center}
\end{figure}

\end{document}